\def\argmin{\mathop{\rm arg\, min}}
\def\F{{\mathcal F}}
\def\bbF{{\mathbb F}}
\def\P{{\cal P}}
\def\R{{\mathbb R}}
\def\X{{\mathcal X}}
\def\Y{{\mathcal Y}}
\def\PROB{{\mathbb P}}
\def\EXP{{\mathbb E}}
\def\IND{{\mathbb I}}
\newtheorem{assumption}[theorem]{Assumption}
\newtheorem{remark}[theorem]{Remark}
\newtheorem{example}[theorem]{Example}
\newcommand{\TheTitle}{Causal transport in discrete time and applications} 
\newcommand{\TheAuthors}{J. Backhoff, M. Beiglb\"ock, Y. Lin and A. Zalashko }
\headers{\TheTitle}{\TheAuthors}
\numberwithin{theorem}{section}
\numberwithin{equation}{section}
\title{ {\TheTitle} 
\thanks{ Submitted to the editors DATE.\funding{The first two authors acknowledge  support by the Austrian Science Fund (FWF) under grants P26736 and Y782-N25  as well as the European Research Council (ERC) under grant FA506041. The third author acknowledges  support by the ERC under Advanced grant 321111. The fourth author acknowledges  support by Doktoratskolleg W1245 of the Austrian Science Fund (FWF).}} }
\author{
Julio Backhoff \thanks{Vienna University of Technology, Vienna, Austria (\email{julio.backhoff@tuwien.ac.at}, +435880110575).} 
\and     Mathias Beiglb\"ock \thanks{Vienna University of Technology, Vienna, Austria (\email{mathias.beiglboeck@tuwien.ac.at}, +435880110570).}  
\and Yiqing Lin \thanks{\'Ecole Polytechnique, Paris, France (\email{yiqing.lin@polytechnique.edu}, +33169334600).}
\and Anastasiia Zalashko \thanks{University of Vienna, Vienna, Austria (\email{anastasiia.zalashko@univie.ac.at}, +431427750722).}
 }
\begin{document}

\maketitle
\begin{abstract}
Loosely speaking, causal transport plans are a relaxation of adapted
processes in the same sense as Kantorovich transport plans extend
Monge-type transport maps. The corresponding causal version of the
transport problem has recently been introduced by Lassalle. Working in
a discrete time setup, we establish a dynamic programming principle that links the causal transport problem to the transport problem
for general costs recently considered by Gozlan et al. Based on this recursive principle, we give conditions under which the celebrated Knothe-Rosenblatt rearrangement can be
viewed as a causal analogue to the Brenier map.
Moreover, these considerations provide transport-information inequalities for the nested distance between stochastic processes pioneered by Pflug and Pichler, and so serve to gauge the discrepancy between stochastic programs driven by different noise distributions.  
\end{abstract}

\begin{keywords}
Optimal transport, causality, nested distance, general transport costs, Knothe-Rosenblatt rearrangement, transport inequalities.
\end{keywords}

\begin{AMS} 90C15,60G70,39B62\end{AMS}

\section{Introduction}
In this article we consider the optimal transport problem between two discrete-time stochastic processes under the so-called causality constraint, highlighted recently by the work of Lassalle in \cite{Lassalle2} in a more general setting. A transport plan between two processes is said to be \textit{causal} if, from an observed trajectory of the first process, the ``mass'' can be split at each moment of time into the second process only based on the information available up to that time.  It is illustrative to think of the deterministic case (i.e.\ when there is no splitting of mass); such a causal plan is then an actual mapping which is further adapted, and so the relationship between causal plans and adapted processes is the same as between classical transport plans (Kantorovich) and transport maps (Monge). 

The idea of imposing a ``causality'' constraint on a transport plan between laws of processes 
 seems to go back to the Yamada-Watanabe criterion for stochastic differential equations \cite{YW}. Under the name ``compatibility'' the same type of constraint was introduced by Kurtz \cite{Kurtz}. 
In this article we will also link these objects to the notion of \emph{nested distance}, whose systematic investigation was initiated by Pflug \cite{Pflug} and Pflug--Pichler \cite{PflugPichler, PflugPichlerbook,PflugPichlergeneration}, and had a precursor in the ``Markov-constructions''  studied by R\"{u}schendorf \cite{Rueschendorf}. Roughly, the nested distance is defined through a problem of optimal transport over plans which are \emph{bicausal}, this notion being the symmetrized analogue of causality. Interestingly, \cite{Rueschendorf} and \cite{PflugPichlerbook} established a recursive formulation for the problem, and \cite{PflugPichler,PflugPichlerbook} further obtained a dual formulation for the nested distance. Moreover, Pflug--Pichler \cite{PflugPichler} applied these considerations to the practical problem of reducing the complexity of multistage stochastic programs, by showing that the difference between the optimal value of a program w.r.t.\ two different noise distributions is dominated by the nested distance between them. We refer to the books \cite{ruszczynski2003stochastic,ShapiroBook,PflugPichlerbook} for a detailed account on stochastic programming.

A systematic treatment and use of  causality as an interesting property of abstract transport plans and their associated optimal transport problems was first made by  Lassalle in \cite{Lassalle} in the general context of Polish spaces (then updated in \cite{Lassalle2}). As an application the author considers the Wiener space setting of the problem and establishes that weak solutions to Brownian-motion-driven stochastic differential equation can be conceived as causal transport plans between the Wiener measure and a target measure, and  finds that such plans are automatically bicausal and optimal for a Cameron-Martin-type cost. He then explores functional inequalities in Wiener space (first obtained by \cite{FeyelUstunel}) by means of this method. We stress that the main motivation for our article comes from this connection with stochastic analysis, and our goal is to deepen the understanding of the causal transport problem by looking at the discrete-time setup; the continuous-time counterpart/extension of our results is a work in progress. This motivation implies that for some results we are content with assuming independence of marginals or of increments for some of the processes we look at. It also means that we often seek to show the robustness of some of the particular phenomena obtained by Lassalle: for instance by studying when the causal and bicausal problems coincide/differ, or by showing that functional inequalities are also prevalent in our setting.

The core subject of our article is the causal optimal transport problem, which consists in finding the cheapest causal transport plan from a given source measure (process) to a target one, with respect to a certain cost function on the product space. Since causality can be easily characterized as a linear constraint on transports, we can embed this problem in the class of optimal transport problems under (infinitely many) linear constraints, as considered by Zaev \cite{Zaev} and \cite{BGriessler}; this line of reasoning has  already been applied in the literature, for example in the development of martingale optimal transport (see \cite{HobsonNeuberger,BHP,GHT,DolinskySoner}). In this way, we obtain conditions for the existence of an optimal causal transport and identify a dual formulation for the problem, further establishing no-duality-gap; this is the content of our Theorem \ref{teo:duality}. By studying the conditional distributions of causal transports, we are able to tackle many instances of the causal optimal transport problem by means of a recursion, which we call the dynamic programming principle (DPP in short); see Theorem \ref{thm:DPPcausal}. The appeal of these recursions is that instead of one ``multi-dimensional'' transport problem over causal plans, we obtain recursively several ``one-dimensional'' problems, each one of them a ``general'' (i.e.\ non-linear) transport problem as introduced recently in \cite{weaktransport,weaktransport2}. In this way, we reduce the dimensionality of the problem at the expense of introducing non-linearities.

In Theorem \ref{main theorem} we establish that the Knothe-Rosenblatt rearrangement \cite{knotheConvexBody} (also known as multidimensional quantile transform) 
is causal optimal for the squared euclidean distance (or more generally for convex and separable {cost functions}, in the sense of \eqref{separable cost} below) 
if the source measure is of product type. This  setting is relevant e.g.\ in the theory of functional equalities, and such result can be extended to the case when the source measure has independent increments and the cost is suitably modified (see Corollary \ref{coro KR}) which is a set-up relevant to stochastic analysis. The key here is to first identify the mentioned rearrangement as a bicausal optimizer, which we do in \mbox{Proposition \ref{main result bc}} generalizing a corresponding result in \cite{Rueschendorf}, and then to prove that under the given assumptions the values of the causal and bicausal problems coincide.
 If further the source measure has absolutely continuous marginals, then the Knothe-Rosenblatt rearrangement is of Monge-type. Hence the Knothe-Rosenblatt rearrangement can be viewed as a causal version of the classical Brenier map. In our opinion this result adds to the appeal of this rearrangement, which is in any way widely used in analysis, statistics, and operations research in the context of scenario generation.

We also further the understanding of the relation between nested distance and  multistage stochastic programming established in 
 \cite{Pflug, PflugPichler, PflugPichlerbook}.
First we show that many stochastic programs are concave/convex along what we define as lexicographic-displacement interpolations, in analogy to the concepts of displacement interpolation and displacement convex functionals in classical optimal transport theory (see \cite[chapter 5]{Villani} or McCann \cite{McCann_displacement}), where the role of Brenier's map is taken by the Knothe-Rosenblatt rearrangement. Moreover, we give conditions under which the nested distance of ``order one'', which gauges the discrepancy between stochastic programs driven by different noise distributions and a common Lipschitz-cost criterion, can itself be assessed by the square root of the relative entropy between such processes. In other words, we establish a transport-information inequality for this nested distance of ``order one''. This means that the discrepancy between such stochastic programs can be simply gauged by an entropy, which is easier to compute in practice than the nested distance itself. We shall also have occasion to further highlight the connection between causality and functional inequalities when, in Section \ref{sec Talagrand}, we establish Talagrand's celebrated $\mathcal{T}_2$ inequality (see \cite{Talagrand}) for the standard Gaussian measure by interpreting the author's ``tensorization/inductive trick'' as an instance of our recursions; we refer the reader to \cite{LedouxBook,GozlanLeonardSurvey} for an account on functional/geometric inequalities and the related concept of concentration of measure.

The article is organized as follows. In Section \ref{Main results} we introduce the setting and collect our main results; Theorem \ref{teo:duality} on the attainability and duality for the causal transport problem (established in Section \ref{Duality}), Theorem \ref{thm:DPPcausal} on the recursive formulation of the problem (what we call the DPP, whose proof is given in Section \ref{DPP causal case}), Theorem \ref{main theorem} on the identification of the  Knothe-Rosenblatt rearrangement as a causal optimizer (established in Section \ref{Bicausal case}), and finally Theorem \ref{thm: funct nested} on the bicausal transport information inequality which is further explored in Section \ref{lex_section} along with other connections to stochastic programming. In Section \ref{Sec Examples} we present some counterexamples cited throughout the paper.\\

\textbf{Notation:} For a product of sets $\X \times \Y$ we denote by $p^1,p^2$ the projection onto the first resp.\ second coordinate. The pushforward of a measure $\gamma$ by a map $M$ is denoted $M_*\gamma$. We denote by $\gamma^x,\gamma^y$ the regular kernels of a measure $\gamma$ on $\X \times \Y$ w.r.t.\ its first and second coordinate respectively. {Thus $\int f(y)\gamma^x(dy)$ gives a version of the conditional expectation of $f(y)$ given $x$ under measure $\gamma$, sometimes also denoted $E^\gamma[f(Y)|X=x]$ in the literature, and so forth}. 
 Analogous notation extends to products of more than two spaces. On $\R^N\times \R^N$ we denote by $(x_1,\dots,x_N)$ the first half and $(y_1,\dots,y_N)$ the second half of the coordinates, and we convene that for $\gamma$ a probability in $\R^N\times\R^N$ (respect.\ $\eta$ on $\R^N$), $\gamma^{x_1,\dots,x_t,y_1,\dots,y_t}$ (respect.\ $\eta^{x_1,\dots,x_t}$) denotes the two-dimensional measure on $(x_{t+1},y_{t+1})$ (respect.\ one-dimensional measure on $x_{t+1}$) given by regular disintegration of $\gamma$ w.r.t.\ $(x_1,\dots,x_t,y_1,\dots,y_t)$ (respect.\ $\eta$ w.r.t.\ $(x_1,\dots,x_t)$). Also, a statement like ``for $\gamma$-a.e.\ $x_1,\dots,x_t,y_1,\dots,y_t$'' or ``for $\eta$-a.e.\ $x_1,\dots,x_t$'' is meant to denote respectively ``almost-everywhere'' with respect to the projections of $\gamma$ onto $x_1,\dots,x_t,y_1,\dots,y_t$ or $\eta$ onto $x_1,\dots,x_t$. {Throughout $C_b(\X)$ stands for the space of continuous, real-valued, bounded functions on $\X$. If $f$ and $g$ are real-valued functions on $\X$ resp.\ $\Y$, we denote $f\oplus g(x,y):=f(x)+g(y)$.}

\section{Main results}\label{Main results}
Let $\X$ and $\Y$ be closed subsets of $\R^N$ and take $\F^{\X}$ and $\F^{\Y}$ the filtrations generated by the coordinate processes (i.e.\ $\F^{X}_t$ is the smallest $\sigma$-algebra s.t.\ $x\in \X\mapsto (x_1,\dots,x_t)\in \R^t$ is measurable, and so forth). The probability measures on the product space $\X \times \Y$ with marginals $\mu, \nu$ correspond to all possible transport plans between the given marginals. Denote this set $$\textstyle\Pi (\mu, \nu) = \left\lbrace \gamma \in \mathcal{P}( \X \times \Y)  \text{ with marginals } \mu \text{ and }\nu \right\rbrace .$$ We will often consider pairs of random variables $(X,Y)$ defined on some probability space $\left( \Omega, \PROB\right) $ and taking values in resp.\ $\X$ and $\Y$, and refer to them as transport plans as well. Any property on $(X,Y)$ should then be understood as a property on $(X,Y)_*\PROB$. 

In the following we assume w.l.o.g.\ that $\X=supp(\mu)$ and $\Y=supp(\nu)$, whenever dealing with transport problems between $\mu$ and $\nu$. With some abuse of notation we will often write $\R^N$, the reader keeping in mind we mean $\X$ or $\Y$. 
{
\begin{remark}Throughout this work most of the results would still hold for $S$-valued discrete-time  stochastic processes in $N$-steps, with $S$ a Polish space. That is, we could take $\X=\Y=S^N$. Moreover, we could also take products of different Polish spaces. These can be interpreted as the set of trajectories of discrete time stochastic process taking values in rather arbitrary spaces, for instance $\mathbb{R}^d$. It is mostly for the sake of familiarity that we shall take $S=\R$ throughout the whole article. 
\end{remark}
}  For simplicity, for us being measurable with respect to a sigma algebra means to be equal to a correspondingly measurable function modulo a null set w.r.t.\ the measure unequivocally relevant to the given context.

\begin{definition}\label{def:causality}
A transport plan $\gamma \in \Pi(\mu,\nu)$ is called\footnote{Lassalle  (\cite{Lassalle2}) introduced more technical definitions. The one here is enough for us. {The concept of ``causality'' need not be taken literally here (for instance the independent coupling being causal). Perhaps it helps to think of it as a generalization of adaptedness (i.e.\ non-anticipativity).}}
 causal (between $\mu$ and $\nu$) if for any \linebreak \mbox{ $t\in \{1,\dots,N\}$} and $ B \in {\F^{\Y}_t}$, 
the mapping $x \in \X \to \gamma^x(B)$ is ${\F^{\X}_t}$-measurable.
The set of all such plans will be denoted $$\textstyle{\Pi_{c}(\mu, \nu)}.$$ 
\end{definition}
Analogously, we will be interested in transport plans that are ``causal in both directions'', or bicausal in our terminology. The set of all such plans is explicitly given by  $$\textstyle\Pi_{bc}(\mu, \nu) = \left\lbrace \gamma \in \Pi_c(\mu, \nu) \text{ s.t. } e_*\gamma \in \Pi_c(\nu, \mu)\right\rbrace ,$$
where $e(x,y) = (y,x)$.
As in the usual optimal transport problem, the set of all causal plans $\Pi_c(\mu,\nu)$, as well as $\Pi_{bc}(\mu, \nu)$, are always non-empty because $\textstyle\mu \otimes \nu\in\Pi_{bc}(\mu, \nu)$. Further, as in the classical setting, we shall consider the case of Borel measurable transformations $T:\X\to \Y$ satisfying $T_*\mu=\nu$, so that in particular $\gamma^T:= (id\times T)_*\mu$ belongs to $\Pi (\mu, \nu)$, and call them (Monge) transport maps. Transport maps are termed (bi)causal if the associated $\gamma^T$ is so.\\

The condition for a transport plan to be causal, written in terms of stochastic processes, looks as follows: for all $t={1,\dots, N}$ and $\textstyle B_t \in \F_t^{\Y}$,
\begin{align*}\textstyle
 \PROB \big( (Y_1,  \dots, Y_t)\in B_{t} \mid X_1,  \dots, X_{N} \big) &= \PROB \big( (Y_1,  \dots, Y_t)\in B_{t} \mid X_1, \dots X_t\big).
\end{align*}
Heuristically this reads as ``given the past of $X$, the past of $Y$ and the future of $X$ are independent''. This is perhaps best interpreted by the following equivalent formulation (see e.g.\ \cite[Proposition 6.13]{Kallbook}):
$$\textstyle Y_t=F_t(X_1,\dots,X_t,U_t)\,\,\,\,\,\,\forall t \in \{1,\dots,N\},$$
%
for some measurable functions $F_t$ and where each $U_t$ is a uniform random variable independent of $X_1,\dots,X_N${; however, the $U_t$'s need not be independent of each other}.
\begin{remark}
Clearly a transport map $T$ is causal if and only if it is adapted, in the sense that there exist Borel-measurable $T^t:\R^t\to\R$ such that for $\mu$-a.e.\ $(x_1,\dots,x_N)$: \vspace{-6pt}
$$\textstyle T(x_1,\dots,x_N)\,\,=\,\, (T^1(x_1),T^2(x_1,x_2),\dots,T^N(x_1,\dots,x_N)).$$
\end{remark}

\vspace{-6pt}The following proposition allows us on the one hand to characterize the causal transport plans using the successive  disintegrations of measures on a product space. On the other hand, it shows that causality can be seen as a linear constraint on measures on the product space, stated in terms of a special class of test functions or via discrete stochastic integrals.
\begin{proposition}\label{prop:equiv charact}
The following statements are equivalent:
\begin{enumerate}
\item  $\gamma$ is a causal transport plan on $\X \times\Y$ between the measures $\mu$ and $\nu$.
\item Decomposing $\gamma$ in terms of successive regular kernels
\vspace{-6pt}\begin{equation}\label{successivedisintegration}\textstyle
\resizebox{.81\hsize}{!}{$
\gamma(dx_1,\dots,dx_N,dy_1,\dots,dy_N)=\bar{\gamma}(dx_1,dy_1)\gamma^{x_1,y_1}(dx_2,dy_2)\dots \gamma^{x_1,\dots,x_{N-1},y_1,\dots,y_{N-1}}(dx_N,dy_N), $}
\end{equation}
then $\textstyle\bar{\gamma}\in \Pi(p^1_*\mu,p^1_*\nu)$ and for $t<N$ and $\gamma$-almost all $x_1,\dots,x_{t},y_1,\dots,y_{t}$  
\vspace{-6pt}\begin{equation}\label{eq: marg mu}\textstyle
p^1_*\gamma^{x_1,\dots,x_{t},y_1,\dots,y_{t}}{(dx_{t+1})}\,\,\, \,\,=\,\,\,\,\, \mu^{x_1,\dots,x_{t}}{(dx_{t+1})},
\end{equation}
and for $\nu$-almost all $y_1,\dots,y_{t}$
\setlength{\belowdisplayskip}{0pt} \setlength{\belowdisplayshortskip}{0pt}
\setlength{\abovedisplayskip}{0pt} \setlength{\abovedisplayshortskip}{0pt}
\begin{equation}\label{eq: marg nu}\textstyle
\gamma^{y_1,\dots,y_{t}}(dy_{t+1})=\nu^{y_1,\dots,y_{t}}(dy_{t+1}).
\end{equation}
\item $\gamma \in \Pi (\mu, \nu)$ and for all $ t \in\{ 1, \dots, N\}$, $h_t \in C_b(\R^t)$ and $g_t\in C_b(\R^N)$ we have 
%
\setlength{\belowdisplayskip}{0pt} \setlength{\belowdisplayshortskip}{0pt}
\setlength{\abovedisplayskip}{0pt} \setlength{\abovedisplayshortskip}{0pt}
\begin{multline}\label{eq:test caus}\textstyle
\int h_t(y_1,\dots,y_t)\left\{ \,\,g_t(x_1,\dots,x_{N})\,\, -\right.  \\\textstyle \int  \left. g_t(x_1,\dots,x_t,{x}_{t+1},\dots,{x}_N)\mu^{x_1,\dots,x_t}(d{x}_{t+1},\dots,d{x}_{N})\,\, \right\}d\gamma =0. 
\end{multline}

%
%
\item $\gamma \in \Pi (\mu, \nu)$ and for every bounded continuous $\F^{\Y}$-adapted process $H$ and each bounded $\F^{\X}$-martingale $M$ we have $$\textstyle\int \sum_{t<N} H_t(y_1,\dots,y_t)\left [M_{t+1}(x_1,\dots,x_{t+1}) -  M_t(x_1,\dots,x_t)\right ] d\gamma= 0.$$
\end{enumerate}
\end{proposition}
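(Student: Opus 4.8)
The plan is to establish the cycle of implications $(1)\Rightarrow(3)\Rightarrow(4)\Rightarrow(2)\Rightarrow(1)$. Two elementary facts are used throughout. First, whenever $\gamma\in\Pi(\mu,\nu)$ (which is assumed in $(3)$ and $(4)$, and which we will also verify holds under $(2)$), the $\gamma$-conditional law of $(X_{t+1},\dots,X_N)$ given $(X_1,\dots,X_t)$ equals $\mu^{x_1,\dots,x_t}(dx_{t+1},\dots,dx_N)$, and any $\F^{\X}$-martingale, read as a process of the $x$-variables, is a $\mu$-martingale; this lets us translate between $\mu$- and $\gamma$-objects. Second, $C_b$ is dense in $L^1$ of each marginal in play, so an identity of conditional expectations tested against continuous bounded functions upgrades, via dominated convergence and a monotone-class argument, to the corresponding identity of conditional laws. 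For $(1)\Rightarrow(3)$: Definition \ref{def:causality}, in the conditional form recalled after it, says precisely that $\sigma(X_{t+1},\dots,X_N)$ and $\sigma(Y_1,\dots,Y_t)$ are $\gamma$-conditionally independent given $\sigma(X_1,\dots,X_t)$ for each $t$; conditioning $\int h_t(y_1,\dots,y_t)g_t(x_1,\dots,x_N)\,d\gamma$ on $(X_1,\dots,X_t,Y_1,\dots,Y_t)$ and using this to replace $\EXP^\gamma[g_t(X_1,\dots,X_N)\mid X_1,\dots,X_t,Y_1,\dots,Y_t]$ by $\EXP^\gamma[g_t(X_1,\dots,X_N)\mid X_1,\dots,X_t]=\int g_t(x_1,\dots,x_t,\tilde x_{t+1},\dots,\tilde x_N)\,\mu^{x_1,\dots,x_t}(d\tilde x_{t+1},\dots,d\tilde x_N)$ yields exactly \eqref{eq:test caus}.

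For $(3)\Rightarrow(4)$ one first upgrades \eqref{eq:test caus}. It states that $\EXP^\gamma\big[h_t(Y_1,\dots,Y_t)\big(g_t(X_1,\dots,X_N)-\EXP^\gamma[g_t(X_1,\dots,X_N)\mid X_1,\dots,X_t]\big)\big]=0$ for all $h_t\in C_b(\R^t)$ and $g_t\in C_b(\R^N)$, hence for all bounded measurable such functions by density. Replacing $g_t(x_1,\dots,x_N)$ by $\phi(x_1,\dots,x_t)g_t(x_1,\dots,x_N)$ for $\phi\in C_b(\R^t)$ and pulling $\phi(X_1,\dots,X_t)$ in and out of the inner conditional expectation, the same identity holds with $h_t(Y_1,\dots,Y_t)$ replaced by $\phi(X_1,\dots,X_t)h_t(Y_1,\dots,Y_t)$; a monotone-class argument then gives $\EXP^\gamma[g_t(X_1,\dots,X_N)\mid X_1,\dots,X_t,Y_1,\dots,Y_t]=\EXP^\gamma[g_t(X_1,\dots,X_N)\mid X_1,\dots,X_t]$, i.e.\ the conditional independence of $\sigma(X_{t+1},\dots,X_N)$ and $\sigma(Y_1,\dots,Y_t)$ given $\sigma(X_1,\dots,X_t)$ for each $t$. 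Now, for bounded continuous $\F^{\Y}$-adapted $H$ and bounded $\F^{\X}$-martingale $M$, conditioning the $t$-th summand of the integral in $(4)$ on $(X_1,\dots,X_t)$ makes it factorise, and the factor $\EXP^\gamma[M_{t+1}(X_1,\dots,X_{t+1})-M_t(X_1,\dots,X_t)\mid X_1,\dots,X_t]$ vanishes because $M$ is a $\mu$-martingale and $\gamma$ has first marginal $\mu$; summing over $t<N$ gives $(4)$.

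For $(4)\Rightarrow(2)$: that $\bar\gamma\in\Pi(p^1_*\mu,p^1_*\nu)$ and that \eqref{eq: marg nu} holds are immediate from $\gamma\in\Pi(\mu,\nu)$. To obtain \eqref{eq: marg mu} at a fixed $t<N$, fix $\phi\in C_b(\R)$, $\psi\in C_b(\R^t)$, $h\in C_b(\R^t)$, take the bounded $\F^{\X}$-martingale $M_s:=\EXP^\mu[\psi(X_1,\dots,X_t)\phi(X_{t+1})\mid\F^{\X}_s]$ (so $M_{t+1}=\psi(X_1,\dots,X_t)\phi(X_{t+1})$ and $M_t=\psi(X_1,\dots,X_t)\int\phi(x_{t+1})\,\mu^{x_1,\dots,x_t}(dx_{t+1})$) together with the $\F^{\Y}$-adapted process equal to $h(Y_1,\dots,Y_t)$ at time $t$ and $0$ otherwise; then $(4)$ reads $\EXP^\gamma\big[h(Y_1,\dots,Y_t)\psi(X_1,\dots,X_t)\big(\phi(X_{t+1})-\int\phi(x_{t+1})\,\mu^{x_1,\dots,x_t}(dx_{t+1})\big)\big]=0$, and letting $\phi,\psi,h$ vary this is precisely \eqref{eq: marg mu}. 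For $(2)\Rightarrow(1)$: integrating out the kernels in \eqref{successivedisintegration} from the last coordinate backwards, using \eqref{eq: marg mu}, \eqref{eq: marg nu} and $\bar\gamma\in\Pi(p^1_*\mu,p^1_*\nu)$, gives $\gamma\in\Pi(\mu,\nu)$; then, by induction on $k=1,\dots,N-t$ (base case \eqref{eq: marg mu}; inductive step using \eqref{eq: marg mu} at level $t+k$, the invariance of a conditional law under further conditioning on a $\sigma$-algebra with respect to which it is already measurable, and the consistency of the disintegrations of $\mu$), the $\gamma$-conditional law of $(X_{t+1},\dots,X_N)$ given $(X_1,\dots,X_t,Y_1,\dots,Y_t)$ equals $\mu^{x_1,\dots,x_t}(dx_{t+1},\dots,dx_N)$, in particular it is independent of $(Y_1,\dots,Y_t)$; hence $x\mapsto\gamma^x(B)$ is $\F^{\X}_t$-measurable for every $B\in\F^{\Y}_t$, which is causality.

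The step I expect to be the main obstacle is the upgrade inside $(3)\Rightarrow(4)$: \eqref{eq:test caus} only tests against functions of $(Y_1,\dots,Y_t)$ alone, whereas the conditional-independence statement requires test functions of $(X_1,\dots,X_t,Y_1,\dots,Y_t)$ jointly, so one must observe that absorbing a factor $\phi(x_1,\dots,x_t)$ into $g_t$ — which keeps it a continuous (or, after passing to limits, bounded measurable) function of $(x_1,\dots,x_N)$, hence an admissible test function in \eqref{eq:test caus} — exactly supplies the missing joint test functions. Everything else is routine: the density and monotone-class passages from $C_b$ to bounded measurable test functions, the choice of the right Doob martingale in $(4)\Rightarrow(2)$, and the coordinate-by-coordinate induction in $(2)\Rightarrow(1)$.
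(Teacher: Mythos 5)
Your proof is correct. The paper establishes the three equivalences $1\Leftrightarrow 3$, $1\Leftrightarrow 2$ and $3\Leftrightarrow 4$ separately, whereas you close a single cycle; the toolkit is the same (the conditional-independence reformulation of causality via Kallenberg, disintegration, density of $C_b$), but several of your individual implications go by genuinely different routes. For $3\Rightarrow 4$ the paper simply substitutes $g_t:=M_{t+1}$ and $h_t:=H_t$ into \eqref{eq:test caus} and sums, a one-line argument using $\int M_{t+1}\,\mu^{x_1,\dots,x_t}(dx_{t+1})=M_t$; you instead first upgrade \eqref{eq:test caus} to full conditional independence via the absorption trick $g_t\mapsto\phi(x_1,\dots,x_t)g_t$ — longer, but it isolates the useful intermediate fact and is essentially the paper's $3\Rightarrow 1$ in disguise, so nothing is lost. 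For the return trip the paper goes $4\Rightarrow 3$ by the telescoping choice $H_s=h_t\mathbf{1}_{s\geq t}$, $M_s=\int g_t\,\mu^{x_1,\dots,x_s}$, and then relies on $3\Rightarrow 1\Rightarrow 2$; you extract \eqref{eq: marg mu} directly from Point 4 with the Doob martingale of $\psi(X_1,\dots,X_t)\phi(X_{t+1})$, a clean shortcut (admissible since Point 4 requires continuity only of $H$, not of $M$). Finally, for $2\Rightarrow 1$ the paper verifies \eqref{eq:test caus} by writing out the iterated integral against the kernels, while you run the coordinate-by-coordinate induction on conditional laws; both work, and note that your last step tacitly invokes the symmetry of conditional independence to pass from ``$(X_{t+1},\dots,X_N)$ independent of $(Y_1,\dots,Y_t)$ given $(X_1,\dots,X_t)$'' to the measurability statement of Definition \ref{def:causality} — exactly the Kallenberg proposition the paper cites, so this is a legitimate (if implicit) appeal.
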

The proof of the above result is given in the next section. Notice that \eqref{eq: marg nu} is equivalent to $$\textstyle\int \gamma^{x_1,\dots,x_{t},y_1,\dots,y_{t}}(\R,dy_{t+1})\gamma^{y_1,\dots,y_{t}}(dx_1,\dots,dx_{t}) \,\,\, \,\,=\,\,\, \,\,\nu^{y_1,\dots,y_{t}}(dy_{t+1}),$$ which is more convenient for the derivation of the dynamic programming principle to come.

We now introduce our main optimization problem, the causal optimal transport problem: given some Borel cost\footnote{{In the following, we shall usually assume $c$ to be bounded from below; in principle it would also suffice that $c(x,y)\geq a(x)+b(y)$ for $a\in L^1(\mu),b\in L^1(\nu)$.}} function $c$ defined on $\R^N\times\R^N$ and the probability measures $\mu,\nu$, find the minimal cost at which they can be coupled in a causal way, i.e.\ consider
\begin{align}\textstyle
\inf\limits_{\gamma \in \Pi_c(\mu,\nu)} \int c d\gamma .\tag{Pc}\label{Pc}
\end{align}
Minimizing over the set $\Pi_{bc}(\mu,\nu)$  defines the bicausal optimal transport problem
\begin{align}\textstyle
\inf\limits_{\gamma \in \Pi_{bc}(\mu,\nu)} \int c d\gamma. \tag{Pbc}\label{Pbc}
\end{align}
These should be compared to the classical problem of optimal transport in which minimization is done over $\Pi (\mu, \nu)$.
%
%
%
Let us introduce an assumption, which eases the proof of Theorem \ref{teo:duality}:
\begin{assumption}\label{as:weakcont}
The measure $\mu$ is \textit{successively weakly continuous} in the sense that for each $t<N$, there is a version of the regular conditional kernel of $\mu$ w.r.t.\ its first $t$ variables s.t.
$$\textstyle(x_1,\dots,x_t)\in supp (\mu)\cap \R^t\mapsto \mu^{x_1,\dots,x_t}(d{x}_{t+1},\dots,d{x}_N )\in \mathcal{P}(\R^{N-t}),$$
is continuous w.r.t.\ the weak topology in the range and the relative topology in the domain.
\end{assumption}
Let us observe that if $supp(\mu)$ contains no accumulation points, as in the random walk/event tree setting, Assumption \ref{as:weakcont} is vacuously fulfilled. On the other extreme, there are many discrete-time processes with full support satisfying it, e.g.\ the Gaussian %
case. Let us also notice that $\mu^{x_1,\dots,x_{N-1}}$ is a univariate measure, and more generally we will commonly write $\mu^{x_1,\dots,x_t}(d{x}_{t+1},\dots,d{x}_{t+k})$ for the measure 
$\mu^{x_1,\dots,x_t}(d{x}_{t+1},\dots, dx_{t+k},\R^{N-t-k})$ and similarly $\mu(dx_1,\dots,dx_t)$ for the projection of $\mu$ into the first $t$-marginals.
The following sets of test functions will be instrumental for the dual formulation:
\begin{equation}\label{set of test functions}\textstyle
\resizebox{.85\hsize}{!}{$
\bbF  := 
\left\{
\begin{array}{c}
 F:\R^N\times\R^N\to\R \mbox{ s.t. } F(x_1,\dots,x_N,y_1,\dots,y_N) =  \\
\sum\limits_{t<N} h_t(y_1,\dots,y_t)\left [g_t(x_1,\dots,x_{N}) - \int g_t(x_1,\dots,x_t,{x}_{t+1},\dots, {x}_N)\mu^{x_1,\dots,x_t}(d{x}_{t+1},\dots,d{x}_N) \right ] , \\
\mbox{with}\,\, h_t\in C_b(\R^t),g_t\in C_b(\R^{N}) \,\,\mbox{for all }t<N  
\end{array}
\right\},$} 
\end{equation}

\begin{equation}\label{set of martingale test functions}\textstyle
\resizebox{.6\hsize}{!}{
$ \mathbb{S}  := 
\left\{
\begin{array}{c}
 S:\R^N\times\R^N\to\R \mbox{ s.t. } S(x_1,\dots,x_N,y_1,\dots,y_N) =  \\
\sum\limits_{t<N}  H_t(y_1,\dots,y_t)\left [M_{t+1}(x_1,\dots,x_{t+1}) -  M_t(x_1,\dots,x_t)\right ], \\
\mbox{with}\,\, H_t,M_t\in C_b(\R^{t}) \,\,\mbox{for all }t<N, \mbox{ and with } M \mbox{ a martingale}  
\end{array}
\right\}.
$} 
\end{equation}
All in all we are ready to present the basic primal attainability/no-duality-gap result: 
%
\begin{theorem}\label{teo:duality}
Suppose that $c:\R^N\times\R^N\to\R \cup \{+\infty\}$ is lower semicontinuous and bounded from below, and that Assumption \ref{as:weakcont} holds. Then there is no duality gap
\begin{align*}\textstyle
\inf\limits_{\gamma \in \Pi_c(\mu,\nu)} \int c d\gamma 
\,\,\, = \,\,\, \sup\limits_{\substack{\Phi,\Psi\in C_b(\R^N),  F\in\bbF \\\Phi  \oplus \Psi  \leq c  +F}}  \left[ \int \Phi d\mu + \int \Psi d\nu\right]
\,\,\, = \,\,\, \sup\limits_{\substack{\Phi,\Psi\in C_b(\R^N),  S\in\mathbb{S} \\ \Phi  \oplus \Psi  \leq c  +S}}  \left[ \int \Phi d\mu + \int \Psi d\nu\right],
\end{align*}
and the infimum on the l.h.s.\ (i.e.\ \eqref{Pc}) is attained. 
\end{theorem}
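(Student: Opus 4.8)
The idea is to cast \eqref{Pc} as an optimal transport problem subject to (infinitely many) linear constraints and then appeal to the duality theory for such problems developed in \cite{Zaev,BGriessler}. First I would rewrite causality in linear form via Proposition \ref{prop:equiv charact}: by the equivalence of items (1), (3) and (4) there, a plan $\gamma\in\Pi(\mu,\nu)$ is causal precisely when $\int F\,d\gamma=0$ for every $F\in\bbF$, and equivalently when $\int S\,d\gamma=0$ for every $S\in\mathbb S$, with $\bbF$ and $\mathbb S$ the linear families of test functions from \eqref{set of test functions} and \eqref{set of martingale test functions}. Thus $\Pi_c(\mu,\nu)$ equals the intersection of $\Pi(\mu,\nu)$ with the annihilator of $\bbF$ (resp.\ of $\mathbb S$), which is exactly the framework of transport under linear constraints. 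Here is where Assumption \ref{as:weakcont} enters: it guarantees $\bbF\subseteq C_b(\R^N\times\R^N)$, the only not-obviously-continuous ingredient of a member of $\bbF$ being the term $x\mapsto\int g_t(x_1,\dots,x_t,\tilde x_{t+1},\dots,\tilde x_N)\,\mu^{x_1,\dots,x_t}(d\tilde x_{t+1},\dots,d\tilde x_N)$, which is bounded and continuous because under Assumption \ref{as:weakcont} the kernel $(x_1,\dots,x_t)\mapsto\mu^{x_1,\dots,x_t}$ is weakly continuous while $g_t\in C_b$; the family $\mathbb S$ lies in $C_b$ with no such hypothesis. (As $\mu,\nu$ are carried by $\X,\Y$ one works on $\X\times\Y$ and uses Tietze extension to pass between $C_b(\X)\times C_b(\Y)$ and $C_b(\R^N)\times C_b(\R^N)$ for the potentials.)

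Granting this, attainment of the infimum in \eqref{Pc} is soft: $\Pi(\mu,\nu)$ is weakly compact by Prokhorov's theorem (the marginals being tight), and for $F\in\bbF\subseteq C_b$ the map $\gamma\mapsto\int F\,d\gamma$ is weakly continuous, so $\Pi_c(\mu,\nu)$ is a weakly closed, hence weakly compact, subset of $\Pi(\mu,\nu)$; it is nonempty since $\mu\otimes\nu\in\Pi_c(\mu,\nu)$. Since $c$ is lower semicontinuous and bounded below, $\gamma\mapsto\int c\,d\gamma$ is weakly lower semicontinuous and attains its minimum on the compact set $\Pi_c(\mu,\nu)$.

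For the absence of a duality gap I would invoke the duality theorem for linearly-constrained transport of \cite{Zaev,BGriessler}: for a lower semicontinuous cost bounded below and a linear subspace $W\subseteq C_b$, one has $\inf\{\int c\,d\gamma:\gamma\in\Pi(\mu,\nu),\ \int w\,d\gamma=0\ \forall w\in W\}=\sup\{\int\Phi\,d\mu+\int\Psi\,d\nu:\Phi,\Psi\in C_b,\ w\in W,\ \Phi\oplus\Psi\le c+w\}$. Taking $W=\bbF$ gives the middle term of the claimed identity and $W=\mathbb S$ the last one; as both equal the value of \eqref{Pc}, the two suprema coincide. Alternatively, a self-contained proof: write the value of \eqref{Pc} as $\inf_{\gamma\in\Pi(\mu,\nu)}\sup_{F\in\bbF}\int(c-F)\,d\gamma$ (by linearity of $\bbF$ the inner supremum is $\int c\,d\gamma$ on causal plans and $+\infty$ otherwise), interchange $\inf$ and $\sup$ by Sion's minimax theorem — valid since $\Pi(\mu,\nu)$ is convex and weakly compact and $(\gamma,F)\mapsto\int(c-F)\,d\gamma$ is convex and weakly lower semicontinuous in $\gamma$ and linear in $F$ — and then apply classical Kantorovich duality to each modified cost $c-F$.

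The step I expect to be the main obstacle is the bookkeeping behind the first paragraph: verifying that every $F\in\bbF$ is bounded \emph{and} continuous — which is precisely the purpose of Assumption \ref{as:weakcont} — and identifying $\Pi_c(\mu,\nu)$ exactly with the annihilator of $\bbF$ (resp.\ $\mathbb S$) inside $\Pi(\mu,\nu)$, so that the cited duality theorem applies verbatim. The compactness, lower-semicontinuity and minimax/Kantorovich steps are then routine.
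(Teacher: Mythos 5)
Your proposal is correct and follows essentially the same route as the paper: causality is recast as the linear constraints of Proposition \ref{prop:equiv charact}, Assumption \ref{as:weakcont} is used (as in Lemma \ref{lem:quantcausalcontinuous}) to place $\bbF$ inside $C_b(\R^N\times\R^N)$, compactness of $\Pi_c(\mu,\nu)$ yields attainment, and the duality theorem of \cite{Zaev} for linearly constrained transport gives the no-gap statement (the paper applies it first to $c\in C_b$ and then extends to l.s.c.\ costs by approximation). The only minor caveat is that for the $\mathbb{S}$-formulation one still needs Assumption \ref{as:weakcont} to know that testing against \emph{continuous} martingales suffices to characterize causality, since the martingales built from the $g_t$'s in Proposition \ref{prop:equiv charact} are continuous only under that assumption.
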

We observe that Assumption \ref{as:weakcont} allows us to test against continuous bounded functions, instead of just bounded Borel, which is necessary for the simple proof of the previous theorem given in Section \ref{Duality}. However, this assumption can be lifted at the price of losing simplicity (see \cite{Lassalle2}, which was written concurrently), and we choose not to prove the most general statement as our interest lies in other aspects of the problem. It is also easy to see that the dual problem can be reduced to both:
$$ \textstyle\sup\limits_{\substack{\Psi\in C_b(\R^N), F\in\bbF , \Psi  \leq c  +F}}   \int \Psi d\nu \,\,\,\, \mbox{ and }\,\,\,\, \sup\limits_{\substack{\Psi\in C_b(\R^N), S\in\mathbb{S} , \Psi  \leq c  +S}}   \int \Psi d\nu . $$
The analogue of this theorem for bicausal transport plans is given in the next section, and was first obtained in \cite[Theorem 7.2]{PflugPichler}

Going back to Proposition \ref{prop:equiv charact}, the importance of decomposition (\ref{successivedisintegration})  lies in the fact that it suggests that the causal optimal transport problem can be solved recursively if the starting measure $\mu$ is Markovian and the cost function has a ``semiseparable'' structure:
%

%
\begin{theorem}[{Dynamic Programming Principle (DPP)} for causal plans]\label{thm:DPPcausal}
Let $\mu, \nu \in \mathcal{P}(\R^N)$, suppose that $\mu$ is a Markov measure and that the cost is semiseparable in the sense that for non-negative l.s.c.\ functions $c_t$ we have \mbox{$\textstyle c=\sum\limits_t c_t(x_t,y_1,\dots,y_t)$.} \\
Then, starting from $V^c_N:=0$ and defining recursively for $t=N,\dots, 2$:
\begin{multline}\textstyle
V^c_{t-1}(y_1,\dots,y_{t-1};m(dx_{t-1}))\,\,:= \\\textstyle \inf\limits_{\gamma_t\in \Pi\bigl(\int_{x_{t-1}}m(dx_{t-1})\mu^{x_{t-1}}(dx_t)\, ,\, \nu^{y_1,\dots,y_{t-1}}(dy_t)\bigr)} \int \gamma_t(dx_t,dy_t)\Big\{c_t(x_t,y_1,\dots,y_t)  \\
  + V^c_{t}(y_1,\dots,y_t;\gamma_t^{y_t}(dx_{t}))\Big\},\label{weak-trans}
\end{multline}
%
%
and
$$\textstyle V^c_0:= \inf\limits_{\gamma_1\in \Pi(p^1_*\mu,p^1_*\nu )} \int \gamma_1(dx_1,dy_1) \{c_1(x_1,y_1) + V^c_1(y_1;\gamma_1^{y_1}(dx_1))\}, $$
the integrals above are well-defined and $V_0^c={value\eqref{Pc}}$, i.e.\ the recursion determines \eqref{Pc}. Furthermore, if Assumption \ref{as:weakcont} holds, there exists a causal optimizer $\tilde{\gamma}$ with the additional
property that for all $t$: $$\textstyle\tilde{\gamma}^{x_1,\dots,x_t,y_1,\dots,y_t}(dx_{t+1},dy_{t+1})= \tilde{\gamma}^{x_t,y_1,\dots,y_t}(dx_{t+1},dy_{t+1}) ,$$ and each of the one-step optimization problems in \eqref{weak-trans} corresponds to a \textit{general transport problem} of \cite{weaktransport} which is convex and l.s.c.\ in the kernel (i.e.\ in $\gamma_t^{y_t}$, for fixed $y_1,\dots,y_{t-1}$).
\end{theorem}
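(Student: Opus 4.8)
The plan is to prove the identity $V^c_0=\text{value}\,\eqref{Pc}$ by two inequalities and then, under Assumption~\ref{as:weakcont}, to promote the construction to an exact optimizer. Two features of the setup are used repeatedly. First, since $\mu$ is Markov, $\mu^{x_1,\dots,x_t}(dx_{t+1})=\mu^{x_t}(dx_{t+1})$, so by the causality characterization~\eqref{eq: marg mu} of Proposition~\ref{prop:equiv charact} every $\gamma\in\Pi_c(\mu,\nu)$ has conditional law of $x_{t+1}$ given $(x_1,\dots,x_t,y_1,\dots,y_t)$ equal to $\mu^{x_t}(dx_{t+1})$ --- hence also given $(x_t,y_1,\dots,y_t)$, and, after averaging, given $(y_1,\dots,y_t)$ equal to $\int m^{y_1,\dots,y_t}_t(dx_t)\mu^{x_t}(dx_{t+1})$, where $m^{y_1,\dots,y_t}_t$ is the conditional law of $x_t$ given $(y_1,\dots,y_t)$. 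Second, $c_t\ge0$ forces $V^c_t\ge0$ by downward induction, so every integral in the recursion is an integral of a nonnegative function and is well-defined in $[0,\infty]$; the requisite (Borel/universal) measurability of $(y_1,\dots,y_t;m)\mapsto V^c_t$ is obtained along the way from the measurable dependence of the data and standard measurable-selection theorems.

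For $V^c_0\le\int c\,d\gamma$, with $\gamma\in\Pi_c(\mu,\nu)$ arbitrary: write $c=\sum_t c_t(x_t,y_1,\dots,y_t)$, and let $\gamma^{y_1,\dots,y_{t-1}}_t(dx_t,dy_t)$ be the conditional law of $(x_t,y_t)$ given $(y_1,\dots,y_{t-1})$ under $\gamma$. By the displayed fact and by~\eqref{eq: marg nu}, $\gamma^{y_1,\dots,y_{t-1}}_t$ is admissible in the one-step problem defining $V^c_{t-1}(y_1,\dots,y_{t-1};m^{y_1,\dots,y_{t-1}}_{t-1})$, and its $y_t$-kernel is $m^{y_1,\dots,y_t}_t$. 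A backward induction on $t$ then gives $V^c_t(y_1,\dots,y_t;m^{y_1,\dots,y_t}_t)\le\EXP^\gamma[\sum_{s>t}c_s\mid Y_1,\dots,Y_t]$ for $\gamma$-a.e.\ $(y_1,\dots,y_t)$, the base case being $V^c_N=0$ and the step using admissibility together with the tower property. At $t=0$, with $\gamma_1=\bar\gamma\in\Pi(p^1_*\mu,p^1_*\nu)$, this yields $V^c_0\le\EXP^\gamma[\sum_s c_s]=\int c\,d\gamma$, hence $V^c_0\le\text{value}\,\eqref{Pc}$.

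For the reverse inequality, fix $\varepsilon>0$, pick (universally) measurable $\varepsilon_t$-minimizers $(y_1,\dots,y_{t-1};m)\mapsto\hat\gamma_t$ of the one-step problems with $\sum_t\varepsilon_t=\varepsilon$, and build a plan $\tilde\gamma$ forward in time: set $\bar\gamma:=\hat\gamma_1$ and, inductively, let the conditional law of $(x_{t+1},y_{t+1})$ given the whole past depend only on $(x_t,y_1,\dots,y_t)$ via $\tilde\gamma^{x_t,y_1,\dots,y_t}(dx_{t+1},dy_{t+1}):=\mu^{x_t}(dx_{t+1})\,\bigl(\hat\gamma_{t+1}(y_1,\dots,y_t;m^{y_1,\dots,y_t}_t)\bigr)^{x_{t+1}}(dy_{t+1})$, where $m^{y_1,\dots,y_t}_t$ is the conditional law of $x_t$ given $(y_1,\dots,y_t)$ under the plan built so far. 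A direct check shows that $\tilde\gamma$ satisfies condition~(2) of Proposition~\ref{prop:equiv charact}, so $\tilde\gamma\in\Pi_c(\mu,\nu)$; that the conditional law of $(x_{t+1},y_{t+1})$ given $(y_1,\dots,y_t)$ under $\tilde\gamma$ is exactly $\hat\gamma_{t+1}(y_1,\dots,y_t;m^{y_1,\dots,y_t}_t)$ (so the $m_t$'s are the ones fed into the recursion); and that, conditioning $\int c\,d\tilde\gamma=\sum_t\EXP^{\tilde\gamma}[c_t]$ successively on $(Y_1,\dots,Y_{t-1})$ and invoking $\varepsilon_t$-optimality, the estimates telescope to $\int c\,d\tilde\gamma\le V^c_0+\varepsilon$. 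Letting $\varepsilon\downarrow0$ gives $\text{value}\,\eqref{Pc}\le V^c_0$, so the two bounds combine to $V^c_0=\text{value}\,\eqref{Pc}$.

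Finally, under Assumption~\ref{as:weakcont} I would show by downward induction that, for each $t$ and each fixed $(y_1,\dots,y_t)$, the map $m\mapsto V^c_t(y_1,\dots,y_t;m)$ is convex and l.s.c.\ on $\mathcal{P}(\R)$. Granting this at level $t$, the level-$(t-1)$ problem is $\inf\int\bar C_t(y_t,\gamma_t^{y_t})\,\nu^{y_1,\dots,y_{t-1}}(dy_t)$ over couplings $\gamma_t$ with the two prescribed marginals, where $\bar C_t(y_t,p):=\int c_t(x_t,y_1,\dots,y_t)\,p(dx_t)+V^c_t(y_1,\dots,y_t;p)$ is l.s.c.\ and convex in $p$ --- i.e.\ a general transport problem of~\cite{weaktransport}; convexity survives mixing because competitors share the second marginal $\nu^{y_1,\dots,y_{t-1}}$, so the $y_t$-kernel of a convex combination equals the convex combination of the $y_t$-kernels. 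Since $m\mapsto\int m(dx_{t-1})\mu^{x_{t-1}}(dx_t)$ is weakly continuous (Assumption~\ref{as:weakcont}) and the set of couplings with fixed marginals is weakly compact, the lower semicontinuity result for general transport costs yields weak lower semicontinuity of the objective and hence attainment; this both closes the induction (lower semicontinuity of $V^c_{t-1}$ in $m$) and allows the forward construction above to be run with $\varepsilon=0$, using a measurable selection of exact minimizers, producing a causal optimizer $\tilde\gamma$ with $\tilde\gamma^{x_1,\dots,x_t,y_1,\dots,y_t}=\tilde\gamma^{x_t,y_1,\dots,y_t}$. The main obstacle is the measurability/semicontinuity bookkeeping for the value functions --- in particular carrying the convexity-and-l.s.c.\ induction through in a form compatible with the weak transport machinery --- and the verification that the pasted kernels genuinely define a causal plan; the probabilistic content (admissibility of conditional laws, telescoping of costs) is comparatively routine once the Markov-plus-causality structure is in hand.
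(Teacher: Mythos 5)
Your proposal is correct in substance but routes around the paper's main structural device. The paper does not prove the theorem directly: it first introduces the class of \emph{causal quasi-Markov} plans ($\gamma^{x_1,\dots,x_t,y_1,\dots,y_t}=\gamma^{x_t,y_1,\dots,y_t}$), proves the DPP for the problem restricted to that class (Theorem \ref{thm:markovDPP}, via lower semianalyticity of the value functions, measurable $\varepsilon$-optimal selections, and a forward concatenation of kernels), and then shows separately (Proposition \ref{prop:markov=cuasal}) that for Markov $\mu$ and semiseparable cost every causal plan has a quasi-Markov modification with the same cost. You instead work directly with an arbitrary causal plan, observing that causality plus the Markov property of $\mu$ force the conditional law of $x_{t+1}$ given the $y$-past to equal $\int m_t(dx_t)\mu^{x_t}(dx_{t+1})$, so the conditional kernels $\gamma^{y_1,\dots,y_{t-1}}(dx_t,dy_t)$ are admissible competitors and the semiseparable cost telescopes along them. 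This absorbs the content of Proposition \ref{prop:markov=cuasal} into the admissibility computation and is a legitimate streamlining; what the paper's detour buys is a reusable intermediate statement (the cqm DPP holds without further reduction) and an immediate reading-off of the quasi-Markov property of the optimizer, which in your version comes from the forward construction instead. Your $\varepsilon$-optimal forward construction and the $\varepsilon\downarrow 0$ argument coincide with the paper's Steps 3--4.

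One step deserves more care than you give it: attainment of the one-step problems. The disintegration map $\gamma_t\mapsto\gamma_t^{y_t}$ is not weakly continuous, so lower semicontinuity and convexity of $p\mapsto\bar C_t(y_t,p)$ do \emph{not} by themselves yield weak lower semicontinuity of $\gamma_t\mapsto\int\bar C_t(y_t,\gamma_t^{y_t})\,\nu^{y_1,\dots,y_{t-1}}(dy_t)$, which is what your ``weak compactness plus l.s.c.'' argument needs. The paper's Step 6 handles exactly this by extracting a Koml\'os/Ces\`aro-convergent subsequence of the kernels (Balder's theorem), using convexity in the kernel to control the averaged cost and Fatou plus pointwise l.s.c.\ to pass to the limit. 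You would need that argument (or an attainment result from the general-transport literature valid without compactness of the state space) to close the induction; as the paper remarks, its Step 6 is itself the source of such a result rather than a citation of one.
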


The very last line of the previous result means concretely that 
\begin{multline}\textstyle
V^c_{t-1}(y_1,\dots,y_{t-1};m(dx_{t-1}))\,\,= \\ \textstyle\inf\limits_{\substack{y_t\mapsto \gamma_t^{y_t} \,\, s.t. \\ \int_{y_t} \nu^{y_1,\dots,y_{t-1}}(dy_{t})\gamma^{y_t}(dx_t)\, =\, \int_{x_{t-1}}m(dx_{t-1})\mu^{x_{t-1}}(dx_t)     }}\int_{y_t} \nu^{y_1,\dots,y_{t-1}}(dy_{t}) \Big\{ \int_{x_t} \gamma_t^{y_t}(dx_t)c_t(x_t,y_1,\dots,y_t)   \\
 + V^c_{t}(y_1,\dots,y_{t-1},y_t;\gamma_t^{y_t}(dx_{t}))\Big\},\label{weak-trans-2}
\end{multline}
and that the function in curly brackets in the r.h.s.\ is convex and l.s.c.\ in $\gamma_t^{y_t}$ (ceteris paribus). The same function in brackets is only jointly universally measurable (actually lower semianalytic) as far as we can say, regardless of Assumption \ref{as:weakcont}. If on the other hand \mbox{Assumption \ref{as:biweakcont}} holds, then this function is jointly l.s.c. 
In \cite{weaktransport} problems of this kind have been \mbox{analysed} and their duality theory has been established; we provide the dual formulation of \mbox{$V^c_{t-1}$ in \eqref{dual DPP}}. { To fully grasp the nature of the DPP one should better look at an example; this requires the use of some extra notation which will be  also useful also for Theorem \ref{main theorem} below. We denote by $$\textstyle F_{\eta}(\cdot):= \nu((-\infty,\cdot]),$$ the usual cumulative distribution function of a probability measure $\eta$ on the line, and by  $F^{-1}_{\eta}(u)$ its left-continuous generalized inverse, i.e.\ $$\textstyle F^{-1}_{\eta}(u) = \inf\left\lbrace y: F_{\eta} (y) \geq u \right\rbrace .$$


\begin{example}\label{Ex DPP} Take $N=2$ and $c=[x_1-y_1]^2+[x_2-y_2]^2$. Using the well-known optimality of the monotone coupling on the line we get:
\begin{align*}
&\textstyle V_1(y_1;\gamma_1^{y_1}(dx_1)):= \inf\limits_{\gamma_2\in\Pi(\int_{x_1}\gamma_1^{y_1}(dx_1)\mu^{x_1},\nu^{y_1})}\int \gamma_2(dx_2,dy_2)[x_2-y_2]^2=\int_0^1\Bigl[F_{\nu^{y_1}}^{-1}(u)- F_{\int_{x_1}\gamma_1^{y_1}(dx_1)\mu^{x_1}}^{-1}(u)\Bigr]^2du,\\
&\textstyle V_0 =  \inf\limits_{\gamma_1\in\Pi(p^1_*\mu,p^1_*\nu)}\Bigl \{ \int_{x_1,y_1} \gamma_1(dx_1,dy_1)[x_1-y_1]^2 + \int_{y_1}\nu(dy_1)\int_0^1\Bigl[F_{\nu^{y_1}}^{-1}(u)- F_{\int_{x_1}\gamma_1^{y_1}(dx_1)\mu^{x_1}}^{-1}(u)\Bigr]^2du\Bigr\}.
\end{align*}
From this the non-linear behaviour of the cost function in $V_0$, in terms of $\gamma_1$, is apparent in its last term.
\end{example}
}

%

Our next main result, Theorem \ref{main theorem} (which we prove in Section \ref{Bicausal case}), establishes the equivalence of \eqref{Pc} and \eqref{Pbc} under Condition \ref{independence cond} below, and furthermore, it identifies the causal optimizer of \eqref{Pc} for convex separable costs (as well as clarifying when these are Monge maps), in a setting relevant for future applications.
%
%
 We denote $F_{\eta_1}$ the distribution of $p^1_*\eta$ whenever $\eta$ is a measure in multiple dimensions. The increasing $N$-dimensional \textbf{Knothe-Rosenblatt} rearrangement\footnote{ {The reader might know it by the name \emph{quantile transform} or \emph{Knothe-Rosenblatt coupling}.}} of $\mu$ and $\nu$ is defined as the law of the random vector $(X_1^*,\dots,X_N^*, Y_1^*,\dots,Y_N^*)$ where
%
\begin{align}\label{quantile transforms}\textstyle
& X_1^* = F_{\mu_1}^{-1} (U_1),\hspace{46pt}  Y_1^* = F_{\nu_1}^{-1} (U_1), \hspace{31pt}\mbox{ and inductively }\\ \nonumber
& X_t^* = F_{\mu^{X_1^*,\dots,X_{t-1}^*}}^{-1} (U_t),\quad Y_t^* = F_{\nu^{Y_1^*,\dots,Y_{t-1}^*}}^{-1} (U_t),\,\,\,\, \text{for } t=2,\dots,N,
\end{align}
for $U_1,\dots,U_N$ independent and uniformly distributed random variables on $[0,1]$. Additionally, if $\mu$-a.s.\ all the conditional distributions of $\mu$ are atomless (e.g.\ if $\mu$ has a density), then this rearrangement is induced by the (Monge) map
$$\textstyle (x_1,\dots,x_N)\mapsto T(x_1,\dots,x_N):=(T^1(x_1),T^2(x_2;x_1),\dots,T^N(x_N; x_1, \dots. x_{N-1})),$$
where 
\begin{align}\textstyle
T^1(x_1)&:= \,\, F_{\nu_1}^{-1}\circ F_{\mu_1}(x_1), \nonumber\\
T^t(x_t; x_1,\dots,x_{t-1})& :=\,\, F_{\nu^{T^1(x_1),\dots,T^{t-1}(x_{t-1};x_1,\dots,x_{t-2})}}^{-1}\circ F_{\mu^{x_1,\dots,x_{t-1}}}(x_t),\,\,\,\, t\geq 2. \label{KRdef}
\end{align}
%

%
%
%

\begin{theorem}\label{main theorem}
Assume that $c$ is l.s.c.\  bounded from below and has a separable structure
\begin{align}\label{separable cost}\textstyle
c(x_1,\dots,x_{N},y_1,\dots,y_{N})&=\textstyle \sum\limits_{t\leq N} c_t(x_t,y_t).
\end{align}
Further suppose that the starting measure $\mu$ is the product of its marginals, i.e.
\begin{equation}\textstyle
\mu(dx_1,\dots, dx_N) = \mu_1(dx_1) \dots \mu_N(dx_N). \label{independence cond}
\end{equation}
Then the values of \eqref{Pc} and \eqref{Pbc} coincide. If moreover, $c_t(x,y)=c_t(x-y)$ and $c_t$ is convex
\footnote{{We overload notation here; we mean $c_t(x,y)=h_t(x-y)$ and redefine $c_t$ as $h_t$. This ``univariate'' structure is not indispensable for the result: it can be replaced by a more general Spence-Mirrlees condition.}}, then a solution to \eqref{Pc} is given by the Knothe-Rosenblatt rearrangement
%
%
Additionally, if each $\mu_i$ is atomless (e.g.\ if they have a density), then this rearrangement is induced by the Monge map \eqref{KRdef}. 
%
%
%
\end{theorem}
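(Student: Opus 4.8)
The plan is to prove the theorem in three stages, following the hint in the excerpt: first show the Knothe--Rosenblatt (KR) rearrangement is \emph{bicausal}; second show it is \emph{optimal among bicausal plans} for separable convex costs (this is the content of Proposition~\ref{main result bc}); and third show that under the product assumption \eqref{independence cond} the values of \eqref{Pc} and \eqref{Pbc} coincide, so that a bicausal optimizer is automatically a causal optimizer.

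\textbf{Step 1: the KR plan is bicausal.} From the definition \eqref{quantile transforms}, the plan $\gamma^* = \mathrm{Law}(X^*,Y^*)$ has the feature that $(X_1^*,\dots,X_t^*)$ and $(Y_1^*,\dots,Y_t^*)$ are both measurable functions of $(U_1,\dots,U_t)$, and $(X_{t+1}^*,\dots,X_N^*)$, $(Y_{t+1}^*,\dots,Y_N^*)$ are functions of $(U_{t+1},\dots,U_N)$, with the $U_i$ independent. I would verify the characterization in Proposition~\ref{prop:equiv charact}(2): the successive kernels $\gamma^{*,x_1,\dots,x_t,y_1,\dots,y_t}$ factor because, conditionally on $(x_1,\dots,x_t,y_1,\dots,y_t)$, which pins down $(U_1,\dots,U_t)$ (using that the generalized inverses are bijective up to the relevant null sets when marginals are atomless, or more carefully by a monotone-coupling argument), the variable $x_{t+1} = F^{-1}_{\mu_1}(U_{t+1})$ pushes $\mathrm{Unif}[0,1]$ forward exactly to $\mu_{t+1} = \mu^{x_1,\dots,x_t}$ (by \eqref{independence cond}) and likewise $y_{t+1} = F^{-1}_{\nu^{y_1,\dots,y_t}}(U_{t+1})$ to $\nu^{y_1,\dots,y_t}$. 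This gives \eqref{eq: marg mu} and \eqref{eq: marg nu}, hence causality; symmetry of the construction in $X^*$ and $Y^*$ gives causality of $e_*\gamma^*$, hence bicausality.

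\textbf{Step 2: bicausal optimality.} Here I would run the bicausal DPP (the bicausal analogue of Theorem~\ref{thm:DPPcausal}, presumably stated in Section~\ref{Bicausal case}), whose one-step problems are genuine linear transport problems on the line of the form $\inf_{\gamma_t \in \Pi(\mu^{x_1,\dots,x_{t-1}}, \nu^{y_1,\dots,y_{t-1}})} \int c_t(x_t - y_t)\,d\gamma_t$ \emph{plus} a cost-to-go term. For the pure cost $\sum_t c_t(x_t-y_t)$ with $c_t$ convex, the classical fact (Villani) that the monotone (quantile) coupling $\bigl(F^{-1}_{\mu^{\cdot}}(U), F^{-1}_{\nu^{\cdot}}(U)\bigr)$ is optimal for each one-step problem on $\mathbb{R}$, combined with an inductive argument that the value functions inherit the right monotone structure, shows that choosing the monotone coupling at every node is optimal; this is exactly the KR plan. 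This step generalizes the corresponding result of R\"uschendorf \cite{Rueschendorf} and is where the convexity of $c_t$ and the separability \eqref{separable cost} are used; the product assumption \eqref{independence cond} is \emph{not} needed for bicausal optimality itself.

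\textbf{Step 3: coincidence of \eqref{Pc} and \eqref{Pbc}.} Since $\Pi_{bc}(\mu,\nu) \subseteq \Pi_c(\mu,\nu)$, one always has $\mathrm{value}\eqref{Pc} \le \mathrm{value}\eqref{Pbc}$. For the reverse under \eqref{independence cond}, I would take any causal $\gamma \in \Pi_c(\mu,\nu)$ and argue that it can be modified into a bicausal plan without increasing the cost, or more directly that the causal DPP of Theorem~\ref{thm:DPPcausal} reduces, when $\mu$ is product, to the bicausal one: in \eqref{weak-trans} the first marginal of $\gamma_t$ is forced to be $\mu^{x_{t-1}} = \mu_t$ \emph{regardless} of the conditioning measure $m$, so the measure-valued state variable $m$ drops out of the $\mu$-side and the causal recursion has the same one-step feasible sets as the bicausal one; because the cost-to-go terms also match, the two recursions produce the same value. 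The main obstacle, and the part needing the most care, is precisely this Step~3 collapse — making rigorous that under \eqref{independence cond} the extra freedom a causal plan has over a bicausal one (namely that on the $X$-side it may look at past $Y$'s) is worthless because the conditional law of $X_t$ is deterministic. I expect this to hinge on showing that the value function $V^c_t(y_1,\dots,y_t; m)$ in fact does not depend on $m$ when $\mu$ is a product measure, proven by downward induction on $t$ using that $\int_{x_{t-1}} m(dx_{t-1})\mu^{x_{t-1}}(dx_t) = \mu_t(dx_t)$ for every probability measure $m$. Once $V^c_t$ is independent of $m$, the recursion \eqref{weak-trans} is literally the bicausal recursion, the optimizer is the nodewise monotone coupling, i.e.\ the KR plan, and the final Monge statement follows because when each $\mu_i$ is atomless the maps $F^{-1}_{\nu^{\cdot}} \circ F_{\mu^{\cdot}}$ of \eqref{KRdef} are well-defined $\mu$-a.e.\ and push $\mu$ forward to the KR plan.
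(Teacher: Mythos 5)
Your proposal is correct, and Steps 1--2 coincide with the paper's route: bicausality of the Knothe--Rosenblatt plan is the content of Remark \ref{rem:second_Monge}, and bicausal optimality for separable convex costs is exactly Proposition \ref{main result bc} (proved there via the bicausal DPP of Proposition \ref{prop:dynprog}; in the product case the cost-to-go is independent of the $x$'s, so each one-step problem is a plain one-dimensional transport problem plus a function of $y$ alone, and no ``monotone structure'' of the value function is really needed). Where you genuinely diverge is Step 3, the causal/bicausal equality. The paper does \emph{not} go through the DPP here: given any $\gamma\in\Pi_c(\mu,\nu)$ decomposed as in \eqref{successivedisintegration}, it averages each kernel over the past $x$'s conditionally on the past $y$'s, setting $\gamma^{y_1,\dots,y_t}(dx_{t+1},dy_{t+1}) = \int \gamma^{x_1,\dots,x_t,y_1,\dots,y_t}(dx_{t+1},dy_{t+1})\,\gamma^{y_1,\dots,y_t}(dx_1,\dots,dx_t)$; under \eqref{independence cond} each averaged kernel lies in $\Pi(\mu_{t+1},\nu^{y_1,\dots,y_t})$, so by Proposition \ref{prop:recursivecharacterization} the concatenated $\tilde\gamma$ is bicausal, and since $\tilde\gamma$ and $\gamma$ have the same joint law of $(x_{t+1},y_{t+1})$ for each $t$, the separable cost is unchanged. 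This is a plan-by-plan, cost-preserving projection onto $\Pi_{bc}(\mu,\nu)$ and needs no convexity, Markovianity, or value-function machinery. Your route --- showing that $V^c_t(y_1,\dots,y_t;m)$ is independent of $m$ by downward induction because $\int m(dx_{t-1})\mu^{x_{t-1}}(dx_t)=\mu_t(dx_t)$ for every $m$, so that the causal recursion of Theorem \ref{thm:DPPcausal} literally collapses to the bicausal one --- is also valid (the hypotheses of Theorem \ref{thm:DPPcausal} are met: a product measure is Markov and \eqref{separable cost} is semiseparable), but it only yields equality of values and leans on the heavy selection/measurability apparatus behind the two DPP theorems. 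One small caveat on your Step 2 aside: bicausal optimality of the KR plan does require \emph{some} hypothesis beyond separability and convexity when $\mu$ is not a product, namely the comonotone regression condition \eqref{Ruesch+indep cond}; Example \ref{counerexample} shows that even that condition does not rescue the causal/bicausal equality, so the product assumption is doing real work in Step 3 and cannot be dispensed with.
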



\begin{corollary}
\label{coro KR}
Assume that $c$ is l.s.c.\  bounded from below and is of the form \begin{align}\label{separable cost increments}\textstyle
c(x_1,\dots,x_{N},y_1,\dots,y_{N})&= \textstyle c_1(x_1,y_1)+ \sum\limits_{1<t\leq N} c_t(x_t-x_{t-1},y_t-y_{t-1}),
\end{align}
and the source measure $\mu$ has independent increments. Then the values of \eqref{Pc} and \eqref{Pbc} coincide.  If moreover, $c_t(a,b)=c_t(a-b)$ and $c_t$ is convex then a solution to \eqref{Pc} is given by the Knothe-Rosenblatt rearrangement \eqref{quantile transforms} and if additionally each $\mu_i$ is atomless  then this rearrangement is induced by the Monge map \eqref{KRdef}.
\end{corollary}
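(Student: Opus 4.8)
The plan is to reduce Corollary~\ref{coro KR} to Theorem~\ref{main theorem} by the linear change of variables that turns increments into positions. Let $\phi:\R^N\to\R^N$ be the triangular bijection $\phi(z)_1=z_1$, $\phi(z)_t=z_t-z_{t-1}$ for $t>1$; both $\phi$ and $\phi^{-1}:w\mapsto(w_1,w_1+w_2,\dots,w_1+\cdots+w_N)$ are \emph{adapted}, i.e.\ their $t$-th component depends only on the first $t$ coordinates. Put $\Phi:=\phi\times\phi$ on $\R^N\times\R^N$. First I would record two transfer facts. (i) $\Phi_*$ is a bijection of $\Pi_c(\mu,\nu)$ onto $\Pi_c(\phi_*\mu,\phi_*\nu)$, and likewise of $\Pi_{bc}(\mu,\nu)$ onto $\Pi_{bc}(\phi_*\mu,\phi_*\nu)$: since $\phi$ is a triangular homeomorphism, the coordinate filtrations $\F^\X_t,\F^\Y_t$ are carried onto themselves by $\phi$ and $\phi^{-1}$, and disintegration commutes with pushforward, so $(\Phi_*\gamma)^{x'}=\phi_*\bigl(\gamma^{\phi^{-1}(x')}\bigr)$ and the defining measurability of Definition~\ref{def:causality} is preserved (one may alternatively argue via the test-function characterisation of Proposition~\ref{prop:equiv charact}). (ii) For Borel $c$ one has $\int c\,d\gamma=\int(c\circ\Phi^{-1})\,d(\Phi_*\gamma)$, and for $c$ as in \eqref{separable cost increments} one computes $c\circ\Phi^{-1}(\tilde x,\tilde y)=c_1(\tilde x_1,\tilde y_1)+\sum_{t>1}c_t(\tilde x_t,\tilde y_t)$, a separable cost as in \eqref{separable cost}, still l.s.c.\ and bounded below; moreover $\phi_*\mu$ satisfies the product hypothesis \eqref{independence cond} precisely because $\mu$ has independent increments. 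Applying Theorem~\ref{main theorem} to the data $(\phi_*\mu,\phi_*\nu,c\circ\Phi^{-1})$ and transporting the conclusion back through $\Phi$ via (i)--(ii) yields that the values of \eqref{Pc} and \eqref{Pbc} coincide for $(\mu,\nu,c)$.

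For the second assertion I would use that the Knothe--Rosenblatt coupling is covariant under $\Phi$: if $(X^*,Y^*)$ is built from independent uniforms $U_1,\dots,U_N$ as in \eqref{quantile transforms} for $(\mu,\nu)$, then $\bigl(\phi(X^*),\phi(Y^*)\bigr)$ is the Knothe--Rosenblatt coupling of $(\phi_*\mu,\phi_*\nu)$. This follows by induction on $t$ from the observation that conditioning $\phi_*\nu$ on its first $t-1$ coordinates $(\tilde y_1,\dots,\tilde y_{t-1})$ amounts to conditioning $\nu$ on the partial sums $s_i=\sum_{j\le i}\tilde y_j$ and then translating the $t$-th coordinate by $-s_{t-1}$, so that $F^{-1}_{(\phi_*\nu)^{\tilde y_1,\dots,\tilde y_{t-1}}}(u)=F^{-1}_{\nu^{s_1,\dots,s_{t-1}}}(u)-s_{t-1}$ (and the same for $\mu$); feeding the telescoping identity $Y^*_t-Y^*_{t-1}=F^{-1}_{\nu^{Y^*_1,\dots,Y^*_{t-1}}}(U_t)-Y^*_{t-1}$ into \eqref{quantile transforms} for $(\phi_*\mu,\phi_*\nu)$ with the same $U_t$ closes the step. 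When $c_t(a,b)=c_t(a-b)$ with $c_t$ convex, the transformed cost is convex and separable, so Theorem~\ref{main theorem} gives that the Knothe--Rosenblatt coupling of $(\phi_*\mu,\phi_*\nu)$ solves the transformed \eqref{Pc}; pulling it back through $\Phi^{-1}$ and invoking the covariance above together with (i)--(ii) shows the Knothe--Rosenblatt coupling of $(\mu,\nu)$ solves \eqref{Pc}. Finally, ``each $\mu_i$ atomless'' here means the increment laws $(\phi_*\mu)_i$ are atomless, equivalently (by independent increments) that all conditionals of $\mu$ are atomless; then Theorem~\ref{main theorem} gives that the transformed coupling is induced by the Monge map \eqref{KRdef} for $(\phi_*\mu,\phi_*\nu)$, and conjugating that map by $\phi^{-1}(\cdot)\phi$ produces an adapted (hence causal) Monge map whose graph carries the Knothe--Rosenblatt coupling of $(\mu,\nu)$, so it coincides with \eqref{KRdef}.

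The change-of-variables bookkeeping is routine; the one point that genuinely needs care is the interaction of the triangular map $\phi$ with conditional distributions — both in checking that $\Phi_*$ preserves (bi)causality and in verifying the covariance of the Knothe--Rosenblatt coupling — since this is exactly where the lower-triangular structure of $\phi$, and for the product claim the independence of increments, enters. I expect this to be the main, though not severe, obstacle; the rest is a direct transcription of Theorem~\ref{main theorem}.
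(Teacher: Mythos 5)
Your proposal is correct and takes essentially the same approach as the paper, whose entire proof is the one-line observation that the map $(x,y)\mapsto(x_1,x_2-x_1,\dots,x_N-x_{N-1},y_1,y_2-y_1,\dots,y_N-y_{N-1})$ preserves causality and reduces the statement to Theorem \ref{main theorem}. You have simply filled in the details the paper leaves implicit (preservation of (bi)causality under the triangular homeomorphism, the transformation of the cost and of the product structure, and the covariance of the Knothe--Rosenblatt rearrangement), all of which check out.
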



In \cite[Lemma 5]{Lassalle} it was shown that the optimal solution to the causal transport problem in Wiener Space, with Cameron-Martin cost and Wiener measure as source, is bicausal. 
The results above give the causal/bicausal equality and existence as well as characterization of the Monge solution to what can be thought of as ``finite-dimensional projections'' of that problem. We can only guess then that causal/bicausal equality in continuous-time is a result of the Cameron-Martin cost being written in terms of ``speed'' and the source measure having independent increments. In Section \ref{Sec Examples} we give two counterexamples showing that if either independence or separability of the cost is dropped, the causal-bicausal equality may fail. 

One question that the above considerations and results leave open, is to what extent the Knothe-Rosenblatt rearrangement is canonical. This is answered in Section \ref{sec Bogashev}, where we show how it is characterized as the only \emph{increasing} transport (in a precise sense) which is also bicausal. In Section \ref{lex_section} we will further highlight the importance of this rearrangement by viewing it in light of a modern tenant of optimal transport theory. Inspired by the concepts of displacement interpolation/convexity (as in \cite[Chapter 5]{Villani}), we will define a related notion where the Knothe-Rosenblatt rearrangement replaces the role of the Brenier's map, and dub it  lexicographical displacement interpolation/convexity. 
We expect that the lexicographical displacement interpolations should have a geometric meaning as geodesic curves, however the materialization of this idea is left as an open problem, which we  consider as a relevant step towards an understanding of the geometrical side of the bicausal/nested transport problem. We only provide indirect evidences in this respect. Still, this also provides an interesting link to stochastic programming and nested distances. Concretely, we show that under convexity and Lipschitz conditions on the cost, stochastic programs are lexicographical displacement concave in analogy to the potential energy in optimal transport (\cite[Chapter 5.2]{Villani}).

We turn our attention to multistage stochastic programming, and introduce now our last main results. The goal here is to explore and to further the connection between (non-anticipative) multistage stochastic programming and bicausal optimal transport, discovered by Pflug and Pichler \cite{PflugPichler}, from the point of view of geometric/functional inequalities. We start by defining the value function of a stochastic program
\begin{equation}\label{def stoch prog}\textstyle
v(\eta):=\inf_{u_1(),\dots, u_N()} \int H(x_1,\dots,x_N,u_1(x_1),u_2(x_1,x_2),\dots,u_N(x_1,\dots,x_N))\eta(dx_1,\dots,dx_N) ,
\end{equation} 
as a function of the noise distribution $\eta$. Here $H:\R^N\times \R^N \to\R$ is the objective function and the minimization is taken over Borel adapted controls. As established \mbox{in \cite[Theorem 6.1]{PflugPichler},} as soon as $H$ is $1$-Lipschitz in its first argument and convex in the second one, the discrepancy $|v(\mu)-v(\nu)|$ is less than the bicausal distance between $\mu$ and $\nu$ w.r.t.\ the cost $$\textstyle c(x,y):=\|x-y\|_1=\sum_i|x_i-y_i|.$$ This means that if say $\mu$ is a ``complicated law'' (for instance given by a nightmarish tree), then finding a ``simpler law'' $\nu$ close enough in the causal distance sense guarantees that uniformly in the given class of stochastic programs the discrepancy is controlled. Therefore, the practical question is how to efficiently minimize this bicausal distance over a given set of measures. We argue that in many situations a transport-information inequality for such bicausal distances permits to gauge the discrepancy of stochastic programs replacing the computation of such distance by the evaluation of a relative entropy. Notice that in our interpretation, the complicated measure $\mu$ is expected to dominate (in the sense of absolute continuity) the simpler measure $\nu$.  Our result is reminiscent of \cite[Proposition 4.6]{PflugPichlerbook}; crucially the presence of the entropy here stems from our assumption (EXP). 

\begin{theorem}\label{thm: funct nested}
Let $\mu\in \mathcal{P}(\mathbb{R}^N)$ satisfy:
\begin{itemize}
\item[(EXP)] for each $t$ there exist $a_t>0,\lambda_t\in\R$ such that $\textstyle\int e^{a_tx_t^2}\mu^{x_1,\dots ,x_{t-1}}(dx_t)\leq e^{\lambda_t}$ $\mu$-a.s.
\item[(LIP)] There is $C>0$ such that for all $t\in\{1,\dots,N\}$, and every $1$-Lipschitz function $f:\R \to\R$, the function
$$\textstyle (x_1,\dots,x_{t-1})\mapsto \int f(x_t)\mu^{x_1,\dots,x_{t-1}}(dx_t),$$
is $\mu$-a.s.\ equal to a $C$-Lipschitz function.
\end{itemize}
Then we have the following bicausal transport-information inequality: 
\begin{equation}\label{eq: T1bc}\textstyle
\mbox{for all }\nu\in \mathcal{P}(\mathbb{R}^N) :\,\,\,\,\mathcal{W}_{1,bc}(\mu,\nu):=\inf_{\gamma\in \Pi_{bc}(\mu,\nu)}\int \|x-y\|_1 d\gamma(x,y)\leq K\sqrt{Ent(\nu|\mu)},
\end{equation}
where $Ent(\cdot|\mu)$ denotes the relative entropy with respect to $\mu$ and $$\textstyle K:=\sqrt{2\sum\limits_{j<N}(1+C)^{2j} \frac{(1+\lambda_{N-j})}{a^2_{N-j}}}.$$ In particular, for every ``cost criterion'' $H:\R^N\times \R^N \to\R$ bounded from below, $r$-Lipschitz in its first argument and convex in its second, we have for the corresponding stochastic programs
\begin{equation}\label{eq ent upp}\textstyle
|v(\mu)-v(\nu)| \leq rK\sqrt{Ent(\nu|\mu)},
\end{equation}
with $v(\cdot)$ defined as in \eqref{def stoch prog}.
\end{theorem}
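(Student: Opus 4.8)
The plan is to sidestep the full causal DPP and instead exhibit an explicit \emph{bicausal} coupling whose cost telescopes into a chain of one‑dimensional transport problems, apply at each link a scalar transport--entropy inequality coming from (EXP), and reassemble everything via the tensorization of relative entropy and a Cauchy--Schwarz step. One may assume $\nu\ll\mu$, since otherwise $Ent(\nu|\mu)=+\infty$ and there is nothing to prove; then, $\nu$‑a.s., $\nu^{y_1,\dots,y_{t-1}}\ll\mu^{y_1,\dots,y_{t-1}}$ and $\int e^{a_tx_t^2}\mu^{y_1,\dots,y_{t-1}}(dx_t)\le e^{\lambda_t}$ by (EXP). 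Write $X_{<t}:=(X_1,\dots,X_{t-1})$ and likewise $Y_{<t}$, so that $\mu^{X_{<t}}$ denotes the scalar conditional law of coordinate $t$.

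\emph{Step 1: a bicausal coupling.} I would build $\gamma\in\Pi_{bc}(\mu,\nu)$ by prescribing the successive kernels in \eqref{successivedisintegration} to be, given the past $(x_1,\dots,x_{t-1},y_1,\dots,y_{t-1})$, the monotone ($W_1$‑optimal) coupling of the scalar measures $\mu^{x_1,\dots,x_{t-1}}(dx_t)$ and $\nu^{y_1,\dots,y_{t-1}}(dy_t)$; measurability in the past is automatic because these kernels are read off the quantile maps $F^{-1}_{\mu^{\cdots}},F^{-1}_{\nu^{\cdots}}$. They have $x$‑marginal $\mu^{x_1,\dots,x_{t-1}}$ and $y$‑marginal $\nu^{y_1,\dots,y_{t-1}}$, so Proposition \ref{prop:equiv charact}, applied to $\gamma$ and to $e_*\gamma$, gives $\gamma\in\Pi_{bc}(\mu,\nu)$; moreover $\mathbb{E}_\gamma[|X_t-Y_t| \mid X_{<t},Y_{<t}]=W_1(\mu^{X_{<t}},\nu^{Y_{<t}})$. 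Setting $D_t:=\mathbb{E}_\gamma|X_t-Y_t|=\mathbb{E}_\gamma[W_1(\mu^{X_{<t}},\nu^{Y_{<t}})]$ we obtain
\[
\mathcal{W}_{1,bc}(\mu,\nu)\ \le\ \int\|x-y\|_1\,d\gamma\ =\ \textstyle\sum_{t\le N}D_t .
\]

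\emph{Step 2: propagating (LIP); Step 3: the scalar bound.} Split $W_1(\mu^{X_{<t}},\nu^{Y_{<t}})\le W_1(\mu^{X_{<t}},\mu^{Y_{<t}})+W_1(\mu^{Y_{<t}},\nu^{Y_{<t}})$; by Kantorovich--Rubinstein duality and (LIP) the first summand is $\le C\sum_{s<t}|X_s-Y_s|$. Taking $\mathbb{E}_\gamma$ and writing $R_t:=\mathbb{E}_\nu[W_1(\mu^{Y_{<t}},\nu^{Y_{<t}})]$ this yields the recursion $D_t\le C\sum_{s<t}D_s+R_t$, so that $S_t:=\sum_{s\le t}D_s$ satisfies $S_t\le(1+C)S_{t-1}+R_t$ with $S_0=0$, whence $\mathcal{W}_{1,bc}(\mu,\nu)\le S_N\le\sum_{t\le N}(1+C)^{N-t}R_t$. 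Now (EXP) feeds in the one‑dimensional transport--entropy inequality: for a probability $m$ on $\R$ with $\int e^{ay^2}dm\le e^\lambda$ one has $W_1(m,n)\le\sqrt{2(1+\lambda)/a^2}\,\sqrt{Ent(n|m)}$ for every $n\ll m$, which follows from the Bobkov--G\"otze criterion by symmetrising a centred $1$‑Lipschitz test function $f$ and using $s|y|\le\tfrac{s^2}{4a}+ay^2$. Applying it with $m=\mu^{Y_{<t}}$, $n=\nu^{Y_{<t}}$, then Jensen ($\mathbb{E}_\nu\sqrt{\,\cdot\,}\le\sqrt{\mathbb{E}_\nu\,\cdot\,}$) and the chain rule $\sum_{t\le N}\mathbb{E}_\nu[Ent(\nu^{Y_{<t}}|\mu^{Y_{<t}})]=Ent(\nu|\mu)$, a Cauchy--Schwarz step over $t$ delivers
\[
\mathcal{W}_{1,bc}(\mu,\nu)\ \le\ \sqrt{2\textstyle\sum_{t\le N}(1+C)^{2(N-t)}\tfrac{1+\lambda_t}{a_t^2}}\ \sqrt{Ent(\nu|\mu)}\ =\ K\sqrt{Ent(\nu|\mu)},
\]
which is \eqref{eq: T1bc} after substituting $j=N-t$. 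Finally \eqref{eq ent upp} follows from \cite[Theorem 6.1]{PflugPichler}: since $H/r$ is $1$‑Lipschitz in its first and convex in its second argument, $|v(\mu)-v(\nu)|=r\,|v_{H/r}(\mu)-v_{H/r}(\nu)|\le r\,\mathcal{W}_{1,bc}(\mu,\nu)\le rK\sqrt{Ent(\nu|\mu)}$.

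\emph{Main obstacle.} The delicate step is the passage ``by (LIP), $W_1(\mu^{X_{<t}},\mu^{Y_{<t}})\le C\sum_{s<t}|X_s-Y_s|$''. As stated, (LIP) provides, for each fixed $1$‑Lipschitz $f$, a $C$‑Lipschitz \emph{version} of $(x_1,\dots,x_{t-1})\mapsto\int f\,d\mu^{x_1,\dots,x_{t-1}}$; this must be promoted to a \emph{single} exceptional null set outside which $W_1(\mu^{x_{<t}},\mu^{x'_{<t}})\le C\|x_{<t}-x'_{<t}\|_1$ holds for all prefixes at once, which is handled by restricting to a countable $\|\cdot\|_\infty$‑dense family of compactly supported $1$‑Lipschitz functions and unioning the corresponding null sets. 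One must also justify the measurable selection of the monotone kernels in Step 1. A secondary, essentially cosmetic issue is that matching the constant of the scalar inequality in Step 3 exactly to $\tfrac{1+\lambda_t}{a_t^2}$ may require being slightly wasteful; any such loss only inflates $K$ by an absolute factor.
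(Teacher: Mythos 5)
Your proposal is correct and follows essentially the same route as the paper: one-step conditional $\mathcal{W}_1$ bounds via the triangle inequality and (LIP), the scalar $T_1$ inequality supplied by (EXP), and reassembly through Cauchy--Schwarz and the additivity of relative entropy, arriving at the same constant $K$. The only cosmetic difference is that you justify the telescoping decomposition by exhibiting the explicit conditional-monotone bicausal coupling instead of invoking the bicausal DPP (Proposition \ref{prop:dynprog}), and the technical concerns you flag (a single null set for (LIP), measurable selection of the monotone kernels, the exact scalar constant) are genuine but minor --- the last is settled by citing \cite[Theorem 22.10]{Villani_Old} directly, as the paper does.
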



\begin{remark} A few observations are in order:
\begin{enumerate}
\item {The entropic upper bounds in \eqref{eq: T1bc}-\eqref{eq ent upp} of course trivialize, becoming $+\infty$, if $\nu\not\ll\mu$. On the other hand, one could expect the r.h.s.\ of \eqref{eq ent upp} to be easier to compute (when finite) than a transport-type quantity. }
\item W.l.o.g.\ the Lipschitz property above is meant with respect to the sum-of-absolute-values norms in the respective spaces. By the tower property $(EXP)$ implies $$\textstyle\int e^{a_1x_1^2+\cdot+ a_Nx_N^2}\mu(dx_1,\dots ,d x_N)\leq e^{\lambda_1+\dots+\lambda_N}<\infty,$$
and in particular every Lipschitz function is $\mu$-integrable.
\item We stress that Assumption $(EXP)$ is reasonable in practice; it is automatically satisfied in the finite discrete case, for empirical measures, or when $\mu$ has bounded support. The corresponding non-causal version of \eqref{eq: T1bc}, which can be found in \cite[Theorem 22.10]{Villani_Old} or \cite[Theorem 2.3]{transport_markov} and crucially does not implies ours, actually also needs a condition equivalent to $(EXP)$.
\item Assumption $(LIP)$ implies, by the Kantorovich-Rubinstein Theorem (e.g.\ \cite[Theorem 1.14]{Villani}) that
$$\textstyle\mathcal{W}_1(\mu^{x_1,\dots,x_{t-1}},\mu^{z_1,\dots,z_{t-1}})\leq C\sum_{i<t} |x_i-z_i|,$$
where $\mathcal{W}_1$ is the usual 1-Wasserstein distance on $\mathcal{P}(R)$. If $\mu$ is a Markov law, then the r.h.s.\ here can be taken as  $C |x_{t-1}-z_{t-1}|$. On the other hand, if $\mu$ is a martingale law, the l.h.s.\ is in turn bounded from below by $ |x_{t-1}-z_{t-1}|$.
\item All in all, Assumption $(LIP)$ is the most fundamental for Theorem \ref{thm: funct nested}. It is however clearly satisfied in the discrete case or for empirical measures. In general it is also implied if $\mu$ has independent increments (with $C=1$ then), or if e.g.\ $\mu$ is the law of the solution ${X_1,\dots,X_N}$ of a uniformly Lipschitz random dynamical system of the form $X_{t+1}=R(Z_t,(X_1,\dots,X_t),t)$, where $R$ is Borel in the first argument, Lipschitz in the second one (uniformly w.r.t.\ the first one), and $Z_t$ is independent of $(X_1,\dots,X_t)$.
\end{enumerate}
\end{remark}

The proof of the previous theorem is given in Section \ref{lex_section}. We observe that the constant in \eqref{eq: T1bc} can be very plausibly improved (making it of order $\sqrt{N}$ is probably the best possible). This is done for example in \cite[Theorem 2.5]{transport_markov} for the non-causal but Markov case, and necessitates more care and further assumptions.


\section{Duality}\label{Duality}
The dual to the causal transport problem is discussed in this section. 
First, we give the proof of Proposition \ref{prop:equiv charact}.

\begin{proof}[Proof of Proposition \ref{prop:equiv charact}]$ $

STEP 1: Equivalence\ between Points 1 and 3:

Denote $f^h(x_1,\dots,x_N): = \int h_t(y_1,\dots,y_t)  \gamma^{x_1,\dots,x_N}(dy_1,\dots,dy_t)$ with $h_t \in C(\R^t)$. By definition \\ $\gamma \in \Pi_c(\mu,\nu)$ if and only if for all $t\leq N$ and all such $f^h$ we have
\begin{align}\notag\textstyle
f^h(x_1,\dots,x_N) = \int f^h(x_1,\dots,x_t,\bar{x}_{t+1},\dots,\bar{x}_N)  \mu^{x_1,\dots,x_t} (d{{x}_{t+1}}, \dots, d{x}_{N}),
\end{align}
which is equivalent to the following:
\begin{align*}\textstyle
\int g(x_1,\dots,x_N)\left[ f^h(x_1,\dots,x_N) -  \int f^h(x_1,\dots,x_t,{x}_{t+1},\dots,{x}_N)  \mu^{x_1,\dots,x_t}(d{{x}_{t+1}}, \dots, d{x}_{N}) \right] d\mu= 0,
\end{align*}
for every function $g \in C_b(\R^N)$ and for all $t\leq N$. The fact we can take the $g$'s continuous and not merely Borel bounded comes from the fact that $\mu$ is a Borel finite measure on a Polish space. It is easy to see that the previous equation is equivalent to 
\begin{align*}\textstyle
\int f^h(x_1,\dots,x_N)  \left[ g(x_1,\dots,x_N) -\int g(x_1,\dots,x_t,{x}_{t+1},\dots,{x}_N)  \mu^{x_1,\dots,x_t}(d{{x}_{t+1}}, \dots, d{x}_{N})\right ] d\mu= 0.
\end{align*}
Finally, by the tower property of conditional expectations the latter is equivalent to: 
\begin{multline*}\textstyle
\int h_t(y_1,\dots,y_t)\left [ g_t(x_1,\dots,x_{N})-  \int\Big.  g_t(x_1,\dots,x_t,{x}_{t+1},\dots,{x}_N)\mu^{x_1,\dots,x_t}(d{x}_{t+1},\dots,d{x}_{N}) \right ]d\gamma =0. 
\end{multline*}
STEP 2: Equivalence between Points 1 and {2}:

Let $\gamma\in\Pi(\mu,\nu)$ be decomposed as in \eqref{successivedisintegration}.
%
{
It is causal if and only of for any time $ {t  \leq N}$,
$\gamma^{x_1,\dots,x_t,y_1,\dots,y_t}(dx_1,\dots,dx_N) = \gamma^{x_1,\dots,x_t}(dx_1,\dots,dx_N)$
%
(see \cite[Proposition 6.6]{Kallbook}).
Since the  $x$-marginal of $\gamma$ is $\mu$, these facts imply \eqref{eq: marg mu}. On the other hand, the $y$-marginal of $\gamma$ is $\nu$,  so \eqref{eq: marg nu} directly follows. 
For the converse direction, it is enough to verify \eqref{eq:test caus} for any $t=1, \dots, N-1$. 
Since the functions $h_t$ in \eqref{eq:test caus} depend only on $y_1,\dots,y_t$, the latter can be computed as
\vspace{-10pt}\begin{multline*}\textstyle
\int  h_{t}(y_1,\dots,y_{t}) \left[ g_t(x_1,\dots,x_{N}) - \int g_t(x_1,\dots,x_{t}, {x}_{t+1},\dots,{x}_{N}) \mu^{x_1,\dots,x_{t}}(d{x}_{t+1},\dots, d{x}_N) \right]  \\ 
\mu^{x_1,\dots,x_{N-1}}(dx_N) \dots \mu^{x_1,\dots,x_{t}}(dx_{t+1})\gamma^{x_1,\dots,x_{t-1},y_1,\dots,y_{t-1}}(dx_{t},dy_{t}) \dots \gamma^{x_1,y_1}(dx_2,dy_2) \bar{\gamma}(dx_1,dy_1) ,
\end{multline*}
which is zero as desired because of
%
%
$ \mu^{x_1,\dots,x_{N-1}}(dx_N) \dots \mu^{x_1,\dots,x_{t}}(dx_{t+1}) = \mu^{x_1,\dots,x_{t}}(dx_{t+1}, \dots, dx_N) $
(disintegration property). 
}

STEP 3: Equivalence between Points 3 and 4:

Evidently in Point 3 we could have taken $h_t$ and $g_t$ Borel bounded, as STEP 1 suggests. Choosing then $g_t=M_{t+1}$ and $h_t=H_t$ for each $t<N$, and summing up, proves Point 4 from Point 3. Conversely, given {$t$, }$h_t$ and $g_t$ we build $H_s= h_t {\bf{1}}_{s\geq t}$ and $M_s = \int g_t(x_1,\dots,x_s,x_{s+1},\dots,x_N)\mu^{x_1,\dots,x_s}(dx_{s+1},\dots,dx_N)$ and conclude by telescopic sum {over $s$}.
\end{proof}

We now proceed to the duality and attainment questions. First:

\begin{lemma}\label{lem:quantcausalcontinuous}
Suppose Assumption \ref{as:weakcont} holds. For $g\in C_b(\R^N)$, the functions
$$\textstyle (x_1,\dots,x_N)\mapsto g(x_1,\dots,x_{N}) - \int g(x_1,\dots,x_t,{x}_{t+1},\dots,{x}_N)\mu^{x_1,\dots,x_t}(d{x}_{t+1},\dots,d{x}_{N}),$$
belong themselves to $C_b(\R^N)$.


\end{lemma}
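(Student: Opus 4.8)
The plan is to peel off the trivial part and concentrate on the integral term. The map $(x_1,\dots,x_N)\mapsto g(x_1,\dots,x_N)$ is in $C_b$ by hypothesis, so it suffices to show that
$$G_t(x_1,\dots,x_N):=\int g(x_1,\dots,x_t,\bar x_{t+1},\dots,\bar x_N)\,\mu^{x_1,\dots,x_t}(d\bar x_{t+1},\dots,d\bar x_N)\in C_b.$$
Boundedness is immediate since $|G_t|\le\|g\|_\infty$. For $t=N$ the statement is vacuous ($G_N=g$ and the difference is $0$), so fix $t<N$. Since $G_t$ depends on $(x_1,\dots,x_N)$ only through $(x_1,\dots,x_t)$, and since $(x_1,\dots,x_N)\mapsto(x_1,\dots,x_t)$ is continuous (and maps $supp(\mu)$ into the support of the $t$-th marginal projection of $\mu$, which is where the kernel $\mu^{x_1,\dots,x_t}$ is defined), the claim reduces to continuity of
$$(x_1,\dots,x_t)\longmapsto \int g(x_1,\dots,x_t,\,\cdot\,)\,d\mu^{x_1,\dots,x_t}$$
on that domain.

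To handle this I would first record an elementary fact about weak convergence: if $\nu_n\to\nu$ weakly in $\mathcal P(\R^{N-t})$ and $f_n\in C_b(\R^{N-t})$ satisfy $\sup_n\|f_n\|_\infty<\infty$ and $f_n\to f$ uniformly on compact sets for some $f\in C_b(\R^{N-t})$, then $\int f_n\,d\nu_n\to\int f\,d\nu$. The proof is the standard two-term split $\int f_n\,d\nu_n-\int f\,d\nu=\int(f_n-f)\,d\nu_n+\bigl(\int f\,d\nu_n-\int f\,d\nu\bigr)$: the second term vanishes because $f$ is a fixed bounded continuous function and $\nu_n\to\nu$; for the first, Prokhorov's theorem yields tightness of $\{\nu_n\}$, so given $\varepsilon>0$ one picks a compact $K$ with $\sup_n\nu_n(K^c)<\varepsilon$, bounds $\bigl|\int(f_n-f)\,d\nu_n\bigr|\le\sup_{K}|f_n-f|+2\bigl(\sup_n\|f_n\|_\infty\bigr)\varepsilon$, and lets $n\to\infty$ and then $\varepsilon\to0$.

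Finally I would feed Assumption \ref{as:weakcont} into this fact. Take $(x_1^n,\dots,x_t^n)\to(x_1,\dots,x_t)$ in the relevant domain; by Assumption \ref{as:weakcont} the kernels $\nu_n:=\mu^{x_1^n,\dots,x_t^n}$ converge weakly to $\nu:=\mu^{x_1,\dots,x_t}$. Set $f_n:=g(x_1^n,\dots,x_t^n,\cdot)$ and $f:=g(x_1,\dots,x_t,\cdot)$; then $\|f_n\|_\infty\le\|g\|_\infty$ uniformly in $n$, and since $g$ is uniformly continuous on any compact set of the form $K'\times K$ (with $K'$ a compact set containing the convergent sequence $(x_1^n,\dots,x_t^n)$ and its limit), one gets $\sup_{K}|f_n-f|\to0$ for every compact $K$, i.e. $f_n\to f$ uniformly on compacts. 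The elementary fact then gives $G_t(x^n_1,\dots,x^n_t,\cdot)\to G_t(x_1,\dots,x_t,\cdot)$, which is the desired continuity, completing the proof.

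The only real obstacle is the passage to the limit under the integral when the measure \emph{and} the integrand both vary; this is precisely what the uniform-boundedness $+$ tightness $+$ uniform-on-compacts argument is designed to handle, so there is no genuine difficulty, but it is the heart of the matter and should be written out with care. A secondary point worth a sentence is the interpretation of ``$C_b(\R^N)$'' and of the domain ``$supp(\mu)\cap\R^t$'' under the standing convention $\X=supp(\mu)$: continuity is really being asserted on $\X$ and factors through the continuous projection onto the first $t$ coordinates, so no Tietze-type extension issue arises.
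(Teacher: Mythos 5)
Your proposal is correct and follows essentially the same route as the paper: reduce to continuity of the integral term, use Assumption \ref{as:weakcont} to get weak convergence (hence tightness) of the kernels along a convergent sequence, and combine uniform continuity of $g$ on a compact product set with the tightness bound and the weak convergence of the measures — the paper's three-term split $\alpha+\beta+\gamma$ is exactly your two-term split with the first term cut over $K$ and $K^c$. Nothing is missing.
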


\begin{proof}
It is suffices to check the continuity of 
$$\textstyle (x_1,\dots,x_t)\mapsto  \int g(x_1,\dots,x_t,{x}_{t+1},\dots,{x}_N)\mu^{x_1,\dots,x_t}(d{x}_{t+1},\dots,d{x}_{N}).$$
Let $x^n:=(x_1^n,\dots,x^n_t)$ converge to $y:=(y_1,\dots,y_t)$, thus we may assume all $x^n$ belongs to a fixed ball $B$ around $y^n$. Let us fix an arbitrary $\varepsilon >0$. By assumption $\mu^{x^n}$ converges weakly to $\mu^y$ and so in particular $\{\mu^{x^n}\}_{n\in\mathbb{N}}$ is tight. Thus we may find a compact in $\R^{N-t}$ such that $\sup_n\mu^{x^n}(K^c)\leq \varepsilon/3$. For large enough $n_0$ we also have that $\sup_{n>n_0,z\in K}|g(x^n,z)-g(y,z)|\leq \varepsilon/3$, since $g$ restricted to $B\times K$ must be uniformly continuous. We then write:
\begin{align*}\textstyle
\left|\int g(x^n,z)\mu^{x^n}(dz)-\int g(y,z)\mu^{y}(dz)\right|\leq \alpha + \beta + \gamma,
\end{align*}
with $$\textstyle \alpha = \left|\int\limits_K [g(x^n,z)- g(y,z)]\mu^{x^n}(dz)\right|, \beta = \left|\int\limits_{K^c} [g(x^n,z)- g(y,z)]\mu^{x^n}(dz)\right|, \gamma=\left|\int  g(y,z) [\mu^{x^n}(dz)- \mu^{y}(dz)]\right| .$$ We easily see that each term is smaller that $\varepsilon/3$ for $n$ large enough.
\end{proof}

\medskip

\begin{proof}[Proof of Theorem \ref{teo:duality}] By Proposition \ref{prop:equiv charact}, item $3$, we know that the set $\Pi_c(\mu,\nu)$ is the intersection of the compact $\Pi(\mu,\nu)$ with all the subspaces defined by the functions in \eqref{set of test functions}, each of them closed owing to Lemma \ref{lem:quantcausalcontinuous}.  Thus $\Pi_c(\mu,\nu)$ is compact and primal attainment follows easily. If the cost function belongs to $ C_b(\R^N\times\R^N)$, this theorem can be seen as a particular case of the Monge-Kantorovich problem with additional linear constraints considered in \cite[Theorem 2.5]{Zaev}, 
where we just have to show that $\bbF \subset C_b(\R^N\times\R^N)$;
%
but this is clear again by Lemma \ref{lem:quantcausalcontinuous}. On the other hand, when working with $\mathbb{S}\subset C_b(\R^N\times\R^N)$, it is easy to see that the same lemma implies that the martingales $M$ can be assumed continuous in the context of Proposition \ref{prop:equiv charact} and its proof, so again \cite[Theorem 2.5]{Zaev} establishes our stated result.
%
Finally duality for lower semicontinuous cost functions is achieved by the usual approximation arguments in \cite[Chapter 1]{Villani}, owing to the compactness of the set of all causal plans.
\end{proof}
\medskip

The analogue of Theorem \ref{teo:duality} is obtained for the bicausal setting. As in the causal case, we define the set
\begin{equation}\textstyle
\resizebox{.81\hsize}{!}{$
\bbF'  := 
\left\{
\begin{array}{c}
 F:\R^N\times\R^N\to\R \mbox{ s.t. } F(x_1,\dots,x_N,y_1,\dots,y_N) =  \\
\sum\limits_{t<N} h_t(y_1,\dots,y_t)\left [g_t(x_1,\dots,x_{N}) - \int g_t(x_1,\dots,x_t,{x}_{t+1},\dots, {x}_N)\mu^{x_1,\dots,x_t}(d{x}_{t+1},\dots,d{x}_N) \right ] +  \\
\sum\limits_{t<N} h'_t(x_1,\dots,x_t)\left [g'_t(y_1,\dots,y_{N}) - \int g'_t(y_1,\dots,y_t,{y}_{t+1},\dots, {y}_N)\nu^{y_1,\dots,y_t}(d{y}_{t+1},\dots,d{y}_N) \right ] ,  \\
\mbox{with}\,\, h_t, h'_t\in C_b(\R^t),\, \,g_t, g'_t\in C_b(\R^{N}) \,\,\mbox{for all }t<N  
\end{array}
\right\}. $}
\end{equation}

The following strengthened version of Assumption \ref{as:weakcont} will be needed:

\begin{assumption}\label{as:biweakcont}
Both $\mu$ and $\nu$ are successively weakly continuous.
\end{assumption}

\begin{corollary}
\label{coro:basic}
Suppose that $c:\R^N\times\R^N\to\R$ is lower semicontinuous and bounded from below, and that Assumption \ref{as:biweakcont} holds. Then there is no duality gap:
\begin{align*} \label{Dbc}\textstyle
\inf\limits_{\gamma \in \Pi_{bc}(\mu,\nu)} \int c d\gamma 
&=\sup\limits_{\substack{\Phi,\Psi\in C_b(\R^N),  F'\in\bbF' \\\Phi  \oplus \Psi  \leq c  +F'}} \textstyle \left[ \int \Phi d\mu + \int \Psi d\nu\right] . \tag{Dbc}
\end{align*}
Moreover, the infimum in the l.h.s.\ is attained. 
\end{corollary}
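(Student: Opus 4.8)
The plan is to mirror the proof of Theorem~\ref{teo:duality} as closely as possible, replacing the causality constraint by the pair of constraints encoding bicausality. First I would establish the bicausal analogue of Proposition~\ref{prop:equiv charact}: a plan $\gamma\in\Pi(\mu,\nu)$ is bicausal if and only if, in addition to the test-function identities defining membership in $\bbF$ (causality from $\mu$ to $\nu$), the symmetric identities with the roles of $x$ and $y$ swapped hold (causality from $\nu$ to $\mu$, using $e(x,y)=(y,x)$ and $e_*\gamma\in\Pi_c(\nu,\mu)$). This is immediate from Definition of $\Pi_{bc}$ together with Proposition~\ref{prop:equiv charact} applied to both $\gamma$ and $e_*\gamma$; the set of test functions so obtained is precisely $\bbF'$. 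Consequently $\Pi_{bc}(\mu,\nu)$ is the intersection of $\Pi(\mu,\nu)$ with the collection of closed affine subspaces $\{\gamma:\int F\,d\gamma=0\}$, $F\in\bbF'$.

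Next I would check that $\bbF'\subset C_b(\R^N\times\R^N)$. Each summand of the first type is continuous and bounded by Lemma~\ref{lem:quantcausalcontinuous} applied to $\mu$, which is legitimate because $\mu$ is successively weakly continuous under Assumption~\ref{as:biweakcont}; each summand of the second type is continuous and bounded by the same lemma applied to $\nu$, which is now also successively weakly continuous. Hence each functional $\gamma\mapsto\int F\,d\gamma$, $F\in\bbF'$, is weak-$*$ continuous on $\mathcal{P}(\X\times\Y)$, so its zero set is closed; since $\Pi(\mu,\nu)$ is weak-$*$ compact (marginals being fixed and $\X,\Y$ closed in $\R^N$), $\Pi_{bc}(\mu,\nu)$ is a closed subset of a compact set, hence compact and nonempty (it contains $\mu\otimes\nu$). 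Primal attainment for the lower semicontinuous, bounded-below cost $c$ then follows from the standard lower-semicontinuity-plus-compactness argument, e.g.\ as in \cite[Chapter~1]{Villani}.

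For the no-duality-gap statement I would invoke \cite[Theorem~2.5]{Zaev} on Monge--Kantorovich problems with additional (infinitely many) linear constraints, exactly as in the proof of Theorem~\ref{teo:duality}: when $c\in C_b(\R^N\times\R^N)$ the constraint functionals are the pairings against the family $\bbF'\subset C_b$, and Zaev's duality yields
$$\textstyle\inf_{\gamma\in\Pi_{bc}(\mu,\nu)}\int c\,d\gamma=\sup_{\substack{\Phi,\Psi\in C_b(\R^N),\,F'\in\bbF'\\ \Phi\oplus\Psi\le c+F'}}\Bigl[\int\Phi\,d\mu+\int\Psi\,d\nu\Bigr].$$
The passage from $c\in C_b$ to a general lower semicontinuous, bounded-below $c$ is handled by the usual approximation from below by an increasing sequence of bounded continuous functions, using the compactness of $\Pi_{bc}(\mu,\nu)$ to pass to the limit on the primal side (and monotone convergence on the dual side). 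This is identical in structure to the final paragraph of the proof of Theorem~\ref{teo:duality}.

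I expect the only genuine point requiring care — and hence the ``main obstacle'', though a mild one — to be the verification that $\bbF'\subset C_b$, i.e.\ that Lemma~\ref{lem:quantcausalcontinuous} genuinely applies to the second family of summands. This is where Assumption~\ref{as:biweakcont} (as opposed to the weaker Assumption~\ref{as:weakcont}) is essential: the $\nu$-conditional kernels must depend weakly continuously on the conditioning variables for the functions $g'_t(y_1,\dots,y_N)-\int g'_t(\cdots)\nu^{y_1,\dots,y_t}(\cdots)$ to be continuous. Everything else is a routine transcription of the causal case. One may additionally remark, as in the causal setting, that the dual can equivalently be taken over the martingale-type test class (the symmetrized analogue of $\mathbb{S}$), since Lemma~\ref{lem:quantcausalcontinuous} again lets one take the martingales continuous; I would mention this in passing but not belabor it.
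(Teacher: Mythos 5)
Your proposal is correct and is exactly the argument the paper has in mind: the paper explicitly omits the proof of Corollary \ref{coro:basic} as being the obvious transcription of the proof of Theorem \ref{teo:duality}, and your write-up supplies precisely that transcription (bicausal test-function characterization via Proposition \ref{prop:equiv charact} applied to $\gamma$ and $e_*\gamma$, continuity of $\bbF'$ via Lemma \ref{lem:quantcausalcontinuous} applied to both $\mu$ and $\nu$ under Assumption \ref{as:biweakcont}, compactness of $\Pi_{bc}(\mu,\nu)$, Zaev's constrained duality, and the usual approximation for l.s.c.\ costs). You also correctly identify why the strengthened Assumption \ref{as:biweakcont} is the one point where the bicausal case genuinely differs from the causal one.
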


We omit the obvious formulation with discrete stochastic integrals, as well as the proof.

 
\begin{remark}\label{rem:first_Monge}
We have observed that a measurable $T:\R^N\to \R^N$ is causal if and only if it is adapted, in the sense that $\mu$-a.s.\ $T(x_1,\dots,x_N)\,\,=\,\, (T^1(x_1),T^2(x_1,x_2),\dots,T^N(x_1,\dots,x_N))$ for measurable $T^i:\R^t\to\R$. Bicausality is more subtle, but clearly holds if for instance $T$ admits a measurable $\mu$-a.s.\ left inverse which is also adapted.
\end{remark}

\section{Dynamic programming principle}\label{DPP causal case}

Let $\gamma\in\Pi(\R^N,\R^N)$. It is then possible to decompose $\gamma$, uniquely in a suitable way, by successive disintegration first w.r.t.\ $(x_1,\dots,x_{N-1},y_1,\dots,y_{N-1})$, then w.r.t.\ $(x_1,\dots,x_{N-2},$ $y_1,\dots,y_{N-2})$ and so forth in a recursive way, obtaining
\begin{equation}\textstyle
\resizebox{.81\hsize}{!}{$
\gamma(dx_1,\dots,dx_N,dy_1,\dots,dy_N)=\bar{\gamma}(dx_1,dy_1)\gamma^{x_1,y_1}(dx_2,dy_2)\dots \gamma^{x_1,\dots,x_{N-1},y_1,\dots,y_{N-1}}(dx_N,dy_N), $}
\end{equation}
so that each $\gamma^{x_1,\dots,x_{t},y_1,\dots,y_{t}}(dx_{t+1},dy_{t+1})$ is a regular conditional probability or kernel (see \cite[Theorem 10.4.14, Corollary 10.4.17]{Bogachev} and \cite[Lemma 1.41 ]{Kallbook}). We will freely perform concatenation of these objects or further decompose them into smaller kernels, as justified in \cite[Chapter 7.4.3]{BertsekasShreve}. The characterization of causal plans through the kernels was given in {Point 2 of} Proposition \ref{prop:equiv charact}. 

We believe that generally \eqref{Pc} does not allow for a meaningful and useful ``factorized'' or ``recursive'' formulation, along the lines of what a dynamic programming principle (DPP) ought to be. However, we show first that the causal quasi-Markov transport plans we will introduce, and their associated optimal transport problem, do possess a DPP. This fact (Theorem \ref{thm:markovDPP}) together with Proposition \ref{prop:markov=cuasal} will then allow us to prove Theorem \ref{thm:DPPcausal}.

\begin{definition}
A causal transport plan $\gamma\in\Pi_c(\mu,\nu)$ is called \textit{causal quasi-Markov}, which we denote by $\gamma\in\Pi_{cqm}(\mu,\nu)$, if $\gamma^{x_1,\dots,x_t,y_1,\dots,y_t}(dx_{t+1},dy_{t+1})=\gamma^{x_t,y_1,\dots,y_t}(dx_{t+1},dy_{t+1})$ for every $t$.
\end{definition}

Of course $\Pi_{cqm}(\mu,\nu)\neq\emptyset $ iff $ \mu$ is a Markov measure. {To wit, if $\gamma\in \Pi_{cqm}(\mu,\nu)$ then $\gamma^{x_t,y_1,\dots,y_t}(dx_{t+1})=\mu^{x_1,\dots,x_t}(dx_{t+1})$, which implies the Markov property of $\mu$. Conversely, if this property holds, one can check that the independent coupling of $\mu$ and $\nu$ belongs to $\Pi_{cqm}(\mu,\nu)$. } In either case,  $\textstyle \gamma^{x_1,\dots,x_t,y_1,\dots,y_t}(dx_{t+1})=\mu^{x_t}(dx_{t+1})$ must be satisfied. 
%
We introduce the causal quasi-Markov transport problem:
\begin{equation}
\label{Pcqm}\tag{Pcqm}\textstyle
\inf\limits_{\gamma\in \Pi_{cqm}(\mu,\nu)}\int cd\gamma
\end{equation}
We now prove the DPP for \eqref{Pcqm}:
\begin{theorem}[\bf{DPP for \eqref{Pcqm}}]\label{thm:markovDPP}
Let $\mu, \nu \in \mathcal{P}(\R^N)$, suppose that $\mu$ is Markov and that the cost is semiseparable in the sense that $c=\sum\limits_t c_t(x_t,y_1,\dots,y_t)$ for non-negative Borel functions $c_t$. Then starting from $V^c_N:=0$ and defining recursively for $t=N,\dots, 2$:
%
\begin{multline}\textstyle
V^c_{t-1}(y_1,\dots,y_{t-1};m(dx_{t-1}))\,\,:= \\ \textstyle \inf\limits_{\gamma\in \Pi\left(\int_{x_{t-1}}m(dx_{t-1})\mu^{x_{t-1}}(dx_t)\, ,\, \nu^{y_1,\dots,y_{t-1}}(dy_t)\right)} \int \gamma(dx_t,dy_t)\bigl\{c_t(x_t,y_1,\dots,y_t)  \\
  + V^c_{t}(y_1,\dots,y_t;\gamma^{y_t}_t)\bigr\},\label{weak-trans-cqm}
\end{multline}
we have that 
$$\textstyle V^c_0:= \inf_{\gamma\in \Pi(p^1_*\mu,p^1_*\nu )} \int \gamma(dx_1,dy_1) \{c_1(x_1,y_1) + V^c_1(y_1;\gamma^{y_1}_1)\}\, =\, {value\eqref{Pcqm}}.$$
%
{Furthermore,} each function $V^c_{t-1}$ is jointly universally measurable and convex in its last component. If moreover Assumption \ref{as:weakcont} holds and each $c_t$ is l.s.c, then $V^c_{t-1}(y_1,\dots,y_{t-1};\cdot)$ is l.s.c.\ and Problem \eqref{Pcqm} is attained.
\end{theorem}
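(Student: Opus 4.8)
The plan is to proceed by backward induction on $t$, establishing simultaneously that $V^c_{t-1}$ is well-defined (the integrand is universally measurable so the integral makes sense), jointly universally measurable, and convex in its last argument $m$. The base case $V^c_N\equiv 0$ is trivial. For the inductive step, suppose $V^c_t$ has these properties. The one-step problem defining $V^c_{t-1}$ is an optimal transport problem between $\int_{x_{t-1}} m(dx_{t-1})\mu^{x_{t-1}}(dx_t)$ and $\nu^{y_1,\dots,y_{t-1}}(dy_t)$ with a cost $(x_t,y_t)\mapsto c_t(x_t,y_1,\dots,y_t) + V^c_t(y_1,\dots,y_t;\gamma^{y_t})$ that depends on $\gamma$ itself through the kernel $\gamma^{y_t}$. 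To handle this dependence and the measurability, I would first rewrite the problem in the ``disintegrated'' form \eqref{weak-trans-2}: the outer marginal over $y_t$ is fixed at $\nu^{y_1,\dots,y_{t-1}}$, so the optimization is over families $y_t\mapsto \gamma^{y_t}\in\mathcal P(\R)$ with the single barycentric constraint $\int \nu^{y_1,\dots,y_{t-1}}(dy_t)\gamma^{y_t}(dx_t) = \int m(dx_{t-1})\mu^{x_{t-1}}(dx_t)$. This is precisely the ``general transport problem'' of \cite{weaktransport}. Measurable selection / universal measurability of the value in the parameters $(y_1,\dots,y_{t-1},m)$ then follows from the measurable selection theorems in \cite{BertsekasShreve} (the cost is jointly universally measurable, in fact lower semianalytic, by the induction hypothesis and composition), exactly as in the standard theory of stochastic control; this is where one invokes \cite[Chapter 7.4.3]{BertsekasShreve}.

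For convexity in $m$: given $m=\lambda m_0 + (1-\lambda) m_1$, take near-optimal $\gamma^{(0)},\gamma^{(1)}$ for the problems associated to $m_0,m_1$, and form the ``mixture'' $\gamma := \lambda\gamma^{(0)}+(1-\lambda)\gamma^{(1)}$. One checks this is admissible for the $m$-problem (its $x_t$-marginal is $\int m(dx_{t-1})\mu^{x_{t-1}}(dx_t)$, its $y_t$-marginal is $\nu^{y_1,\dots,y_{t-1}}$). The subtle point is that the kernel $\gamma^{y_t}$ of the mixture is itself a convex combination $\frac{\lambda q_0(y_t)\gamma^{(0),y_t} + (1-\lambda) q_1(y_t)\gamma^{(1),y_t}}{\lambda q_0(y_t)+(1-\lambda)q_1(y_t)}$ weighted by Radon–Nikodym densities $q_i$ of the $y_t$-marginals — but here those $y_t$-marginals are all equal to $\nu^{y_1,\dots,y_{t-1}}$, so $q_0=q_1=1$ and $\gamma^{y_t}=\lambda\gamma^{(0),y_t}+(1-\lambda)\gamma^{(1),y_t}$ exactly. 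Then convexity of $V^c_t(y_1,\dots,y_t;\cdot)$ (induction hypothesis), linearity of $m\mapsto \int m(dx_{t-1})\mu^{x_{t-1}}(dx_t)$ and of the cost term $\int \gamma^{y_t}(dx_t)c_t$, together with integrating against $\nu^{y_1,\dots,y_{t-1}}(dy_t)$, yield $\int c\,d\gamma \le \lambda(\text{value at }m_0) + (1-\lambda)(\text{value at }m_1)+\varepsilon$. Passing $\varepsilon\to 0$ gives convexity. The identification $V^c_0 = \text{value}\eqref{Pcqm}$ is then obtained by unwinding the recursion: writing a causal quasi-Markov $\gamma$ via its kernels $\gamma^{x_t,y_1,\dots,y_t}$, using the semiseparable structure $c=\sum_t c_t(x_t,y_1,\dots,y_t)$ and Fubini to peel off the innermost integral over $(x_N,y_N)$, recognizing it as the $V^c_{N-1}$-problem, and iterating; the quasi-Markov property ensures the relevant conditional of $\mu$ entering at stage $t$ is indeed $\mu^{x_{t-1}}$, matching the constraint set in \eqref{weak-trans-cqm}.

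For the final lower-semicontinuity and attainment claim under Assumption \ref{as:weakcont} with l.s.c.\ costs $c_t$: here the key extra input is that $(x_1,\dots,x_{t-1})\mapsto \mu^{x_1,\dots,x_{t-1}}$ can be taken weakly continuous, hence $m\mapsto \int m(dx_{t-1})\mu^{x_{t-1}}(dx_t)$ is weakly continuous, and the ``general transport'' value depends lower-semicontinuously on its two marginals when the cost is l.s.c. (lower semicontinuity of the general/weak transport functional in the kernel and continuity in the marginal, as established in \cite{weaktransport}). Propagating l.s.c.\ of $V^c_t(y_1,\dots,y_t;\cdot)$ up the recursion then requires that the map $\gamma\mapsto\int \gamma(dx_t,dy_t)\{c_t + V^c_t(\cdot;\gamma^{y_t})\}$ be l.s.c.\ on the (weakly compact, by Theorem \ref{teo:duality}-type arguments) admissible set, so a minimizer exists at each stage; concatenating these stagewise minimizers — using \cite[Chapter 7.4.3]{BertsekasShreve} once more to get a \emph{measurable} selection $y_t\mapsto\gamma^{y_t}_t$ — produces an optimizer for \eqref{Pcqm}. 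I expect the main obstacle to be the careful bookkeeping in the convexity argument and, more seriously, pinning down exactly which continuity/l.s.c.\ properties of the general transport value as a function of (marginal, kernel-valued cost) one is entitled to borrow from \cite{weaktransport}; the measurable-selection machinery, while technical, is standard once the lower-semianalyticity of the stagewise cost is in hand.
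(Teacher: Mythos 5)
Your architecture matches the paper's: backward induction establishing lower semianalyticity (hence universal measurability) of $V^c_{t}$ via the projection and selection theorems of \cite{BertsekasShreve}, convexity in the last argument via mixtures of near-optimal kernels (your observation that the common $y_t$-marginal $\nu^{y_1,\dots,y_{t-1}}$ makes the disintegration of the mixture an exact convex combination is precisely the point, and you spell it out in more detail than the paper does), and the inequality $value\eqref{Pcqm}\geq V^c_0$ by peeling off the innermost integral using semiseparability and the quasi-Markov property. Two gaps remain. The smaller one: for the equality $V^c_0=value\eqref{Pcqm}$ in the general Borel-cost case you only argue one direction explicitly. The reverse inequality $V^c_0\geq value\eqref{Pcqm}$ requires, \emph{without} any l.s.c.\ assumption, universally measurable $\varepsilon$-optimal selections $(y_1,\dots,y_{t-1},m)\mapsto L_{t-1,\varepsilon}$ (from \cite[Prop.\ 7.50(b)]{BertsekasShreve}) concatenated into a genuine element of $\Pi_{cqm}(\mu,\nu)$; one must then verify that the concatenated measure has $x$-marginal $\mu$, $y$-marginal $\nu$ and the quasi-Markov property. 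You only describe the concatenation for exact minimizers under the l.s.c.\ hypothesis; the tools are at hand but the step is missing.

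The more serious gap is in stagewise attainment. You propose to extract a minimizer from lower semicontinuity of $\gamma\mapsto\int\gamma(dx_t,dy_t)\{c_t+V^c_t(\cdot\,;\gamma^{y_t})\}$ on a weakly compact admissible set, but the disintegration map $\gamma\mapsto\gamma^{y_t}$ is \emph{not} continuous for weak convergence of joint laws, so l.s.c.\ of this functional in $\gamma$ does not follow from l.s.c.\ of $V^c_t$ in the kernel. Likewise, borrowing attainment of the general transport problem directly from \cite{weaktransport} does not work off the shelf: those results assume compact or discrete state spaces, and the paper explicitly notes that its own argument is what removes that restriction. The paper's fix is the one your convexity computation already sets up but that you do not deploy here: take a minimizing sequence of kernels $y_t\mapsto\gamma_n^{y_t}$, pass to Ces\`aro averages, use Balder's theorem \cite{Baldernotes} to get a.e.\ weak convergence of the averaged kernels to a feasible limit kernel, and then combine convexity of $V^c_t$ in the kernel, Fatou's lemma, and the already-established l.s.c.\ in the kernel (which the paper obtains from the maximum theorem \cite[Prop.\ 7.33]{BertsekasShreve} together with closedness of the constraint fibers under Assumption \ref{as:weakcont}). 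Without this Koml\'os/Ces\`aro device, the existence of a stagewise minimizer — and hence of an optimizer for \eqref{Pcqm} — is not established.
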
 

To be precise, we shall prove that $V^c_{t-1}$ is lower semianalytic (i.e.\ $\{V^c_{t-1}<r\}$ is analytic for each $r$; see \cite[Definition 7.21]{BertsekasShreve}), which implies universal measurability.\\

\begin{proof}
We split the proof in several steps. It is clear from its definition that $V^c_{t}$ (for any $t$) is convex in its last component {(namely $V_t^c(y_1,\dots,y_t;\lambda m+(1-\lambda)\bar m)\leq \lambda V^c_t(y_1,\dots,y_t; m)+(1-\lambda)V^c_t(y_1,\dots,y_t; \bar{m})$, see e.g.\ \cite[Theorem 4.8]{Villani_Old})} and generally bounded from below.\\

\textit{STEP 1:} We first show that the sets:
$$\textstyle
D_{t-1}:=\bigl \{ (y_1,\dots,y_{t-1},m,\gamma)\mbox{ s.t. }\gamma\in \Pi\bigl(\,\,\int_{x_{t-1}}m(dx_{t-1})\mu^{x_{t-1}}(dx_t)\, ,\, \nu^{y_1,\dots,y_{t-1}}(dy_t)\bigr)\bigr\},
$$
are Borel, so in particular analytic. Indeed, observe first that 
$$\textstyle
\bar{D}_{t-1}:=\left \{ (p,q,\gamma)\mbox{ s.t. }\gamma\in \Pi\left(q(dx_t)\, ,\, p(dy_t)\right)\right\},
$$
is closed w.r.t.\ weak convergence of probability measures. So if we denote by $\Psi$ the map $$\textstyle (y_1,\dots,y_{t-1},m,\gamma)\mapsto  \bigl(\nu^{y_1,\dots,y_{t-1}}(dy_t),\int_{x_{t-1}}m(dx_{t-1})\mu^{x_{t-1}}(dx_t),\gamma \bigr),$$ we have $D_{t-1}=\Psi^{-1}(\bar{D}_{t-1})$ and so it suffices to show that the map $m(dx_{t-1})\mapsto \int\limits_{x_{t-1}}m(dx_{t-1})\mu^{x_{t-1}}(dx_t)$ is Borel. 
{For this, it suffices to show that for Borel sets $\{A_i,B_i\}_{i=1}^r$ the sets of the form $$\textstyle \{m:\, \int_{x_{t-1}} m(dx_{t-1})\mu^{x_{t-1}}(A_i)\in B_i\mbox{ for }i=1,\dots,r \},$$ are again Borel. Since $x_{t-1}\mapsto \tilde{g}_i(x_{t-1}):=\mu^{x_{t-1}}(A_i)$ is a bounded Borel function, everything ultimately}  boils down to proving that if $g_1,\dots,g_r$ are bounded Borel functions, then the sets
\begin{equation}\textstyle
\left\{m\mbox{ Borel prob.\ measures s.t. }\int g_1dm\geq 0,\dots,\int g_rdm\geq 0\right\},\label{borelbounded}
\end{equation}
are Borel. But this is now a straightforward monotone class argument which we omit. 

For future use in Step 5, we observe that the sets 
$$\textstyle\bigl \{ (m,\gamma)\mbox{ s.t. }\gamma\in \Pi\bigl(\,\,\int_{x_{t-1}}m(dx_{t-1})\mu^{x_{t-1}}(dx_t)\, ,\, \nu^{y_1,\dots,y_{t-1}}(dy_t)\bigr)\bigr\},$$
which are generally Borel only (as a fibers of the set $D_{t-1}$), are further closed as soon as Assumption \ref{as:weakcont} holds. \\


\textit{STEP 2:} Let us now prove that the recursion is well-defined; namely, that the involved integrated functions are defined at most except for a null-set w.r.t.\ the integral. To be precise, we will show that these functions are lower semianalytic, and so universally measurable, thus in particular their integrals are well defined w.r.t.\ any Borel measure. For that matter, we shall first prove that 
$$\textstyle y_1,\dots,y_{t},m(dx_{t})\mapsto V^c_{t}(y_1,\dots,y_{t};m) $$ 
is lower semianalytic (l.s.a.\ in short). By \cite[Lemma 1.40]{Kallbook}, we know that for the regular kernel of a given $\gamma(dx_{t},dy_{t})$, the application $y_{t}\mapsto \gamma^{y_{t}}(dx_{t})$ is Borel measurable, then so is
the map $(y_1, \dots, y_{t},\gamma) \mapsto (y_1, \dots, y_{t}, \gamma^{y_{t}}(dx_{t}))$. Therefore, by \cite[Lemma 7.30(3)]{BertsekasShreve} (on the composition of l.s.a.\ and Borel maps)  we deduce that $(y_1,\dots,y_{t},\gamma)\mapsto V^c_{t}(y_1,\dots,y_{t};\gamma^{y_{t}}) $ is l.s.a\ for $\gamma$ participating in the infimum in \eqref{weak-trans-cqm} and in particular the integral is indeed well-defined.

We start with $V^c_{N-1}$. The cost 
$$\textstyle (y_1,\dots,y_{N-1},m,\gamma)\mapsto \int \gamma(dx_N,dy_N)c_N(x_N,y_1,\dots,y_N),$$
is Borel measurable and so in particular l.s.a. To see this, take $g_1=-[c_N\wedge s]$ in \eqref{borelbounded} and take $s\to +\infty$. We can write $V^c_{N-1}(y_1,\dots,y_{N-1};m)$ as the infimum of this cost over the fiber of the set $D_{N-1}$ at $(y_1,\dots,y_{N-1},m)$. By \cite[Proposition 7.47]{BertsekasShreve} and Step $1$, the function $V^c_{N-1}$ is l.s.a.\ on the projection of $D_{N-1}$ onto its $(y_1,\dots,y_{N-1},m)$ components. By reverse induction, let us suppose that $V^c_{t}$ is l.s.a.\ and prove that $V^c_{t-1}$ is likewise. The cost to consider is now
$$\textstyle (y_1,\dots,y_{t-1},m,\gamma)\mapsto \int \gamma(dx_t,dy_t)c_t(x_t,y_1,\dots,y_t)+\int\nu^{y_1,\dots,y_{t-1}}(dy_t)V^c_{t}(y_1,\dots,y_t;\gamma^{y_t}_t),$$
the first term of which is Borel as before and whose second term is l.s.a.\ by virtue of the inductive step, the discussion about composition of l.s.a.\ and Borel maps above, and by \cite[Proposition 7.48]{BertsekasShreve} on the integration of l.s.a.\ functions with respect to Borel kernels. By \cite[Lemma 7.30(4)]{BertsekasShreve} we get that their sum is l.s.a.\ and so by \cite[Proposition 7.47]{BertsekasShreve} and Step $1$ again we conclude that $V^c_{t-1}$ is l.s.a.\\

\textit{STEP 3:} By the previous point, and the way we wrote \eqref{weak-trans-cqm} as an infimum of a l.s.a.\ cost over a fiber of an analytic set in Step $2$, we can perform for any $\varepsilon>0$ by \cite[Proposition 7.50(b)]{BertsekasShreve} a selection 
$$\textstyle
(y_1,\dots,y_{t-1},m)\mapsto L_{t-1,\varepsilon}^{y_1,\dots,y_{t-1};m}(dx_t,dy_t)\in \Pi\left(\,\,\int_{x_{t-1}}m(dx_{t-1})\mu^{x_{t-1}}(dx_t)\, ,\, \nu^{y_1,\dots,y_{t-1}}(dy_t)\right),
$$
so that the above mapping is universally measurable (i.e.\ measurable w.r.t.\ any completion of the corresponding Borel sets of its domain) and 
\begin{multline*}\textstyle
V^c_{t-1}(y_1,\dots,y_{t-1},m)+\varepsilon\geq \int L_{t-1,\varepsilon}^{y_1,\dots,y_{t-1};m}(dx_t,dy_t)\bigl[c_t(x_t,y_1,\dots,y_t) \bigr . \\ \bigl .+V^c_{t}(y_1,\dots,y_t;(L_{t-1,\varepsilon}^{y_1,\dots,y_{t-1};m})^{y_t})\bigr ],
\end{multline*}
so each $L$ is an $\varepsilon$-optimizer for the corresponding problem. We will now build a measure, whose successive kernels will solve the recursions \eqref{weak-trans-cqm} at each step, modulo an $\varepsilon$ margin. Start with any $\varepsilon$-optimizer $\gamma_{0,\varepsilon}(dx_1,dy_1)$ of $V^c_0$. Then take $y_{1}\mapsto\gamma_{1,\varepsilon}^{y_1}(dx_2,dy_2):=L_{1,\varepsilon}^{y_1;\gamma_{0,\varepsilon}^{y_1}}(dx_2,dy_2)$, which is universally measurable by the composition result \cite[	Proposition 7.44]{BertsekasShreve}. Inductively, if $(y_1,\dots,y_{t-1})\mapsto\gamma_{t-1,\varepsilon}^{y_1,\dots,y_{t-1}}(dx_t,dy_t)$ has been constructed in a universally measurable way, we build 
$$\textstyle (y_1,\dots,y_t)\mapsto \gamma_{t,\varepsilon}^{y_1,\dots,y_t}(dx_{t+1},dy_{t+1}):= L_{t,\varepsilon}^{y_1,\dots,y_t;(\gamma_{t-1,\varepsilon}^{y_1,\dots,y_{t-1}})^{y_t}}(dx_{t+1},dy_{t+1}),$$
which is universally measurable by \cite[Proposition 7.44]{BertsekasShreve}, the inductive step, and the fact that the function that associates a regular kernel to a product measure is Borel. This finishes the induction, and by construction 
\begin{align}\label{epsilon opt kernel}\textstyle
\gamma_{t,\varepsilon}^{y_1,\dots,y_t}(dx_{t+1},dy_{t+1}) \in \Pi\left( \,\,\int_{x_t}(\gamma_{t-1,\varepsilon}^{y_1,\dots,y_{t-1}})^{y_t}(dx_t)\mu^{x_t}(dx_{t+1}) ,\nu^{y_1,\dots,y_t}(dy_{t+1})\right ),
\end{align}
and $\gamma_{t,\varepsilon}^{y_1,\dots,y_t}$ attains $V^c_t(y_1,\dots,y_t;(\gamma_{t-1,\varepsilon}^{y_1,\dots,y_{t-1}})^{y_t})$ except for an $\varepsilon$ margin. We now define
$$\textstyle (x_t,y_1,\dots,y_t)\mapsto\Gamma_{t,\varepsilon}^{x_t,y_1,\dots,y_t}(dx_{t+1},dy_{t+1}):=\mu^{x_t}(dx_{t+1})(\gamma_{t,\varepsilon}^{y_1,\dots,y_t})^{x_{t+1}}(dy_{t+1}),$$
which is universally measurable by  \cite[	Proposition 7.44]{BertsekasShreve} again. \mbox{By \cite[Proposition 7.45]{BertsekasShreve} } the successive concatenation of these kernel induce a unique Borel measure $\Gamma_{\varepsilon}$ such that
\begin{multline*}\textstyle
\Gamma_{\varepsilon}(dx_1,\dots,dx_N,dy_1,\dots,dy_N)\,=\, \gamma_{0,\varepsilon}(dx_1,dy_1)\Gamma_{1,\varepsilon}^{x_1,y_1}(dx_2,dy_2)\dots\\\Gamma_{t,\varepsilon}^{x_t,y_1,\dots,y_t}(dx_{t+1},dy_{t+1})\dots  \Gamma_{N-1,\varepsilon}^{x_{N-1},y_1,\dots,y_{N-1}}(dx_{N},dy_{N}).
\end{multline*}
By construction, this measure is causal quasi-Markov and its $x$-marginal is exactly $\mu$. As for the $y$-marginal, it is easy to see that $\Gamma_{\varepsilon}(dy_1)=\nu(y_1)$ and that 
\begin{multline*}\textstyle \Gamma_{\varepsilon}(dy_1,dy_2)=\int_{x_1,x_2}\gamma_{0,\varepsilon}(dx_1,dy_1)\mu^{x_1}(dx_2)(\gamma_{1,\varepsilon}^{y_1})^{x_2}(dy_2)\\ \textstyle =\nu(dy_1)\int_{x_2}\left(\,\,\int_{x_1}\gamma_{0,\varepsilon}^{y_1}(dx_1)\mu^{x_1}(dx_2)\right)(\gamma_{1,\varepsilon}^{y_1})^{x_2}(dy_2)=\nu(dy_1)\nu^{y_1}(dy_2),
\end{multline*}
observing (from (\ref{epsilon opt kernel})) in the last line, inside the brackets, that this is exactly the first marginal of $\gamma_{1,\varepsilon}^{y_1}$. Inductively, one can verify that $\Gamma_{\varepsilon}$ has $\nu$ as its $y$-marginal.\\

\textit{STEP 4:} We now prove the equality $V^c_0 = value\eqref{Pcqm}$. By the previous step, $V^c_0+N\varepsilon \geq value\eqref{Pcqm}$, since $\Gamma_{\varepsilon}$ is feasible for \eqref{Pcqm} and because by construction $\Gamma_{\varepsilon}$ was designed ${\varepsilon}$-optimal at each step, so that overall $V^c_0+N{\varepsilon}\geq\int cd\Gamma_{\varepsilon}$. By letting $\varepsilon\to 0$, we get $V^c_0 \geq value\eqref{Pcqm}$. On the other hand, given any $\gamma\in \Pi_{cqm}(\mu,\nu)$, we clearly have that
%
%
\begin{equation}\label{eq:aux}\textstyle
\gamma^{y_1,\dots,y_{t}}(dx_{t+1},dy_{t+1}) \in \Pi\bigl( \,\, \int_{x_{t}} (\gamma^{y_1,\dots,y_{t-1}})^{y_t} (dx_{t})\mu^{x_{t}}(dx_{t+1})\, ,\, \nu^{y_1,\dots,y_{t}}(dy_{t+1})   \bigr) ,
\end{equation}
 and further we can write 
\begin{multline}\textstyle
\int c_td\gamma =\int \gamma(dy_1,\dots,dy_{t-1}) \gamma^{y_1,\dots,y_{t-1}}(dx_t,dy_t)c_t(x_t,y_1,\dots,y_t)  = \\ \textstyle
\int\gamma(dx_1,dy_1)\gamma^{y_1}(dx_2,dy_2)\gamma^{y_1,y_2}(dx_3,dy_3)\dots\gamma^{y_1,\dots,y_{t-1}}(dx_t,dy_t)c_t(x_t,y_1,\dots,y_t),\notag
\end{multline}
so
\begin{equation}\textstyle
\int cd\gamma=\int \gamma(dx_1,dy_1)\left\{c_1 + \int\gamma^{y_1}(dx_2,dy_2)\left \{ c_2 + \int\gamma^{y_1,y_2}(dx_3,dy_3) \big\{ c_3+\dots \big \} \right \}  \right \}.\label{eq:crucial}
\end{equation}
Combining this with \eqref{eq:aux} we get $value\eqref{Pcqm} \geq V_0^c $ and conclude the desired equality.\\ 
%


From now on we take Assumption \ref{as:weakcont} for granted and assume l.s.c.\ costs.\\

\textit{STEP 5:} We establish now that $V^c_{t-1}$ is lower semicontinuous in its last argument (i.e.\ $m$), the other ones being fixed. The desired lower semicontinuity could be proved by hand using convex combination of kernels as in Step $6$ below, but a simpler argument is to invoke a ``maximum theorem'' as in \cite[Proposition 7.33]{BertsekasShreve}, which establishes the lower semicontinuity of a value function as long as the cost is jointly lower semicontinuous (in our case w.r.t.\ $(m,\gamma)$, since the $y$'s are fixed, and this clearly holds) and as soon as the constraint set is a fiber of a closed set (which is true by the last observation in Step $1$). The lower semicontinuity in $(m,\gamma)$ is proved by reverse induction, much as in Step $2$, and for \cite[Proposition 7.33]{BertsekasShreve} one can assume that the $\gamma$'s live a priori in a compact space, since varying the $m$'s in a tight set only will ``move'' the constraint set $\Pi$ inside a larger tight set in the product.\\

\textit{STEP 6:} We now prove that each of the problems in \eqref{weak-trans-cqm} is attained. The $\int c_td\gamma$ part of the cost is l.s.c.\ and linear, so it causes no trouble and we may assume $c_t\equiv 0$. We take a minimizing sequence $\gamma_n$ for \eqref{weak-trans-cqm}:
$$\textstyle V^c_{t-1}(y_1,\dots,y_{t-1};m)=\lim_n \int \nu^{y_1,\dots,y_{t-1}}(dy_t)V^c(y_1,\dots,y_{t};\gamma_n^{y_t}).$$
Trivially the sequence $\int_{y_t}\nu^{y_1,\dots,y_{t-1}}(dy_t)\gamma_n^{y_t}(dx_t)$ is tight, since it is identically equal to the measure $\int_{x_{t-1}}m(dx_{t-1})\mu^{x_{t-1}}(dx_t)$. By \cite[Theorem 3.15]{Baldernotes} we conclude that there is a subsequence of the kernel $\{y_t\mapsto \gamma_n^{y_t}\}_n$, which we denote the same, and a kernel $\{y_t\mapsto\gamma^{y_t}_t\}$ such that the sequence of C\'esaro averages 
$$\textstyle y_t\mapsto\tilde{\gamma}_n^{y_t}:=\frac{1}{n}\sum\limits_{i\leq n}\gamma_i^{y_t}$$
converges weakly, for $\nu^{y_1,\dots,y_{t-1}}(dy_t)$-a.e.\ $y_t$, to the kernel $y_t\mapsto\gamma^{y_t}$ (i.e.\ as measures on $x_t$, see \cite[Definition 3.10]{Baldernotes}) and further it holds that $\int\nu^{y_1,\dots,y_{t-1}}(dy_t)\gamma^{y_t}_t(dx_t)$ equals $\int_{x_{t-1}}m(dx_{t-1})\mu^{x_{t-1}}(dx_t)$ by \cite[Corollary 3.14]{Baldernotes}, so the measure $\gamma:=\nu^{y_1,\dots,y_{t-1}}(dy_t)\gamma^{y_t}_t(dx_t)$ is feasible for \eqref{weak-trans-cqm}. All in all it follows
\begin{align*}\textstyle
V^c_{t-1}(y_1,\dots,y_{t-1};m) &= \textstyle \liminf_n\frac{1}{n}\sum\limits_{i\leq n} \int \nu^{y_1,\dots,y_{t-1}}(dy_t)V^c(y_1,\dots,y_{t};\gamma_i^{y_t})\\  & \textstyle \geq 
 \liminf_n \int \nu^{y_1,\dots,y_{t-1}}(dy_t)V^c(y_1,\dots,y_{t};\tilde{\gamma}_n^{y_t})\\
&\textstyle \geq \int \nu^{y_1,\dots,y_{t-1}}(dy_t)\liminf_n V^c(y_1,\dots,y_{t};\tilde{\gamma}_n^{y_t})\\ \textstyle&= \textstyle\int \nu^{y_1,\dots,y_{t-1}}(dy_t) V^c(y_1,\dots,y_{t};{\gamma}^{y_t}_t),
\end{align*}
by convexity, Fatou's Lemma (remember the $V^c$'s are bounded below) and the lower semicontinuity established in the previous Step. Therefore, $\gamma$ is feasible and optimal.\\

\textit{STEP 7:} By the previous step, we may now go back to Step 3 and find by \linebreak \cite[Proposition 7.50(b)]{BertsekasShreve}  a selection of optimizers at each time (as opposed to only $\varepsilon$-optimizers) $L_t^{y_1,\dots,y_t;m}$. Then we can redo Step 3 with $\varepsilon=0$, building a global measure $\Gamma$ which exactly solves the recursion and is feasible for \eqref{Pcqm}, and so is optimal for it.
\end{proof}
\medskip

Some observations regarding the previous theorem and its proof:

\begin{remark}
Even if $c$ does not have the stated semiseparable structure, a look at the proof shows that the recursion is well-posed and that its value gives an upper bound for $value\eqref{Pcqm}$. The semiseparable structure is crucial for the lower bound (see \eqref{eq:crucial}) only.
\end{remark}

\begin{remark}
As a by-product of the previous proof, we see from Step 6 therein a way to prove attainability of general transport problems as in \cite{weaktransport} in the presence of lower semicontinuity and convexity of the cost w.r.t.\ the kernel, without the assumption that the state space be compact or discrete-like.
\end{remark}

As we have mentioned in Theorem \ref{thm:DPPcausal}, each optimization problem in DPP is of the form  of general (non-linear) transports on the line. Under Assumption \ref{as:biweakcont} and for the lower semicontinuous cost function each of the value functions in (\ref{weak-trans-cqm}) is convex in the kernel and jointly l.s.c. Assuming additionally that all the regular kernels of measures $\mu, \nu$ are compactly supported, we can use the duality result of Gozlan et al.\   \cite[Theorem 9.5]{weaktransport}. Denote $\eta(dx_t) = \int\limits_{x_{t-1}}m(dx_{t-1})\mu^{x_{t-1}}(dx_t)$, and 
$$\textstyle \bar{c}_t^{y_1,\dots,y_{t-1}}(y_t; p) = \int_{x_t} c_t(x_t,y_1,\dots,y_t)p(dx_t) + V^c_{t}(y_1,\dots,y_{t-1},y_t;p(dx_{t})).$$ 
Then the following dual formulation holds:
\begin{align}\label{dual DPP}\textstyle
V_{t-1}^c(y_1,\dots,y_{t-1}; m(dx_{t-1})) = \sup_{\phi} \bigl\lbrace \int R_{\bar{c}_t^{y_1,\dots,y_{t-1}}} \phi(y_t) \nu^{y_1,\dots,y_{t-1}}(dy_t) - \int \phi(x_t) \eta(dx_t) \bigr\rbrace,
\end{align}
where
$$\textstyle R_{\bar{c}_t^{y_1,\dots,y_{t-1}}} \phi(y_t) = \inf_{p \in \mathcal{P}(\R)} \left\lbrace  \int \phi(x_t)p(dx_t) + \bar{c}_t^{y_1,\dots,y_{t-1}}(y_t; p)\right\rbrace. $$

\medskip

We finally establish a sufficient condition for \eqref{Pcqm} and \eqref{Pc} to be equivalent:

\begin{proposition}\label{prop:markov=cuasal}
Let $\mu$ be Markov and the cost be semiseparable. Then $value\eqref{Pcqm}= value\eqref{Pc}$ and if \eqref{Pc} is attained, then there is some optimal causal transport which is further causal quasi-Markov.
\end{proposition}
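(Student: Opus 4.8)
The plan is to prove $value\eqref{Pcqm}=value\eqref{Pc}$ by showing both inequalities, the nontrivial one being $value\eqref{Pc}\geq value\eqref{Pcqm}$. Since $\Pi_{cqm}(\mu,\nu)\subseteq\Pi_c(\mu,\nu)$, the inequality $value\eqref{Pc}\leq value\eqref{Pcqm}$ is immediate. For the reverse, I would take any $\gamma\in\Pi_c(\mu,\nu)$ and construct from it a causal quasi-Markov plan $\hat\gamma\in\Pi_{cqm}(\mu,\nu)$ with $\int c\,d\hat\gamma\leq\int c\,d\gamma$; combined with the first inequality and with Theorem \ref{thm:markovDPP} (which identifies $value\eqref{Pcqm}=V^c_0$), this yields the claim. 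The construction mimics the one performed in Step 3 of the proof of Theorem \ref{thm:markovDPP}: starting from $\bar\gamma(dx_1,dy_1)$ one builds successive kernels by replacing, at each time step $t$, the true conditional kernel of $\gamma$ on $(x_{t+1},y_{t+1})$ given the whole past $(x_1,\dots,x_t,y_1,\dots,y_t)$ by the kernel that first transports according to $\mu^{x_t}(dx_{t+1})$ (the Markov conditional of $\mu$) and then disintegrates the $y$-part appropriately; concretely one sets $\hat\gamma^{x_t,y_1,\dots,y_t}(dx_{t+1},dy_{t+1}):=\mu^{x_t}(dx_{t+1})(\gamma^{y_1,\dots,y_t}_t)^{x_{t+1}}(dy_{t+1})$, where $\gamma^{y_1,\dots,y_t}_t(dx_{t+1},dy_{t+1}):=\gamma^{y_1,\dots,y_t}(dx_{t+1},dy_{t+1})$, so that the new plan retains the same $y$-conditional structure as $\gamma$ but is Markovian and causal in the $x$-direction. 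One checks, exactly as in Steps 3--4 of the earlier proof, that $\hat\gamma$ has marginals $\mu$ and $\nu$ and lies in $\Pi_{cqm}(\mu,\nu)$.

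The key point is that this replacement does not increase the cost. Here the semiseparable structure $c=\sum_t c_t(x_t,y_1,\dots,y_t)$ is decisive: since $c_t$ depends on the $x$-variables only through $x_t$, and since for a causal plan the relevant one-step marginal on $x_t$ of $\gamma$ equals that of $\mu$ (namely $\mu^{x_1,\dots,x_{t-1}}$ by Proposition \ref{prop:equiv charact}(2), whereas for the Markov measure $\mu$ this further equals $\mu^{x_{t-1}}$), one sees term by term that $\int c_t\,d\hat\gamma=\int c_t\,d\gamma$. Indeed, writing $\int c_t\,d\gamma$ via the recursive disintegration as in \eqref{eq:crucial}, each $c_t$ is integrated against the joint law of $(x_t,y_1,\dots,y_t)$, and this joint law is unchanged when we pass from $\gamma$ to $\hat\gamma$, because the $y$-kernels were preserved and the $x_t$-marginal is in both cases forced to be the $\mu$-conditional. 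Hence $\int c\,d\hat\gamma=\int c\,d\gamma$, which in fact gives equality of costs, not merely an inequality, and so $value\eqref{Pcqm}\leq\int c\,d\gamma$ for every $\gamma\in\Pi_c(\mu,\nu)$; taking the infimum over $\gamma$ finishes the proof of $value\eqref{Pcqm}=value\eqref{Pc}$.

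For the last assertion, suppose $\gamma^*$ is optimal for \eqref{Pc}. Applying the construction above to $\gamma^*$ produces $\hat\gamma^*\in\Pi_{cqm}(\mu,\nu)$ with $\int c\,d\hat\gamma^*=\int c\,d\gamma^*=value\eqref{Pc}=value\eqref{Pcqm}$, so $\hat\gamma^*$ is an optimal causal transport which is moreover causal quasi-Markov, as desired.

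The main obstacle I anticipate is the bookkeeping in verifying that $\hat\gamma$ genuinely has $\nu$ as its $y$-marginal and is causal quasi-Markov: one must carefully propagate the identity between the one-step $x$-marginal of the constructed kernel and $\mu^{x_t}$, exactly the inductive computation carried out in Step 3 of the proof of Theorem \ref{thm:markovDPP} (the displayed computation of $\Gamma_\varepsilon(dy_1,dy_2)$ there). Since that argument has already been spelled out in detail for the DPP construction and applies verbatim here with $\varepsilon=0$ and with $\gamma^{y_1,\dots,y_t}_t$ taken directly from $\gamma$ rather than from an $\varepsilon$-optimal selector, I would simply refer to it rather than repeat it. The only genuinely new ingredient is the cost-invariance argument of the previous paragraph, which rests squarely on the semiseparability hypothesis.
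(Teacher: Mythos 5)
Your argument is correct and follows the paper's overall strategy (quasi-Markovize an arbitrary causal plan without changing the joint law of $(x_t,y_1,\dots,y_t)$ for any $t$, then invoke semiseparability to conclude the cost is unchanged), but the kernel you substitute is genuinely different. The paper sets $\hat\gamma^{x_t,y_1,\dots,y_t}(dx_{t+1},dy_{t+1}):=\gamma^{x_t,y_1,\dots,y_t}(dx_{t+1},dy_{t+1})$ --- literally the conditional of $\gamma$ given $(x_t,y_1,\dots,y_t)$ --- and verifies $\int H\,d\hat\gamma=\int H\,d\gamma$ for every $H=H(x_t,y_1,\dots,y_t)$ by a direct telescoping computation, which simultaneously yields the $\nu$-marginal, causality, and the cost identity. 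Your kernel $\mu^{x_t}(dx_{t+1})(\gamma^{y_1,\dots,y_t})^{x_{t+1}}(dy_{t+1})$ discards in addition the dependence of the conditional law of $y_{t+1}$ on $x_t$; it works too, but the appeal to Step 3 of the proof of Theorem \ref{thm:markovDPP} is not quite ``verbatim'': there the first marginal of the building block $\gamma_{t,\varepsilon}^{y_1,\dots,y_t}$ equals $\int(\gamma_{t-1,\varepsilon}^{y_1,\dots,y_{t-1}})^{y_t}(dx_t)\mu^{x_t}(dx_{t+1})$ \emph{by construction}, whereas in your setting you must first derive the identity $\gamma^{y_1,\dots,y_t}(dx_{t+1})=\int\gamma^{y_1,\dots,y_t}(dx_t)\mu^{x_t}(dx_{t+1})$ from causality of $\gamma$ together with Markovianity of $\mu$ (the analogue of \eqref{eq:aux}, extended from $\Pi_{cqm}(\mu,\nu)$ to all of $\Pi_c(\mu,\nu)$), and then run a short induction showing that the law of $(y_1,\dots,y_t,x_t)$ under $\hat\gamma$ agrees with that under $\gamma$ for each $t$; this single induction then gives the $\nu$-marginal, the causal quasi-Markov property, and $\int c_t\,d\hat\gamma=\int c_t\,d\gamma$ all at once. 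Both routes deliver exactly the same conclusion; the paper's version has slightly lighter bookkeeping because its replacement kernel is itself a disintegration of $\gamma$.
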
 

\begin{proof}
We prove that under the given assumptions, any causal plan has a related causal quasi-Markov plan which incurs in the same cost. Let $\gamma \in \Pi_c(\mu,\nu)$, and build
$$\textstyle d\hat{\gamma}=\gamma(dx_1,dy_1)\gamma^{x_1,y_1}(dx_2,dy_2)\gamma^{x_2,y_1,y_2}(dx_3,dy_3)\dots\gamma^{x_{N-1},y_1,\dots,y_{N-1}}(dx_N,dy_N). $$
%
As $\hat{\gamma}^{x_1,\dots,x_t,y_1,\dots,y_t}={\gamma}^{x_t,y_1,\dots,y_t}= \hat{\gamma}^{x_t,y_1,\dots,y_t} $ we have $\hat{\gamma}$ is {quasi-Markov} and its $x$-marginal is $\mu$, indeed
\begin{align*}\textstyle
\hat{\gamma}^{x_1,\dots,x_t,y_1,\dots,y_t}(dx_{t+1})=&\,\,{\gamma}^{x_t,y_1,\dots,y_t}(dx_{t+1})\\=&\textstyle\,\,\int_{x_1,\dots,x_{t-1}}\gamma^{x_1,\dots,x_t,y_1,\dots,y_t}(dx_{t+1}){\gamma}^{x_t,y_1,\dots,y_t}(dx_1,\dots,dx_{t-1})
\\ =&\textstyle\,\, \int_{x_1,\dots,x_{t-1}}\mu^{x_t}(dx_{t+1}){\gamma}^{x_t,y_1,\dots,y_t}(dx_1,\dots,dx_{t-1}) =\mu^{x_t}(dx_{t+1}),
\end{align*}
{so it is also causal.} We finally prove that for every function of the form $H:=H(x_t,y_1,\dots,y_t)$, holds that $\int Hd\gamma=\int Hd\hat{\gamma}$. This shows first that $\gamma$ and $\hat{\gamma}$ incur in the same cost under the semiseparability assumption, and second that the $y$-marginal of $\hat{\gamma}$ is $\nu$, which finally establishes $\hat{\gamma} \in \Pi_{cqm}(\mu,\nu)$. 
\begin{align*}\textstyle
\int Hd\hat{\gamma}=&\textstyle\,\,\int H \gamma(dx_1,dx_2,dy_1,dy_2)\gamma^{x_2,y_1,y_2}(dx_3,dy_3)\hat{\gamma}^{x_3,y_1,y_2,y_3}(dx_4,\dots,dy_4,\dots) \\
 = &\textstyle \textstyle \,\,\int H \gamma(dx_2,dy_1,dy_2)\gamma^{x_2,y_1,y_2}(dx_3,dy_3)\hat{\gamma}^{x_3,y_1,y_2,y_3}(dx_4,\dots,dy_4,\dots)\\=&\textstyle\,\, \int H \gamma(dx_2,dx_3,dy_1,dy_2,dy_3)\gamma^{x_3,y_1,y_2,y_3}(dx_4,dy_4)\hat{\gamma}^{x_4,y_1,y_2,y_3,y_4}(dx_5,\dots,dy_5,\dots)\\
  =& \textstyle\,\, \int H \gamma(dx_3,dy_1,dy_2,dy_3)\gamma^{x_3,y_1,y_2,y_3}(dx_4,dy_4)\hat{\gamma}^{x_4,y_1,y_2,y_3,y_4}(dx_5,\dots,dy_5,\dots)\\
  \dots \,\, =&\textstyle\,\,\int H \gamma(dx_{t-1},dy_1,\dots,dy_{t-1})\gamma^{x_{t-1},y_1,\dots,y_{t-1}}(dx_t,dy_t) \,\,=\,\, \int Hd\gamma. 
\end{align*}
\end{proof}

\begin{remark}
With Proposition \ref{prop:markov=cuasal} we can conclude that, whenever $\mu$ is Markov and the cost is l.s.c.\ and semiseparable, the problem \eqref{Pcqm} has a solution. Indeed, since \eqref{Pc}=\eqref{Pcqm} and by \cite[Corollary 1]{Lassalle2} the former is attained, one constructs also an optimizer for the latter. Our Theorem \ref{thm:markovDPP} yields the same with a stronger assumption in a self-contained way, through DPP. Let us stress that the causal quasi-Markov constraint is not a linear/convex one.
\end{remark}

All in all, the proof of Theorem \ref{thm:DPPcausal} is now trivial:

\begin{proof}[Proof of Theorem \ref{thm:DPPcausal}]
By Proposition \ref{prop:markov=cuasal} $value\eqref{Pcqm}= value\eqref{Pc}$ and by Theorem \ref{thm:markovDPP} we have the recursion and all the other properties.
\end{proof}


We close the present part with a preliminary illustration of how the causal DPP can be used in practice; here we show a condition giving the equivalence between \eqref{Pc} and \eqref{Pbc}:

\begin{corollary}\label{coro:wass one}
Consider $N=2$ and a separable cost of the form $c=c_1(x_1,y_1)+|x_2-y_2|$, with $c_1$ l.s.c.\ and bounded from below. Suppose 
that for every $z,y_1$ one of the following two cases holds
$$\textstyle \mbox{either }\left[\forall x_1:F_{\mu^{x_1}}(z) \geq F_{\nu^{y_1}}(z)\right ]\,\,\, \mbox{ or }\,\,\, \left[\forall x_1:F_{\mu^{x_1}}(z) \leq F_{\nu^{y_1}}(z)\right ], $$
Then \eqref{Pc} is equivalent to \eqref{Pbc}.
\end{corollary}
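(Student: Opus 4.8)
The plan is to reduce both \eqref{Pc} and \eqref{Pbc} to one-dimensional $1$-Wasserstein expressions over couplings of the first marginals $\mu_1,\nu_1$, by disintegrating along the first coordinate, and then to use the hypothesis to show that these two expressions coincide term by term. Since $\Pi_{bc}\subseteq\Pi_c$ one always has $value\eqref{Pc}\le value\eqref{Pbc}$, so only the reverse inequality is at issue.

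First I would record the two explicit formulas. Because $N=2$, every probability on $\R^2$ is Markov and the cost is semiseparable with $c_2(x_2,y_1,y_2):=|x_2-y_2|\ge0$ (and, after adding a constant to $c_1$, which shifts \eqref{Pc} and \eqref{Pbc} by the same amount, one may take $c_1\ge 0$); hence Theorem \ref{thm:DPPcausal} applies. With $V_2^c\equiv 0$ the inner step is optimal transport on the line for the cost $|x_2-y_2|$, so $V_1^c\bigl(y_1;m(dx_1)\bigr)=\mathcal W_1\bigl(\int m(dx_1)\mu^{x_1},\nu^{y_1}\bigr)$, and therefore
\[\textstyle
value\eqref{Pc}=\inf_{\gamma_1\in\Pi(\mu_1,\nu_1)}\Bigl[\int c_1\,d\gamma_1+\int\nu_1(dy_1)\,\mathcal W_1\bigl(\int\gamma_1^{y_1}(dx_1)\mu^{x_1},\nu^{y_1}\bigr)\Bigr].
\]
For \eqref{Pbc}, decomposing $\gamma=\bar\gamma(dx_1,dy_1)\,\gamma^{x_1,y_1}(dx_2,dy_2)$, Proposition \ref{prop:equiv charact} shows bicausality is exactly $\bar\gamma\in\Pi(\mu_1,\nu_1)$ together with $\gamma^{x_1,y_1}\in\Pi(\mu^{x_1},\nu^{y_1})$ for $\bar\gamma$-a.e.\ $(x_1,y_1)$, and minimizing $\int|x_2-y_2|\,d\gamma^{x_1,y_1}$ fiberwise (the monotone coupling $(F_{\mu^{x_1}}^{-1},F_{\nu^{y_1}}^{-1})_*\mathrm{Leb}_{[0,1]}$ providing both the optimum and a measurable selector) gives
\[\textstyle
value\eqref{Pbc}=\inf_{\gamma_1\in\Pi(\mu_1,\nu_1)}\Bigl[\int c_1\,d\gamma_1+\int\gamma_1(dx_1,dy_1)\,\mathcal W_1(\mu^{x_1},\nu^{y_1})\Bigr].
\]

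Next I would compare the two bracketed functionals of $\gamma_1$. Disintegrating $\gamma_1$ along its second marginal $\nu_1$, they coincide once we know that for every $\gamma_1\in\Pi(\mu_1,\nu_1)$ and $\nu_1$-a.e.\ $y_1$
\[\textstyle
\mathcal W_1\bigl(\int\gamma_1^{y_1}(dx_1)\mu^{x_1},\nu^{y_1}\bigr)=\int\gamma_1^{y_1}(dx_1)\,\mathcal W_1(\mu^{x_1},\nu^{y_1}).
\]
Using the line formula $\mathcal W_1(\rho,\sigma)=\int_\R|F_\rho(z)-F_\sigma(z)|\,dz$ and the linearity of $\rho\mapsto F_\rho(z)$, the left-hand side equals $\int_\R\bigl|\int\gamma_1^{y_1}(dx_1)\bigl(F_{\mu^{x_1}}(z)-F_{\nu^{y_1}}(z)\bigr)\bigr|\,dz$. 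Here the assumption enters: for each fixed $z$ and $y_1$ the map $x_1\mapsto F_{\mu^{x_1}}(z)-F_{\nu^{y_1}}(z)$ has constant sign, so the outer modulus can be moved inside the $\gamma_1^{y_1}$-integral, and then Tonelli (everything nonnegative, values in $[0,\infty]$) yields $\int\gamma_1^{y_1}(dx_1)\int_\R|F_{\mu^{x_1}}(z)-F_{\nu^{y_1}}(z)|\,dz$, i.e.\ the right-hand side. Conceptually, the hypothesis says the family $\{F_{\mu^{x_1}}\}$ never ``straddles'' $F_{\nu^{y_1}}$, which is exactly what makes $\mathcal W_1(\cdot,\nu^{y_1})$ behave linearly along the mixture $m\mapsto\int m(dx_1)\mu^{x_1}$.

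The one load-bearing step is this sign-constancy argument; it is precisely the content of the hypothesis, and it is what prevents the inequality $value\eqref{Pc}\le value\eqref{Pbc}$ — which is generally strict, $\mathcal W_1$ being convex in its arguments so that pulling the modulus inside is a genuine improvement — from being strict. Everything else (the DPP of Theorem \ref{thm:DPPcausal}, the reduction of the one-step problems to transport on the line, measurability of the fiberwise monotone coupling, and Tonelli) is routine.
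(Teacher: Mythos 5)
Your proposal is correct and follows essentially the same route as the paper: apply the causal DPP of Theorem \ref{thm:DPPcausal} to reduce the inner step to $\mathcal W_1$ on the line, use the sign-constancy hypothesis to pull the modulus inside the $\gamma_1^{y_1}$-integral in $\int_z\bigl|\int\gamma_1^{y_1}(dx_1)F_{\mu^{x_1}}(z)-F_{\nu^{y_1}}(z)\bigr|\,dz$ (the paper's equation \eqref{no-Jensen}), and match the result with the bicausal DPP of Proposition \ref{prop:dynprog}. The only nit is that the fiberwise characterization of bicausal plans you invoke is Proposition \ref{prop:recursivecharacterization}, not Proposition \ref{prop:equiv charact}.
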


\begin{proof}
Borrowing from Theorem \ref{thm:DPPcausal}, call
$$\textstyle V_1(y_1,\gamma^{y_1})= \inf_{m\in \Pi (\int\gamma^{y_1}(dx_1)\mu^{x_1},\nu^{y_1})}\int |x_2-y_2|m(dx_2,dy_2),$$
which by e.g.\ \cite[eq.(2.48), p.75]{Villani} equals $\textstyle \int_z\left |F_{\int\gamma^{y_1}(dx_1)\mu^{x_1}}(z) -F_{\nu^{y_1}}(z)\right |dz$. But $F_{\int\gamma^{y_1}(dx_1)\mu^{x_1}}(z)=\int\gamma^{y_1}(dx_1)F_{\mu^{x_1}}(z) $, so by our technical assumption:
\begin{align}\textstyle
\int_z\left|F_{\int\gamma^{y_1}(dx_1)\mu^{x_1}}(z) -F_{\nu^{y_1}}(z)\right |dz &=\textstyle \int_z\left|\int_{x_1}\gamma^{y_1}(dx_1)F_{\mu^{x_1}}(z) -F_{\nu^{y_1}}(z)\right |dz \notag \\
&=\textstyle \int_z\int_{x_1}\gamma^{y_1}(dx_1)\left |F_{\mu^{x_1}}(z) -F_{\nu^{y_1}}(z)\right |dz ,\label{no-Jensen}
\end{align} 
and by Fubini's Theorem we get $\textstyle V_1(y_1,\gamma^{y_1})= \int_{x_1}\gamma^{y_1}(dx_1) \inf_{m\in\Pi(\mu^{x_1},\nu^{y_1})}\int|x_2-y_2|dm $ and again from Theorem \ref{thm:DPPcausal} we obtain that 
$$\textstyle Value\eqref{Pc}=\inf_{\gamma\in\Pi(p^1_*\mu,p^1_*\nu )}\int \gamma(dx_1,dy_1)\left\{ c_1(x_1,y_1)+\inf_{m\in\Pi(\mu^{x_1},\nu^{y_1})}\int|x_2-y_2|dm\right \},$$
which as we shall see in Proposition \ref{prop:dynprog}, equals the bicausal DPP, so we conclude.
\end{proof}
%

\section{The bicausal case}\label{Bicausal case}

We can obtain a similar DPP for bicausal transport plans. Let us introduce:
%
\begin{multline}\label{Dyn-Pbc}\textstyle
\inf_{ \gamma^1 \in \Pi(p^1_*\mu,p^1_*\nu) } \int \gamma^1(dx_1,dy_1) \inf_{\gamma^2 \in \Pi(\mu^{x_1}, \nu^{y_1} )} \int \gamma^2(dx_2,dy_2)\dots \\ \textstyle
\dots \inf_{\gamma^{N} \in \Pi(\mu^{x_1,\dots,x_{N-1}}, \nu^{y_1,\dots,y_{N-1}} )} \int \gamma^{N}(dx_N,dy_N) c(x_1,\dots,x_{N},y_1,\dots,y_{N}) . \tag{Dyn-Pbc}
\end{multline}
The previous recursive problem is motivated by the following structure result, the proof of which is  analogous to the causal case, so we omit it:
\begin{proposition}\label{prop:recursivecharacterization}
Let $\mu,\nu\in\mathcal{P}(\R^N)$.\\
If $\gamma\in\Pi_{bc}(\mu,\nu)$ is decomposed as in \eqref{successivedisintegration}, then the following conditions on the kernels hold:
\begin{itemize}
\item[(i)] $\bar{\gamma}\in \Pi(p^1_*\mu,p^1_*\nu)$, and 
\item[(ii)] successively for $t<N$ and for $\gamma$-almost every $x_1,\dots,x_{t},y_1,\dots,y_{t}$ holds $$\textstyle \gamma^{x_1,\dots,x_{t},y_1,\dots,y_{t}}(dx_{t+1},dy_{t+1}) \in \Pi(  \mu^{x_1,\dots,x_{t}}(dx_{t+1}), \nu^{y_1,\dots,y_{t}}(dy_{t+1}) ) .$$
\end{itemize}
Conversely, given regular kernels $$\textstyle\bar{\gamma}(dx_1,dy_1),\gamma^{x_1,y_1}(dx_2,dy_2),\dots,\gamma^{x_1,\dots,x_{N-1},y_1,\dots,y_{N-1}}(dx_N,dy_N),$$ satisfying the properties $(i)-(ii)$, the measure $\gamma$ constructed as in \eqref{successivedisintegration} belongs to $\Pi_{bc}(\mu,\nu)$.
\end{proposition}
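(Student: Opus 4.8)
The plan is to reduce everything to the kernel characterisation of causality established in Point 2 of Proposition \ref{prop:equiv charact}, applied simultaneously to $\gamma$ and to $e_*\gamma$, exploiting the fact that ``at level $t$'' the two successive disintegrations \eqref{successivedisintegration} condition on the \emph{same} $\sigma$-algebra, namely $\sigma(x_1,\dots,x_t,y_1,\dots,y_t)$.

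\textbf{Forward direction.} Let $\gamma\in\Pi_{bc}(\mu,\nu)$. Since $\gamma\in\Pi(\mu,\nu)$, its first kernel $\bar{\gamma}$ is the joint law of $(x_1,y_1)$, whence $\bar{\gamma}\in\Pi(p^1_*\mu,p^1_*\nu)$; this is $(i)$. For $(ii)$: applying \eqref{eq: marg mu} to $\gamma$ gives $p^1_*\gamma^{x_1,\dots,x_t,y_1,\dots,y_t}(dx_{t+1})=\mu^{x_1,\dots,x_t}(dx_{t+1})$ for $\gamma$-a.e.\ $(x_1,\dots,x_t,y_1,\dots,y_t)$. Next one observes that the disintegration of $e_*\gamma$ with respect to $(y_1,\dots,y_t,x_1,\dots,x_t)$ --- a kernel on $(y_{t+1},x_{t+1})$ --- is the $e$-image of $\gamma^{x_1,\dots,x_t,y_1,\dots,y_t}(dx_{t+1},dy_{t+1})$, since both are conditional laws with respect to $\sigma(x_1,\dots,x_t,y_1,\dots,y_t)$ up to relabelling of coordinates. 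Applying \eqref{eq: marg mu} to $e_*\gamma\in\Pi_c(\nu,\mu)$ then yields $p^2_*\gamma^{x_1,\dots,x_t,y_1,\dots,y_t}(dy_{t+1})=\nu^{y_1,\dots,y_t}(dy_{t+1})$. The two marginal identities together are precisely $(ii)$.

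\textbf{Converse.} Given kernels satisfying $(i)$--$(ii)$, let $\gamma$ be the unique Borel measure obtained by concatenating them as in \eqref{successivedisintegration} (legitimate by the disintegration/concatenation results recalled in Section \ref{DPP causal case}). One first checks $\gamma\in\Pi(\mu,\nu)$ by induction on $t$: integrating out $y_1,\dots,y_{t+1}$ and using the first-marginal half of $(ii)$ shows the law of $(x_1,\dots,x_{t+1})$ under $\gamma$ agrees with that under $\mu$; symmetrically, integrating out the $x$'s and using the second-marginal half of $(ii)$ shows the law of $(y_1,\dots,y_{t+1})$ agrees with that under $\nu$. Then $\gamma$ is causal, because $(ii)$ is \eqref{eq: marg mu} verbatim while \eqref{eq: marg nu} is automatic once the $y$-marginal is $\nu$. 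Finally $e_*\gamma$ is causal by the symmetric argument: identifying the level-$t$ disintegration of $e_*\gamma$ with the $e$-image of $\gamma^{x_1,\dots,x_t,y_1,\dots,y_t}$ as above, the second-marginal half of $(ii)$ becomes exactly condition \eqref{eq: marg mu} for $e_*\gamma$, and the companion condition is again automatic. Hence $\gamma\in\Pi_{bc}(\mu,\nu)$.

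\textbf{Main obstacle.} The one genuinely delicate point --- which I would spell out with care --- is the identification, modulo null sets, of the ``level-$t$'' disintegration of $e_*\gamma$ with the $e$-pushforward of that of $\gamma$: writing \eqref{successivedisintegration} for $e_*\gamma$ groups coordinates as $(y_1,x_1),(y_2,x_2),\dots$, yet the $\sigma$-algebra conditioned upon before drawing $(y_{t+1},x_{t+1})$ is $\sigma(y_1,x_1,\dots,y_t,x_t)=\sigma(x_1,\dots,x_t,y_1,\dots,y_t)$, so essential uniqueness of regular conditional probabilities makes the substitution rigorous. Everything else is bookkeeping of marginals and the routine translation between ``$\nu$-a.e.'' and ``$\gamma$-a.e.'' statements, exactly as in the causal case.
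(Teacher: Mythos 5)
Your proof is correct and follows exactly the route the paper intends when it omits the proof as ``analogous to the causal case'': one applies the kernel characterization of causality (Point 2 of Proposition \ref{prop:equiv charact}) to both $\gamma$ and $e_*\gamma$, using that the level-$t$ disintegrations of the two condition on the same $\sigma$-algebra $\sigma(x_1,\dots,x_t,y_1,\dots,y_t)$, so that by essential uniqueness of regular conditional probabilities one is the $e$-image of the other. You correctly isolate this identification as the only delicate point, and the marginal bookkeeping in the converse (telescoping via the $(y_1,\dots,y_t)$-independence of $p^1_*\gamma^{x_1,\dots,x_t,y_1,\dots,y_t}$) is sound.
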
 
The recursion corresponding to \eqref{Dyn-Pbc} (starting from $V_N^c:=c$) is:
\begin{multline}\textstyle
V_t^c(x_1,\dots,x_{t},y_1,\dots,y_{t})= \\\textstyle \inf_{\gamma^{t+1} \in \Pi(\mu^{x_1,\dots,x_{t}}, \nu^{y_1,\dots,y_{t}} )} \int \gamma^{t+1}(dx_{t+1},dy_{t+1}) V^c_{t+1}(x_1,\dots,x_{t+1},y_1,\dots,y_{t+1}), \label{valuefunctiongeneral}
\end{multline}
%
and so we want to compare the values of \eqref{Dyn-Pbc}, \eqref{Pbc} and
$$\textstyle V^c_0:=\inf_{ \gamma^1 \in \Pi(p^1_*\mu,p^1_*\nu) } \int V^c_1(x_1,y_1)\gamma^1(dx_1,dy_1).$$
%
%
We now give the DPP for the bicausal case, which appeared in \cite[Theorem 3]{Rueschendorf}; we will prove it with our methods, obtaining further attainability and some regularity of the value function. Observe that no Markovianity of $\mu$ or separability of the costs is needed, and that the above value function (and recursion) are more tractable than in the causal quasi-Markov case \eqref{weak-trans-cqm}.
\begin{proposition}\label{prop:dynprog}
Given a Borel bounded from below cost function $c$, we have that the recursive optimization problem \eqref{Dyn-Pbc} is well-defined, namely the successive integrals in \eqref{valuefunctiongeneral} are well-defined, and the values of \eqref{Dyn-Pbc}, \eqref{Pbc} and $V_0^c$ coincide. If further $c$ is l.s.c.\ and Assumption \ref{as:biweakcont} holds, then there is a bicausal optimizer and the value functions $V_t^c$ are all l.s.c.
\end{proposition}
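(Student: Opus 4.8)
The plan is to mirror the structure of the proof of Theorem \ref{thm:markovDPP}, but exploiting the fact that the bicausal recursion \eqref{valuefunctiongeneral} is genuinely simpler: the constraint sets $\Pi(\mu^{x_1,\dots,x_t},\nu^{y_1,\dots,y_t})$ depend only on the ``past'' coordinates and not on a free measure argument, so the value functions $V_t^c$ are ordinary functions of $(x_1,\dots,x_t,y_1,\dots,y_t)$ rather than functions on a space of measures. First I would establish measurability/well-posedness by reverse induction: starting from $V_N^c=c$ Borel (hence lower semianalytic), at each step one has $V_t^c(x_1,\dots,x_t,y_1,\dots,y_t)=\inf_{\gamma^{t+1}\in\Pi(\mu^{x_1,\dots,x_t},\nu^{y_1,\dots,y_t})}\int V_{t+1}^c\,d\gamma^{t+1}$. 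Here I would invoke, just as in Theorem \ref{thm:markovDPP}, that the set $\{(x_1,\dots,x_t,y_1,\dots,y_t,\gamma^{t+1}): \gamma^{t+1}\in\Pi(\mu^{x_1,\dots,x_t},\nu^{y_1,\dots,y_t})\}$ is Borel (because $(x_1,\dots,x_t)\mapsto\mu^{x_1,\dots,x_t}$ and $(y_1,\dots,y_t)\mapsto\nu^{y_1,\dots,y_t}$ are Borel kernels and $\Pi(\cdot,\cdot)$ is weakly closed), that the integral map $(x_1,\dots,x_t,y_1,\dots,y_t,\gamma^{t+1})\mapsto\int V_{t+1}^c\,d\gamma^{t+1}$ is lower semianalytic by \cite[Proposition 7.48]{BertsekasShreve}, and then that partial infimum over a Borel (hence analytic) fiber preserves lower semianalyticity by \cite[Proposition 7.47]{BertsekasShreve}. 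This shows each $V_t^c$ is lower semianalytic, hence universally measurable, so all integrals in \eqref{valuefunctiongeneral} are well-defined.

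Next I would prove the three-way equality among \eqref{Dyn-Pbc}, \eqref{Pbc}, and $V_0^c$. For the inequality $V_0^c\le\mathrm{value}\eqref{Pbc}$: given any $\gamma\in\Pi_{bc}(\mu,\nu)$, disintegrate it as in \eqref{successivedisintegration}; by Proposition \ref{prop:recursivecharacterization}(ii) each kernel $\gamma^{x_1,\dots,x_t,y_1,\dots,y_t}(dx_{t+1},dy_{t+1})$ is feasible in \eqref{valuefunctiongeneral}, so peeling off the integrals from the innermost outward and using the definition of $V_t^c$ as an infimum gives $\int c\,d\gamma\ge\int V_1^c\,d\bar\gamma\ge V_0^c$. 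For the reverse: using lower semianalyticity one performs, via the Jankov--von Neumann selection theorem \cite[Proposition 7.50(b)]{BertsekasShreve}, a universally measurable $\varepsilon$-optimal selection $(x_1,\dots,x_t,y_1,\dots,y_t)\mapsto \gamma^{t+1,\varepsilon}_{x_1,\dots,x_t,y_1,\dots,y_t}\in\Pi(\mu^{x_1,\dots,x_t},\nu^{y_1,\dots,y_t})$ at each stage, concatenate these kernels (using \cite[Propositions 7.44, 7.45]{BertsekasShreve}) together with an $\varepsilon$-optimal $\bar\gamma$ into a single Borel measure $\gamma_\varepsilon$, check via Proposition \ref{prop:recursivecharacterization} (converse direction) that $\gamma_\varepsilon\in\Pi_{bc}(\mu,\nu)$, and conclude $\mathrm{value}\eqref{Pbc}\le\int c\,d\gamma_\varepsilon\le V_0^c+N\varepsilon$; letting $\varepsilon\to0$ closes the loop. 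The identification with \eqref{Dyn-Pbc} is then just unwinding notation, since \eqref{Dyn-Pbc} is by definition the fully-expanded form of the recursion.

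Finally, under Assumption \ref{as:biweakcont} and with $c$ l.s.c., I would upgrade lower semianalyticity of the $V_t^c$ to lower semicontinuity and obtain attainment, again by reverse induction. The key point is that when $(x_1,\dots,x_t)\mapsto\mu^{x_1,\dots,x_t}$ and $(y_1,\dots,y_t)\mapsto\nu^{y_1,\dots,y_t}$ are weakly continuous, the set-valued map $(x_1,\dots,x_t,y_1,\dots,y_t)\mapsto\Pi(\mu^{x_1,\dots,x_t},\nu^{y_1,\dots,y_t})$ has closed graph and, restricting $\gamma^{t+1}$ to a fixed tight set, compact values; combined with joint lower semicontinuity of $(x_1,\dots,x_t,y_1,\dots,y_t,\gamma^{t+1})\mapsto\int V_{t+1}^c\,d\gamma^{t+1}$ (which follows from the inductive l.s.c.\ of $V_{t+1}^c$ and boundedness from below, via Portmanteau), the Berge-type maximum theorem \cite[Proposition 7.33]{BertsekasShreve} yields that $V_t^c$ is l.s.c.\ and that the infimum in \eqref{valuefunctiongeneral} is attained. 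Having optimizers rather than $\varepsilon$-optimizers, one redoes the selection-and-concatenation step with $\varepsilon=0$ (using a measurable selection of minimizers, available since the argmin correspondence is measurable) to build an exact bicausal optimizer for \eqref{Pbc}. I expect the main obstacle to be purely bookkeeping: verifying measurability of the various kernel-valued maps and keeping the descriptive-set-theoretic bounds (Borel vs.\ analytic vs.\ lower semianalytic) straight through the induction; the analytic content is entirely parallel to — and in fact lighter than — the causal quasi-Markov case already treated.
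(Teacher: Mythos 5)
Your proposal is correct and follows essentially the same route as the paper's proof: reverse induction establishing lower semianalyticity of the $V_t^c$ via the Borel graph of the constraint correspondence and \cite[Propositions 7.47, 7.48]{BertsekasShreve}, the inequality $V_0^c\le\mathrm{value}\eqref{Pbc}$ from Proposition \ref{prop:recursivecharacterization}, the reverse inequality from a universally measurable $\varepsilon$-optimal selection concatenated via \cite[Proposition 7.45]{BertsekasShreve}, and, under Assumption \ref{as:biweakcont}, the maximum theorem \cite[Proposition 7.33]{BertsekasShreve} plus an exact measurable selection to get lower semicontinuity and a bicausal optimizer. No gaps; the only difference is cosmetic ordering of the two inequalities.
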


\begin{proof}
Since $(x_1,\dots,x_t,y_1,\dots,y_t)\mapsto (\mu^{x_1,\dots,x_t},\nu^{y_1,\dots,y_t})$ is Borel and the set
$\{(p,q,\gamma):\gamma\in\Pi(p,q)\}$ is clearly closed, we get that 
$$\textstyle D_t:=\left\{(x_1,\dots,x_t,y_1,\dots,y_t,\gamma):\gamma\in\Pi(\mu^{x_1,\dots,x_t},\nu^{y_1,\dots,y_t}) \right \},$$
is Borel. As in the proof of Theorem \eqref{thm:markovDPP}, we start by proving recursively that the $V^c_t$'s are lower semianalytic (l.s.a). As in Step 2 therein one observes first that 
$$\textstyle (x_1,\dots,x_{N-1},y_1,\dots,y_{N-1},\gamma)\mapsto \int \gamma(dx_N,dy_N)c(x_1,\dots,x_{N},y_1,\dots,y_{N}),$$
is Borel, and so l.s.a. Since $V_{N-1}^c(x_1,\dots,x_{N-1},y_1,\dots,y_{N-1})$ is the infimum of this function over the fiber of $D_{N-1}$ at $(x_1,\dots,x_{N-1},y_1,\dots,y_{N-1})$ we get by \cite[Proposition 7.47]{BertsekasShreve} that  $V_{N-1}^c$ is l.s.a.\ and in particular universally measurable. The recursive step is obvious, and we get this result for each $V_{t}^c$, and so the integrals in \eqref{valuefunctiongeneral} are well-defined. Now take, as in Step 3 of the proof of Theorem \eqref{thm:markovDPP}, a universally measurable selection of $\varepsilon$-optimizers for $V_t^c$; call these $\gamma_{t,\varepsilon}^{x_1,\dots,x_{t},y_1,\dots,y_{t}} $. Build then $\Gamma_{\varepsilon}$ as the unique Borel measure that comes out of concatenating these (\cite[Proposition 7.45]{BertsekasShreve}). By construction each $\Gamma_{\varepsilon}$ is in $\Pi(\mu,\nu)$ (see Proposition \ref{prop:recursivecharacterization}) and $V_0^c+N\varepsilon\geq \int cd\Gamma_{\varepsilon}\geq value\eqref{Pbc}$ and so $V_0^c\geq value\eqref{Pbc}$. The reverse inequality follows trivially from Proposition \ref{prop:recursivecharacterization}.

If Assumption \ref{as:biweakcont} holds then $D_t$ is closed and so if further $c$ is l.s.c.\ we argue as in Step 5 of the proof of Theorem \eqref{thm:markovDPP}, proving that the value functions are l.s.c.\ as well. In turn, applying now \cite[Proposition 7.50(b)]{BertsekasShreve} we can obtain a universally measurable selection of optimizers $\gamma_t^{x_1,\dots,x_{t},y_1,\dots,y_{t}}$ and with them we build an optimal bicausal transport $\Gamma$ (just like the previous paragraph with $\varepsilon=0$).
\end{proof}

\medskip

Since at each step of the dynamic programming principle \eqref{Dyn-Pbc}, or equivalently \eqref{valuefunctiongeneral}, we have a usual optimal transport problem, we can write these in their dual formulation; for simplicity we assume that $c$ is l.s.c.\ and non-negative (see however \cite{MathiasWalter} for an extension). In effect, the value function at time $t$ for each $t=N-1,N-2,\dots,1$ is obtained recursively as:
%
%
%
\begin{multline*} \textstyle
V^c(t;x_1,\dots,x_{t},y_1\dots,y_{t}):= \\ \textstyle
\sup\limits_{  \substack{ \phi_{t+1},\psi_{t+1}\in C_b(\R),\\ \phi_{t+1}(x_{t+1}) + \psi_{t+1}(y_{t+1}) \leq V^c(t+1;x_1,\dots,x_{t+1},y_1,\dots,y_{t+1}) } } \left\{ \int \phi_{t+1}(x_{t+1})  \mu^{x_1,\dots,x_{t}}(dx_{t+1}) \right .  \\ \textstyle
\left .+ \int \psi_{t+1}(y_{t+1})  \nu^{y_1,\dots,y_{t}}(dy_{t+1})\right \},
\end{multline*}
so the value of (\ref{Dyn-Pbc}) is also given by
$$\textstyle V^c(0):=\sup\limits_{ \substack{\phi_{1},\psi_{1}\in C_b(\R) \\ \phi_{1}(x_{1}) + \psi_{1}(y_{1}) \leq V^c(1;x_1,y_1) }} \int \phi_{1}(x_{1})  p^1_*\mu(dx_{1}) + \int \psi_{1}(y_{1}) p^1_*\nu(dy_{1}) .$$
Observe that this recursive structure of the dual problem is not obvious from \eqref{Dbc}, but can be guessed a posteriori thanks to the apparent ``primal'' recursive structure.
A simpler picture of \eqref{Dyn-Pbc} arises when $c$ has separable structure as in \eqref{separable cost}; see \cite[Theorem 7.2]{PflugPichler}.\\


We give now the belated proof of Theorem \ref{main theorem}:

\begin{proof}[Proof of Theorem \ref{main theorem}] The result follows from Proposition \ref{main result bc} below, if the causal/bicausal equality is proved. {We stress that Condition \eqref{Ruesch+indep cond}, which is needed for Proposition \ref{main result bc}, is satisfied in the present case because the conditional distribution functions associated to $\mu$ do not depend on the conditioning argument}. We now prove the causal/bicausal equality. Start with $\gamma \in \Pi_c(\mu,\nu)$ and decompose it as in (\ref{successivedisintegration}). From Proposition \ref{prop:equiv charact}, we know that the following conditions ($t<N$) are satisfied by the kernels $ \bar{\gamma}(dx_1,dy_1), \gamma^{x_1,y_1}(dx_2,dy_2)$, up to $   \gamma^{x_1,\dots,x_{N-1},y_1,\dots,y_{N-1}}(dx_N,dy_N)$:
\begin{align}\label{first marginal}\textstyle
\gamma^{x_1,\dots,x_{t},y_1,\dots,y_{t}}(dx_{t+1}, \R) = \mu_{t+1}(dx_{t+1})
\end{align}
and
\begin{align}\label{second marginal}\textstyle
\int_{x_1,\dots,x_{t}} \gamma^{x_1,\dots,x_{t},y_1,\dots,y_{t}}(\R,dy_{t+1})\gamma^{y_1,\dots,y_{t}}(dx_1,\dots,dx_{t}) \,\,\, \,\, & = \,\,\, \nu^{y_1,\dots,y_{t}}(dy_{t+1}) \\ \nonumber
& = \gamma^{y_1,\dots,y_{t}}(\R, dy_{t+1}) 
\end{align}
We can rewrite (\ref{first marginal}) in the following way:
\begin{align}\label{modified first marginal}\textstyle
\int_{x_1,\dots,x_{t}} \gamma^{x_1,\dots,x_{t},y_1,\dots,y_{t}}(dx_{t+1}, \R)\gamma^{y_1,\dots,y_{t}}(dx_1,\dots,dx_{t}) \,\,\, \,\, & =\,\,\, \mu_{t+1}(dx_{t+1}) \\  \nonumber
& = \gamma^{y_1,\dots,y_{t}}(dx_{t+1},\R) 
\end{align}
Therefore, we can construct a new plan $\tilde{\gamma}$ as follows
\begin{equation*}\textstyle
\tilde{\gamma}(dx_1,\dots,dx_N,dy_1,\dots,dy_N)=\bar{\gamma}(dx_1,dy_1)\gamma^{y_1}(dx_2,dy_2)\dots \gamma^{y_1,\dots,y_{N-1}}(dx_N,dy_N), 
\end{equation*}
with
\begin{align}\label{kernels of gamma tilde}\textstyle
\gamma^{y_1,\dots,y_t} (dx_{t+1}, dy_{t+1}) = \int_{x_1,\dots,x_{t}} \gamma^{x_1,\dots,x_{t},y_1,\dots,y_{t}}(dx_{t+1}, dy_{t+1} )\gamma^{y_1,\dots,y_{t}}(dx_1,\dots,dx_{t}).
\end{align}
Due to (\ref{modified first marginal}) and (\ref{second marginal}), one can see that for any $t<N$ each kernel 
$$\textstyle \gamma^{y_1,\dots,y_t} \in \Pi(\mu_{t+1}(dx_{t+1}),\nu^{y_1,\dots,y_{t}}(dy_{t+1}) ). $$
Then, from Proposition (\ref{prop:recursivecharacterization}), we know that $\tilde{\gamma} \in \Pi_{bc}(\mu,\nu)$. Writing down the minimization over the kernels (\ref{kernels of gamma tilde}), we have exactly (\ref{Pbc}). Since the kernels of $\tilde{\gamma}$ and $\gamma$ are connected via (\ref{modified first marginal}) and (\ref{second marginal}), one can observe that the value of the bicausal transport problem is less or equal than the causal one, giving us the desired result.
\end{proof}

\begin{proof}[Proof of Corollary \ref{coro KR}]
Follows from Theorem \ref{main theorem} by observing that the map $$\textstyle (x_1,\dots,x_N,y_1,\dots,y_N)\mapsto (x_1,x_2-x_1,\dots,x_N-x_{N-1},y_1,y_2-y_1,\dots,y_N-y_{N-1}),$$
preserves causality when applied to $\gamma\in \Pi_c(\mu,\nu)$.
\end{proof}
\medskip

The proof of Theorem \ref{main theorem} rests on the following result, which needs Condition \eqref{Ruesch+indep cond} encompassing at the same time the independence condition \eqref{independence cond} and R\"uschendorf's  ``monotone regression dependence'' in \cite[Corollary 2]{Rueschendorf}. 
%
%

\begin{proposition}
\label{main result bc}
For each $t=1,\dots,N-2$, all $(x_1,\dots,x_t ),(y_1,\dots,y_t )$, and {$u\in\mathbb{R}$}, suppose
\begin{align}\label{Ruesch+indep cond} 
\left( F_{\mu^{x_1,\dots,x_t,\bar{x}}}(u) - F_{\mu^{{x}_1, \dots, {x}_t,x}}(u) \right)  \left( F_{\nu^{y_1,\dots,y_t,\bar{y}}}(u) - F_{\nu^{{y}_1, \dots, {y}_t,y}}(u) \right) \geq 0 ,
\end{align}
whenever $\bar{x}\geq x$ and $\bar{y}\geq y$. Assume further that
\begin{align}\notag\textstyle
c(x_1,\dots,x_{N},y_1,\dots,y_{N})&:= \sum_{t\leq N} c_t(x_t-y_t),
\end{align}
where each $c_t$ is convex. Then a solution to \eqref{Pbc} is given by the Knothe-Rosenblatt rearrangement \eqref{quantile transforms}.
Additionally, if $\mu$-a.s.\ all the conditional distributions of $\mu$ are atomless (e.g.\ if $\mu$ has a density), then this rearrangement is induced by the Monge map determined by \eqref{KRdef}.
%
%
%
%

\end{proposition}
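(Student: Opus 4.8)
The plan is to prove Proposition \ref{main result bc} by induction on the number of time steps $N$, using the bicausal dynamic programming principle of Proposition \ref{prop:dynprog}. The recursion \eqref{valuefunctiongeneral} reduces the $N$-step bicausal problem to nested one-dimensional transport problems, and at each step one transports $\mu^{x_1,\dots,x_t}(dx_{t+1})$ to $\nu^{y_1,\dots,y_t}(dy_{t+1})$ on the line against the accumulated cost. The starting point is the base case $N=1$: a one-dimensional transport problem with convex cost $c_1(x_1-y_1)$, whose solution is the monotone (quantile) coupling $x_1 \mapsto F_{\nu_1}^{-1}\circ F_{\mu_1}(x_1)$, i.e.\ the coupling of $U_1 \mapsto (F_{\mu_1}^{-1}(U_1), F_{\nu_1}^{-1}(U_1))$. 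This is classical; see \cite[Theorem 2.9]{Villani_Old}.

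For the inductive step I would proceed from the innermost integral of \eqref{valuefunctiongeneral} outward, or equivalently run the induction on the \emph{last} coordinate. Suppose the claim holds for $N-1$ steps. The outermost problem is $\inf_{\gamma^1\in\Pi(\mu_1,\nu_1)}\int \gamma^1(dx_1,dy_1) V_1^c(x_1,y_1)$, where $V_1^c(x_1,y_1)$ is the value of the $(N-1)$-step bicausal problem between $\mu^{x_1}$ and $\nu^{y_1}$ for the cost $\sum_{t=2}^N c_t(x_t-y_t)$. By the inductive hypothesis — whose hypotheses hold because \eqref{Ruesch+indep cond} is exactly the condition for the conditional laws, stated at every depth — the inner value is realized by the Knothe--Rosenblatt rearrangement of $\mu^{x_1}$ and $\nu^{y_1}$, so that $V_1^c(x_1,y_1) = \int_0^1 \sum_{t=2}^N c_t\bigl(X_t^*(x_1,u_{2:N}) - Y_t^*(y_1,u_{2:N})\bigr)\, du_{2:N}$, with $X_t^*,Y_t^*$ defined by the quantile recursion started from $x_1$ resp.\ $y_1$. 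The key point is then to show that $(x_1,y_1)\mapsto V_1^c(x_1,y_1)$ is such that the monotone coupling of $\mu_1,\nu_1$ is optimal for it; concretely, I would show that $V_1^c$ has (up to the obvious convex term $c_1(x_1-y_1)$ which is handled directly) a supermodularity / Spence--Mirrlees structure in $(x_1,y_1)$, so that monotone rearrangement of the first marginals is optimal, and then verify that the composition of monotone maps at each level is precisely the Knothe--Rosenblatt map \eqref{quantile transforms}. Here Condition \eqref{Ruesch+indep cond} is what guarantees that $\bar x \geq x$ pushes the conditional quantile functions in a monotone direction, so that increasing $x_1$ moves the entire tail $(X_2^*,\dots,X_N^*)$ monotonically (stochastically), and likewise for $y_1$; combined with convexity of the $c_t$'s this yields the required monotonicity of $V_1^c$ in $(x_1,y_1)$. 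This is the generalization of \cite[Corollary 2]{Rueschendorf} alluded to in the text.

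For the Monge statement, once the optimizer is identified as the law of $(X_1^*,\dots,X_N^*,Y_1^*,\dots,Y_N^*)$ with common uniforms $U_1,\dots,U_N$, I would note that if all conditional distributions of $\mu$ are atomless then each $F_{\mu^{x_1,\dots,x_{t-1}}}$ is continuous and $F_{\mu^{x_1,\dots,x_{t-1}}}(X_t^*) = U_t$ holds $\mu$-a.s.; hence $U_t$ is recovered as a measurable function of $X_1^*,\dots,X_t^*$, and substituting back into the formula for $Y_t^*$ yields $Y_t^* = T^t(X_t^*; X_1^*,\dots,X_{t-1}^*)$ with $T^t$ as in \eqref{KRdef}. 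Thus the bicausal optimizer is $(\mathrm{id}\times T)_*\mu$, a Monge coupling, and it is adapted (hence bicausal, cf.\ Remark \ref{rem:first_Monge}) since $T$ is triangular by construction. The main obstacle I anticipate is the supermodularity/monotonicity argument for $V_1^c$: one must carefully propagate Condition \eqref{Ruesch+indep cond} through the nested quantile construction to conclude that the map $(x_1,y_1)\mapsto V_1^c(x_1,y_1)$ admits the monotone coupling as optimizer — this requires a clean statement that "coupling via the same uniform at every remaining level" is simultaneously optimal for all the convex costs $c_t$, and that \eqref{Ruesch+indep cond} makes the cross-derivative sign work out uniformly. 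The convexity of each $c_t$ enters precisely to ensure $c_t(a-b)$ is supermodular in $(a,b)$, so that each layer of the recursion prefers comonotone rearrangement.
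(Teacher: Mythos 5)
Your overall strategy (bicausal DPP plus induction, with the monotone coupling solving each one-dimensional layer) is sound and is essentially the route of R\"uschendorf's proof, but it is \emph{not} the route the paper takes, and more importantly it contains a genuine gap at exactly the point you flag as ``the main obstacle'': you never establish that $(x_1,y_1)\mapsto V_1^c(x_1,y_1)$ is submodular (equivalently, that the comonotone coupling of the first marginals is optimal for it). That claim is the entire content of the proposition beyond the classical one-dimensional base case, and propagating Condition \eqref{Ruesch+indep cond} through the nested quantile construction to get it is nontrivial: one must show that the \emph{value} of an $(N-1)$-step nested transport problem between $\mu^{x_1}$ and $\nu^{y_1}$ inherits the right cross-monotonicity from first-order stochastic monotonicity of the conditional kernels, uniformly over all the convex costs $c_2,\dots,c_N$ simultaneously. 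Asserting that ``the cross-derivative sign works out'' is a statement of the desired conclusion, not a proof of it. Note also that \eqref{Ruesch+indep cond} only forces the two CDF differences to have the \emph{same} sign for each fixed $u$; it does not by itself give a single stochastic ordering uniform in $u$, which is what a clean supermodularity computation would want, so even the shape of the lemma you need requires care.

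For contrast, the paper's own proof avoids this issue entirely. In the case it proves from scratch (product marginals, which is all that Theorem \ref{main theorem} needs), the inner value $\inf_{\gamma^2\in\Pi(\mu_2,\nu^{y_1})}\int c_2(x_2-y_2)\,d\gamma^2$ does not depend on $x_1$ at all, so the outer problem is a plain one-dimensional transport problem with cost $c_1(x_1-y_1)$ plus a function of $y_1$ alone, trivially solved by the monotone coupling; the general case under \eqref{Ruesch+indep cond} is then delegated to \cite[Corollary 2]{Rueschendorf} rather than reproved. If you want a self-contained argument you must either (a) restrict to the product case and observe this decoupling explicitly, or (b) actually prove the submodularity lemma, e.g.\ by showing inductively that for all convex $h$ the map $(x_1,y_1)\mapsto \int_0^1 h\bigl(F^{-1}_{\mu^{x_1}}(u)-F^{-1}_{\nu^{y_1}}(u)\bigr)du$ satisfies the rectangle inequality under \eqref{Ruesch+indep cond}, and that this property survives one more layer of the recursion. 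Your Monge-map argument (recovering $U_t=F_{\mu^{X_1^*,\dots,X_{t-1}^*}}(X_t^*)$ when the conditionals are atomless) is correct and matches the paper.
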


\begin{proof}
First, we give the proof for the case when the source measure is the product of its marginals, for $N=2$. From Proposition \ref{prop:dynprog} we know that the values of \eqref{Pbc} and \eqref{Dyn-Pbc} coincide, and by assumption $\mu(dx_1,dx_2) = \mu_1(dx_1)\mu_2(dx_2)$, so \eqref{Dyn-Pbc} has the form
\begin{multline}\label{eq:Dyn-Pbc n=2}\textstyle
\inf_{ \gamma^1 \in \Pi(\mu_1,p^1_*\nu) } \int \gamma^1(dx_1,dy_1) \left[ c_1(x_1-y_1) + \inf_{\gamma^2 \in \Pi(\mu_2, \nu^{y_1} )} \int c_2(x_2-y_2) \gamma^2(dx_2,dy_2)  \right]. 
\end{multline}
From classical optimal transport (see \cite{Villani}) it is known that 
$$\textstyle \inf_{\gamma^2 \in \Pi(\mu_2, \nu^{y_1} )} \int c_2(x_2-y_2) \gamma^2(dx_2,dy_2) = \int c(F_{\mu_2}^{-1} (u_2)- F_{\nu^{y_1}}^{-1} (u_2)) d u_2. $$
The last value does not depend on $x_1$, therefore, it is constant for the minimization with respect to $\gamma^1$ in (\ref{eq:Dyn-Pbc n=2}), and we conclude that the pair $(X_1^*,Y_1^*)$ will be the optimizer at this step, which in turn allow to construct $(X_2^*,Y_2^*)$ by the previous considerations. For general $N$ the result follows by induction, iterating the arguments so far. When $\mu$ is not the product of its marginals, yet the monotone regression assumption of \cite{Rueschendorf} holds, the proof of this result is given in \cite[Corollary 2]{Rueschendorf}. Finally, it is easy to observe that Condition \ref{Ruesch+indep cond} unifies exactly the two cases described.
\end{proof}
\medskip

We stress that in case the independent marginals condition \ref{independence cond} does not hold, and even if condition \ref{Ruesch+indep cond} is true, Example \ref{counerexample} shows that causal and bicausal values may differ.

\medskip

The explicit form of the optimizers in Theorem \ref{main theorem} is obviously only true for $\R$-valued processes. Interestingly, if the transport problem consisted in mapping (in a bicausal way) $\R^L$-valued process, the recursive structure is just as in the case $L=1$ and so if $\mu$ were the product of its marginals (each of them in $\mathcal{P}(\R^L)$ now) we would get a similar conclusion with the role of the monotone rearrangements taken by general Brenier maps, under suitable conditions.

\begin{remark}\label{rem:second_Monge}
We reassure the reader that the Knothe-Rosenblatt rearrangements in the form \eqref{quantile transforms} are always bicausal, and in the Monge form \eqref{KRdef} this is also the case as soon as all conditional distributions of the source measure are atomless. We illustrate the argument for \eqref{quantile transforms} as follows: {for each bounded Borel $g(\cdot,\cdot)$ we may define $$\textstyle y_1\mapsto  G(y_1):=\int_0^1 g(y_1,F^{-1}_{\nu^{y_1}}(v))dv,$$ so that denoting $X_1^*:=F^{-1}_{\mu_1}(U_1)$, $Y_1^*:=F^{-1}_{\nu_1}(U_1)$ and $Y_2^*:=F^{-1}_{\nu^{F^{-1}_{\nu_1}(U_1)}}(U_2)$, we get 
\begin{align*}\textstyle
E[f(X_1^*)g(Y_1^*,Y_2^*)]&= \textstyle
\int_0^1\int_0^1 f(F^{-1}_{\mu_1}(u_1))g\Bigl(F^{-1}_{\nu_1}(u_1),F^{-1}_{\nu^{F^{-1}_{\nu_1}(u_1)}}(u_2) \Bigr )du_2du_1\\ &=\textstyle
\int_0^1\int_0^1 f(F^{-1}_{\mu_1}(u_1)) G\left(F^{-1}_{\nu_1}(u_1)\right ) du_1\\
&=\textstyle E[\, E[f(X_1^*)|Y_1^*]\,G(Y_1^*) ]\\
&=\textstyle E[\, E[f(X_1^*)|Y_1^*]\,g(Y_1^*,Y_2^*) ].
\end{align*}
Thus the law of $X_1^*$ given $(Y_1^*,Y_2^*)$, equals the law of $X_1^*$ given $Y_1^*$.} The same holds inverting the roles of $\mu$ and $\nu$ and going to greater time indices. As for \eqref{KRdef}, the argument actually follows directly upon noticing for example that for $\mu-$a.e.\ $x_1$ the measure $[F_{\mu^{x_1}}]_{*}\mu^{x_1}$ is equal to Lebesgue measure on $[0,1]$, under the given assumptions. {Even though these considerations are not explicit in \cite{Rueschendorf}, they underpin some of the results therein.}
\end{remark}

{
\subsection{A characterization of the Knothe-Rosenblatt rearrangement}
\label{sec Bogashev}

As we have seen, the Knothe-Rosenblatt rearrangement \eqref{quantile transforms}-\eqref{KRdef} appears quite naturally in our setting. In light of Remark \ref{rem:second_Monge}, we would like to characterize it as the unique bicausal transport plan with a desirable ``increasingness'' property. The correct concept turns out to be that of increasing triangular transformations, found in \cite{triangular}, which we recall:

\begin{definition}
A map $T:\R^N\to \R^N$ is an increasing triangular transformations (in short ITT) with source $\mu$ if for each $t\in\{1,\dots,N\}$ there is a function $$\R^t\ni (x_1,\dots,x_t)\mapsto T^t(x_t;x_1,\dots,x_{t-1})\in\R,$$ such that $T^t(\cdot\,;x_1,\dots,x_{t-1})$ is non-decreasing\footnote{For $t=1$ one should understand $T^1$ as a non-decreasing function of $x_1$ only.} for $\mu$-almost every $(x_1,\dots,x_{t-1})$ and further
$$T(x_1,\dots,x_N)\, =\, (T^1(x_1),T^2(x_2;x_1),\dots,T^N(x_N;x_1,\dots,x_{N-1})) ,\,\,\, \mu-a.s.$$
\end{definition}

Clearly the Knothe-Rosenblatt map \eqref{KRdef} is an ITT. As \cite[Lemma 2.1]{triangular} shows, given that all conditional distributions of $\mu$ and $\nu$ are atomless, there is a canonical ITT that pushes $\mu$ into $\nu$ which is unique up to $\mu$-negligible sets: the Knothe-Rosenblatt map. Under the same conditions on $\mu$ and $\nu$, this characterization has a geometric counterpart; as shown in e.g. \cite[Chapter 2.3, Remark 2.20]{Santambrogiobook}, the Knothe-Rosenblatt map is the unique map which is increasing in the lexicographical order of $\R^N$ and pushes $\mu$ into $\nu$. Under the same assumptions \cite{CGS_KR_to_Brenier} obtains the Knothe-Rosenblatt map as a natural limit of Brenier maps\footnote{Beware that \cite{Santambrogiobook,CGS_KR_to_Brenier} read coordinates from $N$ to $1$ rather than from $1$ to $N$ as we do.}. We ask: what distinguishes/characterizes the Knothe-Rosenblatt map without any assumption on $\nu$? Here is the first result:

\begin{proposition}
\label{prop charact KR}
Suppose all conditional distributions of $\mu$ are atomless. Then the Knothe-Rosenblatt map from $\mu$ into $\nu$, defined in \eqref{KRdef}, is the unique bicausal transport plan between $\mu$ and $\nu$ which is induced by an increasing triangular transformation.
\end{proposition}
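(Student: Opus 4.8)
The plan is as follows. The existence part of the statement is already available: by Remark~\ref{rem:second_Monge}, under the present atomlessness hypothesis on the conditional distributions of $\mu$ the Knothe--Rosenblatt map $T^{KR}$ of \eqref{KRdef} is an increasing triangular transformation whose graph coupling $\gamma^{T^{KR}}=(id\times T^{KR})_*\mu$ is bicausal. Hence the whole content of Proposition~\ref{prop charact KR} is the \emph{uniqueness} assertion, which I would phrase as: if $T=(T^1,\dots,T^N)$ is any increasing triangular transformation with $T_*\mu=\nu$ such that $\gamma^T:=(id\times T)_*\mu\in\Pi_{bc}(\mu,\nu)$, then $T^t(\cdot\,;x_1,\dots,x_{t-1})$ coincides for $\mu$-a.e.\ $(x_1,\dots,x_{t-1})$ with the $t$-th component of $T^{KR}$, for every $t$. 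I would prove this by induction on $t$.

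For the inductive step (the base case $t=1$ being identical, with $\bar\gamma$ in place of the kernel below), assume $T^s=T^{KR,s}$ $\mu$-a.s.\ for all $s<t$. Since $T$ is triangular, each $y_s=T^s(x_s;x_1,\dots,x_{s-1})$ is a function of $x_1,\dots,x_s$ only, so conditioning $\gamma^T$ on $(x_1,\dots,x_{t-1},y_1,\dots,y_{t-1})$ amounts to conditioning on $(x_1,\dots,x_{t-1})$, and the corresponding kernel in the variables $(x_t,y_t)$ is the image of $\mu^{x_1,\dots,x_{t-1}}(dx_t)$ under $x_t\mapsto\bigl(x_t,\,T^t(x_t;x_1,\dots,x_{t-1})\bigr)$. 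By Proposition~\ref{prop:recursivecharacterization} applied to $\gamma^T\in\Pi_{bc}(\mu,\nu)$, this kernel lies in $\Pi\bigl(\mu^{x_1,\dots,x_{t-1}}(dx_t),\,\nu^{y_1,\dots,y_{t-1}}(dy_t)\bigr)$ for $\mu$-a.e.\ $(x_1,\dots,x_{t-1})$; its first marginal is automatically $\mu^{x_1,\dots,x_{t-1}}$, so the effective constraint is on the second marginal:
\begin{equation*}\textstyle
T^t(\cdot\,;x_1,\dots,x_{t-1})_*\,\mu^{x_1,\dots,x_{t-1}}\;=\;\nu^{y_1,\dots,y_{t-1}},\qquad y_s=T^s(x_s;x_1,\dots,x_{s-1}).
\end{equation*}
By the induction hypothesis $y_s=T^{KR,s}(x_s;x_1,\dots,x_{s-1})$, so the measure on the right is precisely the conditional of $\nu$ appearing in \eqref{KRdef}. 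Now $\mu^{x_1,\dots,x_{t-1}}$ is atomless by hypothesis, hence $F_{\mu^{x_1,\dots,x_{t-1}}}$ is continuous and pushes $\mu^{x_1,\dots,x_{t-1}}$ forward to the Lebesgue measure on $[0,1]$; combined with the monotonicity of $T^t(\cdot\,;x_1,\dots,x_{t-1})$ and the displayed marginal identity this forces $T^t(\cdot\,;x_1,\dots,x_{t-1})=F^{-1}_{\nu^{y_1,\dots,y_{t-1}}}\circ F_{\mu^{x_1,\dots,x_{t-1}}}$ $\mu^{x_1,\dots,x_{t-1}}$-a.s., i.e.\ exactly the $t$-th component of $T^{KR}$. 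Since this holds for $\mu$-a.e.\ $(x_1,\dots,x_{t-1})$, Fubini's theorem gives $T^t=T^{KR,t}$ $\mu$-a.s., closing the induction.

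The one place requiring care --- and the step I expect to be the only real obstacle --- is the passage from the $\gamma^T$-a.e.\ statement in Proposition~\ref{prop:recursivecharacterization} to a $\mu$-a.e.\ statement in $(x_1,\dots,x_{t-1})$, together with the claim that the level-$(t-1)$ kernel of $\gamma^T$ is, for a.e.\ conditioning point, exactly the graph coupling of $\mu^{x_1,\dots,x_{t-1}}$ by $T^t(\cdot\,;x_1,\dots,x_{t-1})$; this is bookkeeping but it genuinely uses the triangular (equivalently, adapted) structure of $T$, and one should record the universal measurability of $(x_1,\dots,x_{t-1})\mapsto T^t(\cdot\,;x_1,\dots,x_{t-1})$ as a kernel-valued map. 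Conceptually, the point worth stressing is that only atomlessness of the conditional distributions of the \emph{source} $\mu$ enters: that is exactly what makes the monotone rearrangement the \emph{unique} non-decreasing transport map, irrespective of the target, so --- unlike the triangular-transformation uniqueness of \cite[Lemma 2.1]{triangular} --- no assumption whatsoever is placed on $\nu$.
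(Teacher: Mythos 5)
Your proposal is correct and follows essentially the same route as the paper's proof: apply Proposition \ref{prop:recursivecharacterization}(ii) to the bicausal plan $\gamma^T$ to see that $T^t(\cdot\,;x_1,\dots,x_{t-1})$ pushes the atomless measure $\mu^{x_1,\dots,x_{t-1}}$ onto $\nu^{y_1,\dots,y_{t-1}}$ monotonically, identify it with the monotone rearrangement, and induct on $t$. You merely spell out the disintegration bookkeeping and the role of the induction hypothesis more explicitly than the paper does.
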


\begin{proof}
Let $\gamma$ be a bicausal transport plan between $\mu$ and $\nu$. By Proposition \ref{prop:recursivecharacterization}.(ii) we know that $\gamma^{x_1,\dots,x_{t-1},y_1,\dots,y_{t-1}}\in\Pi(\mu^{x_1,\dots,x_{t-1}},\nu^{y_1,\dots,y_{t-1}} )$. But if $\gamma$ is further induced by an ITT (say $T$) this means that $T^t(\cdot; x_1,\dots,x_{t-1})$ is pushing $\mu^{x_1,\dots,x_{t-1}}$ into $\nu^{y_1,\dots,y_{t-1}}$ in an increasing way (of course, under the understanding that $(y_1,\dots,y_{t-1})=(T^1,\dots,T^{t-1})(x_1,\dots,x_{t-1})$). This immediately shows that $T^1 = F^{-1}_{\nu_1}\circ F_{\mu_1}$ but for a $\mu$-null set. A straightforward induction argument proves that $\mu$-a.s.\ the Knothe-Rosenblatt map and $T$ are equal.
\end{proof}

The relevance of this result is that, as the counter-example after the proof of \cite[Lemma 2.1]{triangular} reveals, when $\nu$ has atoms in its conditional distributions then there may be many ITT's from $\mu$ into $\nu$. The same counter-example proves that the Knothe-Rosenblatt map need not be increasing in lexicographical order in this case, so it cannot be characterized in terms of lexicographical order in this generality. Furthermore, under this pathology of $\nu$, the Knothe-rosenblatt map need not be the natural limit of Brenier maps (see \cite[Example 2.26]{Santambrogiobook}). So, as far as we know, Proposition \ref{prop charact KR} is the only robust characterization of the Knothe-Rosenblatt map. We finally stress that this result can be extended to the case where $\mu$ and $\nu$ are arbitrary (so the Knothe-Rosenblatt rearrangement \eqref{quantile transforms} need not be induced by a map) provided one generalizes the definition of ITT to transport plans:

\begin{definition}
$\gamma\in\P(\R^N\times\R^N)$ is an increasing triangular transport if $\gamma(dx_1,dy_1)$, as well as $\gamma^{x_1,\dots,x_{t-1},y_1,\dots,y_{t-1}}(dx_t,dy_t)$ for $\gamma$-almost every $(x_1,\dots,x_{t-1},y_1,\dots,y_{t-1})$, all have monotone support as bivariate measures.\footnote{A set $\Gamma\subset \R^2$ is called monotone if $(x,y),(\bar{x},\bar{y})\in\Gamma$ and $x<\bar{x}$ implies $y\leq\bar{y}$. }
\end{definition}

\begin{proposition}
\label{prop charact KR general}
The Knothe-Rosenblatt rearrangement from $\mu$ into $\nu$, defined in \eqref{quantile transforms}, is the unique bicausal transport plan between $\mu$ and $\nu$ which is an increasing triangular transport.
\end{proposition}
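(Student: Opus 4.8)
The plan is to adapt the proof of Proposition \ref{prop charact KR}, the only genuinely new ingredient being the uniqueness of the monotone coupling of two \emph{arbitrary} probability measures on the line. First I would record that classical fact: for $\alpha,\beta\in\mathcal{P}(\R)$ there is exactly one $\pi\in\Pi(\alpha,\beta)$ whose support is a monotone set, namely the comonotone coupling $(F^{-1}_\alpha,F^{-1}_\beta)_*\mathrm{Leb}$, where $\mathrm{Leb}$ is Lebesgue measure on $[0,1]$. (Indeed, if $supp(\pi)$ is monotone then $F_\alpha(x-)\le F_\beta(y)$ and $F_\beta(y-)\le F_\alpha(x)$ hold on it, and this already pins $\pi$ down.) I would also note that the closure of a monotone subset of $\R^2$ is again monotone, so ``concentrated on a monotone set'' and ``having monotone support'' amount to the same requirement in the definition of an increasing triangular transport.

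For existence, I would check that the Knothe--Rosenblatt rearrangement \eqref{quantile transforms} is bicausal and an increasing triangular transport. That $\gamma^{KR}\in\Pi(\mu,\nu)$ is the usual quantile-construction computation, and bicausality is precisely Remark \ref{rem:second_Monge}. For the triangular property, conditioning the random vector $(X^*_1,\dots,X^*_N,Y^*_1,\dots,Y^*_N)$ on its first $2(t-1)$ coordinates $(x_1,\dots,x_{t-1},y_1,\dots,y_{t-1})$ leaves, because $U_t$ is independent of $U_1,\dots,U_{t-1}$, the pair $\bigl(F^{-1}_{\mu^{x_1,\dots,x_{t-1}}}(U_t),F^{-1}_{\nu^{y_1,\dots,y_{t-1}}}(U_t)\bigr)$ with $U_t$ uniform; hence the kernel $\gamma^{KR,\,x_1,\dots,x_{t-1},y_1,\dots,y_{t-1}}(dx_t,dy_t)$ is the comonotone coupling of $\mu^{x_1,\dots,x_{t-1}}$ and $\nu^{y_1,\dots,y_{t-1}}$, while $\bar\gamma^{KR}=(F^{-1}_{\mu_1},F^{-1}_{\nu_1})_*\mathrm{Leb}$; both have monotone support.

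For uniqueness, let $\gamma\in\Pi_{bc}(\mu,\nu)$ be an increasing triangular transport. By Proposition \ref{prop:recursivecharacterization}(ii) the kernel $\gamma^{x_1,\dots,x_{t-1},y_1,\dots,y_{t-1}}$ lies in $\Pi(\mu^{x_1,\dots,x_{t-1}},\nu^{y_1,\dots,y_{t-1}})$ for a.e.\ conditioning, and by hypothesis it has monotone support; by the uniqueness fact above it must therefore equal $(F^{-1}_{\mu^{x_1,\dots,x_{t-1}}},F^{-1}_{\nu^{y_1,\dots,y_{t-1}}})_*\mathrm{Leb}$, and likewise $\bar\gamma=(F^{-1}_{\mu_1},F^{-1}_{\nu_1})_*\mathrm{Leb}$. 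An induction on $t$ along the successive disintegration \eqref{successivedisintegration} then identifies $\gamma$ with $\gamma^{KR}$: once $\gamma$ and $\gamma^{KR}$ agree on the first $2(t-1)$ coordinates, ``$\gamma$-a.e.'' and ``$\gamma^{KR}$-a.e.'' coincide for properties of those coordinates, so by the previous sentence together with the existence step the two measures carry the very same conditional kernel on $(x_t,y_t)$, hence agree on the first $2t$ coordinates.

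The routine parts are the measurability needed to run the disintegration (covered by Proposition \ref{prop:recursivecharacterization} and the machinery of Section \ref{DPP causal case}) and the verification that $\gamma^{KR}$ has the prescribed marginals. The one genuinely delicate point, which I would spell out carefully, is the general-marginals uniqueness of the monotone coupling: precisely when $\nu$ has atoms in its conditional laws --- the pathology that, as recalled after Proposition \ref{prop charact KR}, breaks the lexicographic-order and Brenier-limit characterisations --- one must check that monotone support nonetheless forces comonotonicity. Everything else is bookkeeping around the recursion.
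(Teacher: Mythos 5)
Your proof is correct and follows exactly the route the paper intends: the paper omits this proof as ``essentially the same'' as that of Proposition \ref{prop charact KR}, and your argument is precisely the plan-level analogue --- Proposition \ref{prop:recursivecharacterization}(ii) identifies each successive kernel as a coupling of the conditional laws, monotone support then forces it to be the comonotone (quantile) coupling, and induction along \eqref{successivedisintegration} identifies the plan with \eqref{quantile transforms}. You correctly isolate the one ingredient not needed in the map case, namely the uniqueness of the coupling with monotone support for \emph{arbitrary} (possibly atomic) marginals on the line, which is the classical Hoeffding--Fr\'echet fact.
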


The proof is essentially the same as in Proposition   \ref{prop charact KR}, so we omit it.
}

\subsection{Digression into a classical functional inequality}\label{sec Talagrand}

Another instance where Theorem \ref{main theorem} (and Condition \ref{independence cond}) comes in handy, is the following interpretation of the proof of Talagrand's $\mathcal{T}_2$ inequality. Out point is to show how the ideas discussed in this article are pertinent to several fields in mathematics. First, recall (see \cite{Talagrand}): given a unit standard Gaussian measure $G^N$ and any other measure $\nu$ on $\R^N$, the following holds:
\begin{equation}\textstyle
2 Ent(\nu|G^N)\geq \mathcal{T}_2(G^N,\nu),\label{T2}\tag{T2}
\end{equation}  
where $Ent$ denotes relative entropy and $\mathcal{T}_2(\cdot,\cdot)$ is the value of the optimal transport problem with quadratic cost. Equality holds iff $\nu$ is an affine translate of $G^N$. We establish here the related bicausal inequality  
\begin{equation}\textstyle
2 Ent(\nu|G^N)\geq \mathcal{T}_{bc,2}(G^N,\nu), \mbox{ for all }\nu\in\mathcal{P}(\R^N),\label{CT2}\tag{CT2}
\end{equation}
where $ \mathcal{T}_{bc,2}(G^N,\nu)$ is the value of the optimal bicausal transport problem for quadratic cost. This clearly implies \eqref{T2} for $G^N$. It is the DPP for the bicausal transport problem that replaces the role of the usual tensorization trick; strictly speaking, the DPP is just giving a name to an intermediate step in Talagrand's well-known proof. The proof given here applies of course to other product measures, and in that setting is reminiscent of \cite[Proposition 22.5]{Villani_Old}; we stick to the gaussian case only for the sake of concreteness and because this relates to the Wiener case in continuous time. 
\begin{proposition}\label{prop:functional inequality}
 For $G^N$ the unit standard Gaussian measure on $\R^N$, the bicausal transport-entropy inequality  \eqref{CT2} holds.
\end{proposition}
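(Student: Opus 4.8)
The plan is to reduce the $N$-dimensional bicausal inequality to the classical one-dimensional Talagrand inequality, using the bicausal dynamic programming principle in place of the usual tensorization step. We may assume $\nu\ll G^N$, since otherwise $Ent(\nu\,|\,G^N)=+\infty$ and there is nothing to prove; under this assumption all conditional laws $\nu^{y_1,\dots,y_{t-1}}$ are well defined, and the chain rule for relative entropy, together with the product structure $G^N=G^1\otimes\cdots\otimes G^1$, gives
$$\textstyle Ent(\nu\,|\,G^N)=\sum_{t=1}^{N}\int Ent\bigl(\nu^{y_1,\dots,y_{t-1}}\,|\,G^1\bigr)\,\nu(dy_1,\dots,dy_{t-1}),$$
where the $t=1$ term is $Ent(p^1_*\nu\,|\,G^1)$. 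The base case will simply be invoked from \cite{Talagrand}: $\mathcal{T}_2(G^1,\rho)\leq 2\,Ent(\rho\,|\,G^1)$ for every $\rho\in\mathcal P(\R)$.

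The core computation is to evaluate $\mathcal{T}_{bc,2}(G^N,\nu)$ by the recursion of Proposition \ref{prop:dynprog}. The quadratic cost $c=\sum_t(x_t-y_t)^2$ is continuous and non-negative, and since $G^N$ is a product, every source kernel entering \eqref{Dyn-Pbc}--\eqref{valuefunctiongeneral} equals $G^1$ irrespective of the conditioning. Running \eqref{valuefunctiongeneral} from $V_N^c=c$ inward, the innermost infimum is $\inf_{\gamma^N\in\Pi(G^1,\nu^{y_1,\dots,y_{N-1}})}\int (x_N-y_N)^2\,d\gamma^N=\mathcal{T}_2(G^1,\nu^{y_1,\dots,y_{N-1}})$, which depends only on $(y_1,\dots,y_{N-1})$. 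Feeding this into the next stage, the extra summand no longer involves $x_{N-1}$, so the marginal constraint forces it to integrate to $\int \mathcal{T}_2(G^1,\nu^{y_1,\dots,y_{N-1}})\,\nu^{y_1,\dots,y_{N-2}}(dy_{N-1})$ regardless of the coupling chosen, while the remaining infimum is again a one-dimensional quadratic transport problem of value $\mathcal{T}_2(G^1,\nu^{y_1,\dots,y_{N-2}})$. Iterating, the recursion telescopes and yields
$$\textstyle \mathcal{T}_{bc,2}(G^N,\nu)=\sum_{t=1}^{N}\int \mathcal{T}_2\bigl(G^1,\nu^{y_1,\dots,y_{t-1}}\bigr)\,\nu(dy_1,\dots,dy_{t-1}),$$
with the $t=1$ summand equal to $\mathcal{T}_2(G^1,p^1_*\nu)$. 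The joint measurability of $y\mapsto\mathcal{T}_2(G^1,\nu^{y_1,\dots,y_{t-1}})$ needed to make sense of these integrals is exactly the lower semianalyticity (and, under Assumption \ref{as:biweakcont}, which Gaussians satisfy, the lower semicontinuity) of the value functions granted by Proposition \ref{prop:dynprog}.

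Finally, I would apply the one-dimensional Talagrand inequality to each conditional measure, $\mathcal{T}_2(G^1,\nu^{y_1,\dots,y_{t-1}})\leq 2\,Ent(\nu^{y_1,\dots,y_{t-1}}\,|\,G^1)$, integrate against $\nu$, sum over $t$, and combine with the two displayed identities to obtain $\mathcal{T}_{bc,2}(G^N,\nu)\leq 2\,Ent(\nu\,|\,G^N)$, i.e.\ \eqref{CT2}; specializing to product couplings then recovers \eqref{T2} for $G^N$. I do not anticipate a genuine obstacle here: the only steps requiring care are the bookkeeping in the telescoping of the bicausal DPP for a separable cost over a product source — the place where, as noted in the text, the DPP substitutes for Talagrand's tensorization trick — and the routine verification of the entropy chain rule; the equality case, if one wishes to record it, follows by applying the one-dimensional equality characterization coordinatewise.
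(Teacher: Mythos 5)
Your proposal is correct and follows essentially the same route as the paper: the entropy chain rule over the product $G^1\otimes\cdots\otimes G^1$, the bicausal DPP of Proposition \ref{prop:dynprog} collapsing (because the source kernels are all $G^1$ and the inner value functions do not depend on the $x$-variables) into a sum of one-dimensional quadratic transport problems, and the one-dimensional Talagrand inequality applied conditionally. The only cosmetic difference is that you write out the full $N$-step telescoping identity explicitly, whereas the paper carries out the case $N=2$ and invokes the obvious induction.
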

\begin{proof}
 We start assuming \eqref{CT2} for $N=1$, which holds by \cite{Talagrand}, and prove it for $N=2$. The general inductive argument is then obvious and therefore skipped. By e.g.\ \cite[Lemma 10.3]{Varadhan} it is clear that for any $\nu\in\mathcal{P}(\R^2)$ holds:
\begin{align*}\textstyle
 Ent(\nu|G^2)&=Ent(\nu|G^1\otimes G^1) \\ & \textstyle =Ent(p^1_*\nu|G^1)+\int Ent(\nu^{y_1}| G^1) p^1_*\nu(dy_1) \geq \frac{\mathcal{T}_2(G^1,p^1_*\nu)}{2} +\int \frac{ \mathcal{T}_2(G^1,\nu^{y_1}) }{2} p^1_*\nu(dy_1) , 
\end{align*}
On the other hand, by the bicausal dynamic programming principle (Proposition \ref{prop:dynprog}) we get
\begin{align*}\textstyle
\mathcal{T}_{bc,2}(G^2,\nu) &= \textstyle\inf_{\gamma\in\Pi(G^1,p^1_*\nu)} \int \left\{|x_1-y_1|^2 +\mathcal{T}_2(G^1,\nu^{y_1}) \right\} \gamma(dx_1,dy_1)\\ &\textstyle = \mathcal{T}_2(G^1,p^1_*\nu) + \int\mathcal{T}_2(G^1,\nu^{y_1})p^1_*\nu(dy_1).
\end{align*} \end{proof}
\begin{remark}
It is clear that equality cannot generally hold in \eqref{CT2}, since for $N=1$ we have $\mathcal{T}_2(G,\nu)=\mathcal{T}_{bc,2}(G,\nu)$ and equality in \eqref{T2} is not usually the case. On the other hand, if $\nu$ is an affine translate of $G^N$, then of course $2 Ent(\nu|G^N)= \mathcal{T}_2(G^N,\nu)=\mathcal{T}_{bc,2}(G^N,\nu) $. More generally, there is equality in \eqref{CT2} provided $\nu^{y_1,\dots,y_t}(dy_{t+1})$ is an affine translate of $G^1$, for each $t$ and $\nu$-a.e.\ $y$. 
\end{remark}


\section{Some geometrical aspects of the bicausal case and connections with stochastic programming}
\label{lex_section}

At the end of this section we shall prove Theorem \ref{thm: funct nested}. We start however with a discussion about geometric properties of the space of probability measures endowed with a bicausal Wasserstein distance (equiv.\ nested distance). We notice first that the Knothe-Rosenblatt rearrangement offers a way to interpolate in a meaningful and non-linear way between stochastic programs. From Remark \ref{rem:second_Monge} we know that this rearrangement is bicausal, and as discussed in the previous section if all conditional probabilities $\mu^{x_1,\dots,x_n},\nu^{y_1,\dots,y_n}$ are atomless, then it is induced by a bicausal map which is characterized as the $\mu$-a.s.\ unique transformation increasing w.r.t.\ lexicographical order and which pushes forward $\mu$ onto $\nu$. Inspired by the concept of displacement interpolation/convexity in optimal transport (as in \cite[Chapter 5]{Villani}) let us define:
%
\begin{definition}
Let $\pi=\pi(\mu,\nu)$ be the Knothe-Rosenblatt rearrangement as in \eqref{quantile transforms}. The lexicographical displacement interpolation between $\mu$ and $\nu$ is then defined as the function
\begin{equation}
t\in [0,1]\mapsto [\mu,\nu]_t:= [(x,y)\mapsto(1-t)x+ty]_{*}\pi\in \mathcal{P}(\R^N), 
\label{lex_interpolation}
\end{equation}
If all conditional probabilities $\mu^{x_1,\dots,x_n}$ are atomless, we also have 
$$[\mu,\nu]_t = ([1-t]id+t T)_{*}\mu,$$
where $T=T(\mu,\nu)$ is the Knothe-Rosenblatt map defined in \eqref{KRdef} and $id$ denotes the identity map in $\R^N$.
\end{definition}

Thus we have that $[\mu,\nu]_0=\mu$ and $[\mu,\nu]_1=\nu$; therefore the name. We prove now that many stochastic optimization problems are concave along the curves given by \eqref{lex_interpolation}. Recall from \eqref{def stoch prog} that the value of a stochastic program with cost $H$ and noise distribution $\eta\in\mathcal{P}(\R^N) $ is given by
$$\textstyle v(\eta):=\inf_{u_1(\dot),\dots, u_N()} \int H(x_1,\dots,x_N,u_1(x_1),u_2(x_1,x_2),\dots,u_N(x_1,\dots,x_N))\eta(dx) .$$

\begin{theorem}\label{lexico_concavity}
Let $H:\R^N\times \R^N \to\R$ be bounded from below, concave in the first variable while convex in the second one. 
%
%
Then the functions $$\textstyle t\in[0,1]\mapsto v([\mu,\nu]_t)$$ are concave for each $\nu$ and $\mu$ such that all conditional probabilities $\mu^{x_1,\dots,x_n}$ are atomless, so we may say that $v(\cdot)$ is lexicographic-displacement concave. If further $v(\mu)$ is attained,
then 
$$\textstyle v(\nu)\leq v(\mu) + \inf_{\substack{ u^*\in \argmin(v(\mu))}} \int \inf_{\xi\in\partial_x H(x,u^*(x))}\xi\cdot[T(x)-x]\mu(dx).$$
\end{theorem}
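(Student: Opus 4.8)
The plan is to exploit the fact that, since the conditional distributions of $\mu$ are atomless, the lexicographical displacement interpolation is carried by the single map $T_t:=(1-t)\,\mathrm{id}+tT$, where $T$ is the Knothe--Rosenblatt map from $\mu$ to $\nu$, so that $[\mu,\nu]_t=(T_t)_*\mu$. The two structural features we rely on are: (i) the curve $t\mapsto T_t$ is affine, and for every fixed $x$ the curve $t\mapsto T_t(x)=(1-t)x+tT(x)$ is a segment; (ii) each $T_t$ is again an increasing triangular transformation with source $\mu$ — its $s$-th component $(1-t)x_s+tT^s(x_s;x_1,\dots,x_{s-1})$ is nondecreasing in $x_s$ and depends only on $x_1,\dots,x_s$ — so that $\gamma_t:=(\mathrm{id}\times T_t)_*\mu$ is a \emph{bicausal} transport plan between $\mu$ and $[\mu,\nu]_t$ (cf.\ Remark \ref{rem:second_Monge} and \S\ref{sec Bogashev}).

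The first and main step is a reformulation of the value function: for every $t\in[0,1]$,
\begin{equation*}
v([\mu,\nu]_t)\;=\;\inf_{u\ \mathrm{adapted\ on\ }\mu}\ \int H\big(T_t(x),u(x)\big)\,\mu(dx)\;=:\;\inf_u J_t(u).
\end{equation*}
The inequality ``$\ge$'' is immediate: given an adapted control $\tilde u$ on $[\mu,\nu]_t$, the composition $u:=\tilde u\circ T_t$ is adapted on $\mu$ because $T_t$ is triangular, and $\int H(z,\tilde u(z))\,[\mu,\nu]_t(dz)=J_t(u)$. For ``$\le$'' we take an adapted control $u$ on $\mu$ and set $\tilde u$ to be the adapted control on $[\mu,\nu]_t$ determined by $\tilde u(Y)=E^{\gamma_t}[u(X)\mid \F^{Y}_{N}]$; the reverse causality of $\gamma_t$ gives $E^{\gamma_t}[\,u_s(X_1,\dots,X_s)\mid \F^{Y}_{N}]=E^{\gamma_t}[\,u_s(X_1,\dots,X_s)\mid \F^{Y}_{s}]$, so the $s$-th component of $\tilde u$ is indeed a function of $y_1,\dots,y_s$; then a conditional Jensen inequality (here the convexity of $H$ in its second argument enters, with $Y$ being $\F^{Y}_{N}$-measurable) yields $\int H(z,\tilde u(z))\,[\mu,\nu]_t(dz)\le E^{\gamma_t}[H(Y,u(X))]=J_t(u)$, whence $v([\mu,\nu]_t)\le J_t(u)$ for every such $u$. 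Checking adaptedness and measurability of $\tilde u$ together with this conditional Jensen estimate is the main technical point of the argument.

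Granting the reformulation, concavity is essentially automatic. For fixed adapted $u$ and $\mu$-a.e.\ $x$, the map $t\mapsto H\big((1-t)x+tT(x),u(x)\big)$ is the composition of the affine curve $t\mapsto(1-t)x+tT(x)$ with the concave function $H(\cdot,u(x))$, hence concave in $t$; since $H$ is bounded below, integrating against $\mu(dx)$ preserves concavity, so $t\mapsto J_t(u)$ is concave on $[0,1]$. Consequently $t\mapsto v([\mu,\nu]_t)=\inf_u J_t(u)$ is an infimum of concave functions, hence concave — this is the asserted lexicographic-displacement concavity.

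For the first-order inequality, suppose $v(\mu)$ is attained at an adapted control $u^*$, so $v(\mu)=J_0(u^*)=\int H(x,u^*(x))\,\mu(dx)$. Evaluating the reformulation at $t=1$ (where $T_1=T$) gives $v(\nu)=\inf_u J_1(u)\le J_1(u^*)=\int H\big(T(x),u^*(x)\big)\,\mu(dx)$. For $\mu$-a.e.\ $x$, concavity of $H(\cdot,u^*(x))$ gives, for every supergradient $\xi\in\partial_x H(x,u^*(x))$ (a nonempty set, $H(\cdot,u^*(x))$ being a finite concave function on $\R^N$), the bound $H(T(x),u^*(x))\le H(x,u^*(x))+\xi\cdot(T(x)-x)$; minimizing over $\xi$ and integrating — using a measurable selection so that $x\mapsto\inf_{\xi\in\partial_x H(x,u^*(x))}\xi\cdot(T(x)-x)$ is $\mu$-measurable — yields
\begin{equation*}
v(\nu)\;\le\;v(\mu)+\int \inf_{\xi\in\partial_x H(x,u^*(x))}\xi\cdot[T(x)-x]\,\mu(dx),
\end{equation*}
and taking the infimum over $u^*\in\argmin(v(\mu))$ gives the stated estimate. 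The only delicate points are the bicausal conditional-Jensen step in the reformulation and the routine measurable-selection issue for the superdifferential; everything else is bookkeeping.
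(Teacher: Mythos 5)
Your proposal is correct, and it uses the same basic ingredients as the paper's proof (triangular structure of $T_t$, bicausality, conditional Jensen via convexity of $H$ in the control, supergradients and measurable selection), but it packages them differently and, in my view, more cleanly. The paper verifies the midpoint concavity inequality $v([\mu,\nu]_{at+bs})\ge a\,v([\mu,\nu]_t)+b\,v([\mu,\nu]_s)$ directly, introducing the three interpolated maps $A,B,C$, splitting $H$ by concavity in its first argument, and then reparametrizing the controls --- explicitly inverting the strictly increasing components when $t<1$, and falling back on a conditional-expectation/Jensen projection only in the degenerate case $t=1$ where $T$ need not be injective. You instead isolate the single reformulation $v([\mu,\nu]_t)=\inf_u\int H(T_t(x),u(x))\,\mu(dx)$ over controls adapted to the source, proving ``$\le$'' uniformly in $t$ by projecting $u(X)$ onto $\sigma(Y_1,\dots,Y_s)$ under the bicausal plan $\gamma_t$ and applying conditional Jensen; once this is in place, concavity of $t\mapsto v([\mu,\nu]_t)$ follows at once since an infimum of concave functions is concave, and the first-order estimate is the evaluation at $t=1$ plus the supergradient inequality, exactly as in the paper. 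The only points to watch are the ones you already flag --- a Borel adapted version of the conditional expectations, integrability of $u(X)$ in the Jensen step (an implicit assumption the paper's proof shares), and the measurable selection in the superdifferential, for which the paper cites Rockafellar--Wets and the measurable maximum theorem. Your observation that $\gamma_t$ is bicausal for all $t$ (via an adapted left inverse for $t<1$ and Remark \ref{rem:second_Monge} for $t=1$) is exactly what the paper uses, so there is no gap.
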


The previous result should be seen as a complement to the results of G.\ Pflug and A.\ Pichler \cite{Pflug,PflugPichler}, which give conditions under which  $|v(\nu)- v(\mu)|$ can be gauged by the value of a bicausal problem between $\mu$ and $\nu$. Indeed, Theorem \ref{lexico_concavity} highlights the connection between the most eminent of bicausal maps, i.e.\ the Knothe-Rosenblatt rearrangement, and multistage stochastic programming. On the other hand, the previous result can be related to \cite[Open Problem 5.17]{Villani}, with the caveat that we replaced the role of the Brenier's map by the Knothe-Rosenblatt rearrangement when defining the interpolations. 


\begin{proof}[Proof of Theorem \ref{lexico_concavity}] Take $\mu,\nu,H$ as stated, and $a,b,s,t\in [0,1]$ with $a+b=1$. We will show that $v([\mu,\nu]_{at+bs})\geq av([\mu,\nu]_{t})+bv([\mu,\nu]_{s}) $. We write
\begin{align*}\textstyle
A_n(x)&=(1-t)x_n+tT^n(x_n;x_1,\dots,x_{n-1}) \\
B_n(x)&=(1-s)x_n+sT^n(x_n;x_1,\dots,x_{n-1})\\
C_n(x)&=(1-at-bs)x_n+[at+bs]T^n(x_n;x_1,\dots,x_{n-1}),
\end{align*}
with $x = (x_1,\dots,x_n),$
so we have
\begin{multline*}\textstyle
v([\mu,\nu]_{at+bs})=
\inf_{u_1(),\dots, u_N()} \int H\Bigl(C_1(x),\dots,C_N(x),
u_1\bigl(C_1(x)\bigr),\dots,u_N\bigl(C_1(x),\dots,C_N(x)\bigr)\Bigr)\mu(dx), 
\end{multline*}
and by the concavity assumption,
\begin{multline}\label{ineq:concave val fun}\textstyle
v([\mu,\nu]_{at+bs})\geq\\  \textstyle
a\left\{\inf_{u_1(\dot),\dots, u_N(\cdot)} \int H\Bigl(A_1(x),\dots,A_N(x),
u_1\bigl(C_1(x)\bigr),\dots,u_N\bigl(C_1(x),\dots,C_N(x)\bigr)\Bigr)\mu(dx)\right\} \\ \textstyle
+b \left\{\inf_{u_1(\dot),\dots, u_N(\cdot)} \int H\Bigl(B_1(x),\dots,B_N(x),
u_1\bigl(C_1(x)\bigr),\dots,u_N\bigl(C_1(x),\dots,C_N(x)\bigr)\Bigr)\mu(dx)\right\}. 
\end{multline}
If say $t<1$, we have that $A_1(x) = A_1(x_1)$ is injective (as it is strictly increasing) so defining $\tilde{u}_1(\cdot)=u_1\circ C_1\circ A_1^{-1}(\cdot)$ we have $\tilde{u}_1(A_1(x))=u_1(C_1(x))$. For $n=2$ we introduce $$\textstyle \tilde{u}_2(z_1,z_2)=u_2\bigl(\,\,C_1\circ A_1^{-1}(z_1)\,\, ,\,\, C_2\bigl(A_1^{-1}(z_1), A_2^{-1}(z_2|A_1^{-1}(z_1)) \bigr)\,\,\bigr),$$ which is well-defined due to the function $A_2$ being strictly increasing in its second variable, and verify that $\tilde{u}_2(A_1(x),A_2(x))=u_2(C_1(x),C_2(x))$. Inductively, we obtain easily for each $n\leq N$ a $\tilde{u}_n$ s.t.\ $\tilde{u}_n(A_1(x),\dots,A_n(x))=u_n(C_1(x),\dots,C_n(x))$. Hence the first term on the r.h.s.\ of (\ref{ineq:concave val fun}) is bounded from below by $v([\mu,\nu]_t)$. Using similar arguments for the second term, we see that $v([\mu,\nu]_{at+bs})\geq av([\mu,\nu]_{t})+bv([\mu,\nu]_{s}) $ holds if $s,t<1$. The case $s=1$ or $t=1$ can be obtained by a limiting and a Komlos-Mazur type argument (with convex combinations; here the convexity assumption is used). A direct argument, inspired by \cite{PflugPichler} and relying in convexity too, is as follows:
\begin{multline*}\textstyle \int H\bigl(A_1(x),\dots,A_N(x),
u_1\bigl(C_1(x)\bigr),\dots,u_N\bigl(C_1(x),\dots,C_N(x)\bigr)\bigr)\mu(dx) \geq\\ \textstyle
\EXP^{\mu}\left[ H\Bigl(A_1,\dots,A_N,
\EXP^{\mu}[u_1(C_1)|A_1,\dots,A_N],\dots,\EXP^{\mu}[u_N(C_1,\dots,C_N)|A_1,\dots,A_N]\Bigr)\right ], 
\end{multline*}
by the convexity assumption and Jensen's inequality, so now observing that if e.g.\ $t=1$ then $A=T$, which is bicausal from $\mu$ to $\nu$, we get that $\EXP^{\mu}[u_n(C_1,\dots,C_n)|A_1,\dots,A_N]=\tilde{u}_n(A_1,\dots,A_n)$ for each $n$ and we conclude as before. The case $s=1$ is analogous. 

For the last statement we obtain by the concavity assumption that
$$\textstyle H([1-t]x+tT(x),u^*(x))\leq H(x,u^*(x))+t\xi(x)\cdot[T(x)-x],$$
where $u^*$ is any optimizer for $v(\mu)$, and $\xi(x)$ is any measurable selection of $x\mapsto\partial_x H(x,u^*(x))$ (the partial superdifferential w.r.t.\ the first variable). Such a selection exists by \cite[Theorem 14.56]{RockWetsBook}, for which $H$ must be a normal integrand, but this is true by \cite[Proposition 14.39]{RockWetsBook}. In particular for $t=1$ and integrating we get 
$$\textstyle \int H(T(x),u^*(x))\mu(dx)\leq v(\mu)+ \int \xi(x)\cdot[T(x)-x]\mu(dx) .$$
By the same arguments as before (the convexity assumption, the bicausality of $T$ and $T_*\mu=\nu$), the l.h.s.\ is an upper bound for $v(\nu)$. All in all, if we could find a measurable selector $\xi(\cdot)$ such that 
$$\textstyle \xi(x)\in argmin_{\xi \in \partial_x H(x,u^*(x)) }\left\{\xi\cdot[T(x)-x]\right\},$$
this would finish the proof. This follows from the measurable maximum theorem \cite[Theorem 18.19]{Aliprantis} after observing that $(x,\xi)\mapsto \xi\cdot[T(x)-x] $ is a Carath\'eodory function and that the correspondence $x\mapsto  \partial_x H(x,u^*(x))$ is nonempty compact-valued and weakly measurable (i.e.\ measurable in the sense of \cite[Theorem 14.56]{RockWetsBook}).
\end{proof}

%
%

Let us now discuss the geometric interpretation that lexicographic displacement interpolation should have. We start from the observation that for costs of the type
$$c_p(x,y):=\sum_{i\leq N}|x_i-y_i|^p,$$
%
lexicographic displacement interpolation has ``constant speed'' w.r.t.\ the associated bicausal / nested distances\footnote{That this constitutes an actual distance was proved in the appendix of \cite{PflugPichler}.}, namely:
\begin{equation}\label{eq constant speed}\textstyle
\left(\inf_{\gamma\in\Pi_{bc}(\mu,[\mu,\nu]_t)}\int c_pd\gamma\right)^{1/p}= t \left(\inf_{\gamma\in\Pi_{bc}(\mu,\nu)}\int c_pd\gamma\right)^{1/p},
\end{equation}
whenever $\mu$ has atomless conditional distributions. Indeed, the map $x\mapsto (1-t)x+tT(x)$ by assumption pushes forward $\mu$ into $[\mu,\nu]_t$, is bicausal, and is further an increasing triangular transformation m(See previous section), so it is the Knothe-Rosenblatt rearrangement of $\mu$ into $[\mu,\nu]_t$ and therefore optimal for the l.h.s.\ above. Hence
\small
\begin{align*}\textstyle
\left(\inf_{\gamma\in\Pi_{bc}(\mu,[\mu,\nu]_t)}\int c_pd\gamma\right)^{1/p} &= \textstyle\int\sum_{i\leq N} |x_i - (1-t)x_i - tT^i(x_i;x_1,\dots,x_{i-1})|^p d\mu \\&=\textstyle t^p\int \sum_{i\leq N} |x_i - T^i(x_i;x_1,\dots,x_{i-1})|^p d\mu , 
\end{align*}
which is the r.h.s.\ of \eqref{eq constant speed}. The argument is of course reminiscent to the Brenier case. This suggests that for the case $p=2$ one would want to interpret the corresponding bicausal distance as a geodesic length over the space of probability measures (say absolutely continuous ones) when given a differentiable structure and corresponding metric. This is discussed in \cite[Chapter 8]{Villani} for the classical transport case, and no such thing has yet been accomplished for the present bicausal setting. 
 In a way, a first step in this direction was done by Mikami \cite{Mikami_KR}, where a Hamilton-Jacobi formulation for the dual of the bicausal problem was derived. \\

We finally give the missing proof of Theorem \ref{thm: funct nested}:

\begin{proof}[Proof of Theorem \ref{thm: funct nested}]
By $(EXP)$ and \cite[Theorem 22.10 + (22.16)]{Villani_Old} we have that $\mu^{x_1,\dots,x_{t-1}}$ satisfies the next $T_1$ functional inequality (for every Borel prob.\ measure $m$):
\begin{align}\textstyle
\mathcal{W}_1(\mu^{x_1,\dots,x_{t-1}},m) & \textstyle \leq \frac{\sqrt{2}}{a_t}\left( 1+\log\int e^{a_tx_t^2}\mu^{x_1,\dots,x_{t-1}}(dx_t) \right)^{1/2}\sqrt{Ent(m|\mu^{x_1,\dots,x_{t-1}})}\\
&\leq \textstyle \frac{\sqrt{2(1+\lambda_t)}}{a_t} \sqrt{Ent(m|\mu^{x_1,\dots,x_{t-1}})}.
\notag
\end{align}
Let us denote by $K_{t-1}$ the constant in the r.h.s.\ above. By triangle inequality and $(LIP)$:
\begin{align*}\textstyle
\mathcal{W}_1(\mu^{x_1,\dots,x_{t-1}},\nu^{y_1,\dots,y_{t-1}})& \textstyle \leq  \mathcal{W}_1(\mu^{x_1,\dots,x_{t-1}},\mu^{y_1,\dots,y_{t-1}}) + \mathcal{W}_1(\mu^{y_1,\dots,y_{t-1}},\nu^{y_1,\dots,y_{t-1}}) \\
&\textstyle \leq C \sum_{i<t}|x_i-y_i| +K_{t-1}\sqrt{Ent(\nu^{y_1,\dots,y_{t-1}}|\mu^{y_1,\dots,y_{t-1}})},
\end{align*}
for $\mu\otimes\nu$-a.e.\ $(x_1,\dots,x_{t-1},y_1,\dots,y_{t-1})$; we are entitled to do this since we may assume w.l.o.g.\ that $\nu\ll\mu$. Using \eqref{Dyn-Pbc} and applying the above computation recursively, one arrives at:
$$\textstyle \mathcal{W}_{1,bc}(\mu,\nu)\leq\int \nu(dy_1,\dots,dy_N)\sum\limits_{j<N}K_{N-j-1}(1+C)^j\sqrt{Ent(\nu^{y_1,\dots,y_{N-j-1}}|\mu^{y_1,\dots,y_{N-j-1}})},$$
so using Cauchy-Schwartz for the sum and for the integral we get 
$$\textstyle\mathcal{W}_{1,bc}(\mu,\nu)\leq \sqrt{A}\sqrt{B},$$
where 
$$\textstyle
B= \int \nu(dy_1,\dots,dy_N)\sum\limits_{j<N}Ent(\nu^{y_1,\dots,y_{N-j-1}}|\mu^{y_1,\dots,y_{N-j-1}}),
$$
which by the additive property of the entropy (e.g.\ \cite[Lemma 10.3]{Varadhan}) equals $Ent(\nu|\mu)$, and where
$$\textstyle
A=2\sum\limits_{j<N}(1+C)^{2j} \frac{(1+\lambda_{N-j})}{a^2_{N-j}}
$$ 
\vspace{-\belowdisplayskip}\[\]
\end{proof}

\section{Counterexamples}\label{Sec Examples}

Previously we have discussed that the values of the causal and bicausal problems coincide in the case when the starting measure $\mu$ is the product of its marginals and the cost function has a separable structure. The following two examples show that dropping either assumption causes this equality to break down.

\begin{example}
Here $\mu$ is the product of its marginals, but the cost function  $c(x_1,x_2,y_1,y_2) = \IND_{(x_1,x_2)\neq (y_1,y_2)}$ is non-separable. Take
$$ \textstyle \mu = 0.16\, \delta_{(1,1)} +  0.24\, \delta_{(1,-1)} + 0.24\, \delta_{(-1,1)} + 0.36\, \delta_{(-1,-1)}, $$
$$ \nu = 0.25\, \delta_{(1,1)} +  0.25\, \delta_{(1,-1)} + 0.25 \, \delta_{(-1,1)} + 0.25\, \delta_{(-1,-1)}. $$
Then an optimal causal transport plan is
\begin{align*}\textstyle
\gamma_{c} & = 0.16\, \gamma{((1,1);(1,1) )} \,\,+ 0.24 \, \gamma{((1,-1);(1,-1))}  +  0.03 \, \gamma{((-1,1);(1,1))}   \\
& + 0.01 \, \gamma{((-1,1);(1,-1))} \,\,+ 0.2 \,\,\,\, \gamma{((-1,1);(-1,1))} \, + 0.06 \, \gamma{((-1,-1);(1,1))} \\ &+ 0.05 \,\,\, \gamma{((-1,-1);(-1,1))} + 0.25 \, \gamma{((-1,-1);(-1,-1))},
\end{align*} 
giving the optimal causal value $0.15$, and an optimal bicausal one is
\begin{align*}\textstyle
\gamma_{bc} & = 0.16\, \gamma{((1,1);(1,1))} + 0.04 \, \gamma{((1,-1);(1,1))}  +  0.2 \, \gamma{((1,-1);(1,-1))}  \\
& + 0.04 \, \gamma{((-1,1);(1,1))}+ 0.2 \,\,\,\, \gamma{((-1,1);(-1,1))} + 0.01 \, \gamma{((-1,-1);(1,1))} \\
&  + 0.05 \, \gamma{((-1,-1);(1,-1))}+ 0.05 \, \gamma{((-1,-1);(-1,1))}+ 0.25 \, \gamma{((-1,-1);(-1,-1))},
\end{align*} 
giving the value $0.19$.
{The optimal dual variables that correspond to reverse causality constraints are nonzero in this example, conceptually supporting the gap in the optimal values.
}
\end{example}

\begin{example}\label{counerexample}  Consider a quadratic-separable cost function and define $\mu$, failing to be the product of its marginals, and $\nu$ as:
\begin{align*}\textstyle
\mu &= 0.18\, \delta_{(1,2)} +  0.24\, \delta_{(1,0)} + 0.18 \, \delta_{(1,-2)} + 0.08\, \delta_{(-1,2)} + 0.12\, \delta_{(-1,0)} + 0.2\, \delta_{(-1,-2)}, \\
\nu &= 0.1\, \delta_{(1,2)} +  0.26\, \delta_{(1,-2)} + 0.16 \, \delta_{(-1,2)} + 0.48\, \delta_{(-1,-2)}.
\end{align*} 
An optimal causal plan is
\begin{align*}\textstyle
\gamma_{c} & = 0.144\, \gamma{((1,0);(1,-2))} \,\,\,  + 0.008 \, \gamma{((1,0);(-1,2))} \, +  0.088 \, \gamma{((1,0);(-1,-2))}  \\
&+ 0.1 \, \gamma{((1,2);(1,2))} + 0.008 \, \gamma{((1,2);(1,-2))}\,\,\,\, + 0.072 \, \gamma{((1,2);(-1,2))}\,  \\
& + 0.108 \, \gamma{((1,-2);(1,-2))} + 0.072 \, \gamma{((1,-2);(-1,-2))}+ 0.12 \,\,\, \gamma{((-1,0);(-1,-2))} \\ & + 0.08 \, \gamma{((-1,2);(-1,2))} + 0.2 \, \gamma{((-1,-2);(-1,-2))} ,
\end{align*} 
giving the value $2.528$, and a bicausal optimal one
\begin{align*}\textstyle
\gamma_{bc} & = 0.144\, \gamma{((1,0);(1,-2))}   + 0.096 \, \gamma{((1,0);(-1,-2))}  +  0.1 \, \gamma{((1,2);(1,2))}\\
& + 0.008 \, \gamma{((1,2);(1,-2))} + 0.06 \, \gamma{((1,2);(-1,2))}\,\,\, + 0.012 \, \gamma{((1,2);(-1,-2))} \\
&  + 0.108 \, \gamma{((1,-2);(1,-2))} + 0.072 \, \gamma{((1,-2);(-1,-2))}+ 0.02 \, \gamma{((-1,0);(-1,2))}  \\ &  + 0.1 \, \gamma{((-1,0);(-1,-2))}+ 0.08 \, \gamma{((-1,2);(-1,2))}+ 0.2 \, \gamma{((-1,-2);(-1,-2))},
\end{align*} 
with value $2.72$.
\end{example}
The previous example also shows us that Condition \ref{Ruesch+indep cond} and so the monotone regression condition in \cite[Corollary 2]{Rueschendorf}, which holds here, is insufficient to guarantee the equality between (\ref{Pc}) and (\ref{Pbc}) even for separable costs. 
Finally, we present an example showing that there may easily be no causal Monge maps even if classical Monge maps do exist. 
\begin{example} In the case, when 
$$ \textstyle\mu = a_1 \, \delta_{(1,2)} +  a_2\, \delta_{(1,0)} + a_3 \, \delta_{(-1,0)} + a_4\, \delta_{(-1,-2)}, $$
$$ \nu = a_1\, \delta_{(1,2)} +  a_3 \, \delta_{(1,0)} + a_2 \, \delta_{(-1,0)} + a_4\, \delta_{(-1,-2)}, $$
where $a_i, i=1, \dots,4$ are positive numbers that sum up to one and $a_1 \neq a_4$, one can easily observe that there is no causal Monge map pushing forward the former into the latter; i.e.\ the mass must split. On the other hand, a non-causal Monge map is given by:
\begin{align*}\textstyle
T(1,2)=(1,2)\,\,,\,\, T(-1,-2)=(-1,-2)\,\,,\,\, T(1,0)=(-1,0)\,\,,\,\,T(-1,0)=(1,0)\\
\gamma^{T}  = a_1\, \gamma{((1,2);(1,2))}   + a_2 \, \gamma{((1,0);(-1,0))}  +  a_3 \, \gamma{((-1,0);(1,0))} + a_4\, \gamma{((-1,-2);(-1,-2))}.
\end{align*} 
\end{example}

\section*{Acknowledgments} We thank B.\ Acciaio, J.\ Fontbona, R.\ Lassalle, W.\ Schachermayer and J.\ Yang for valuable discussions.

\bibliographystyle{amsalpha}
\bibliography{biblio_causal_transport}

\end{document}